    \newcommand{\argmin}{\mathop{\rm argmin}}
    \newcommand{\wh}{\widehat}
    \newcommand{\wt}{\widetilde}
\newtheorem{theorem}{Theorem}[section]
\newtheorem{lemma}{Lemma}[section]
\newtheorem{proposition}{Proposition}[section]
\newtheorem{definition}{Definition}[section]
\newtheorem{corollary}{Corollary}[section]
\numberwithin{equation}{section}
\newtheorem{remark}{Remark}[section]
\newenvironment{proof}[1][\sc Proof.]{\begin{trivlist}
\item[\hskip \labelsep {\bfseries #1}]}{\end{trivlist}}
\newcommand{\qed}{\nobreak \ifvmode \relax \else
      \ifdim\lastskip<1.5em \hskip-\lastskip
      \hskip1.5em plus0em minus0.5em \fi \nobreak
      \vrule height0.75em width0.5em depth0.25em\fi}
\begin{document}
\begin{frontmatter}
\title{Asymptotic Confidence Regions for Density Ridges}
\runtitle{Ridge Confidence Regions}
\author{\fnms{Wanli} \snm{Qiao}\ead[label=e1]{wqiao@gmu.edu}\thanksref{t2}}
  \address{ Department of Statistics, George Mason University, 4400 University Drive, MS 4A7, Fairfax, VA 22030, USA.
           \printead{e1}}
%
%
%
 \thankstext{t2}{This work is partially supported by NSF grant DMS 1821154.} 
\begin{abstract} \noindent 
We develop large sample theory including nonparametric confidence regions for $r$-dimensional ridges of probability density functions on $\mathbb{R}^d$, where $1\leq r<d$. We view ridges as the intersections of level sets of some special functions. The vertical variation of the plug-in kernel estimators for these functions constrained on the ridges is used as the measure of maximal deviation for ridge estimation. Our confidence regions for the ridges are determined by the asymptotic distribution of this maximal deviation, which is established by utilizing the extreme value distribution of nonstationary $\chi$-fields indexed by manifolds.
\end{abstract}
%
%

\begin{keyword}
Ridges, intersections, level sets, extreme value distribution, kernel density estimation
\end{keyword}

\end{frontmatter}

\section{Introduction}

A ridge in a data cloud is a low-dimensional geometric feature that generalizes the concept of local modes, in the sense that ridge points are local maxima constrained in some subspace. In the literature ridges are also called filaments, or filamentary structures, which usually exhibit a network-like pattern. They are widely used to model objects such as fingerprints, fault lines, road systems, and blood vessel networks. The vast amount of modern cosmological data displays a spatial structure called Cosmic Web, and ridges have been used as a mathematical model for galaxy filaments (Sousbie et al. 2008).

The statistical study on ridge estimation has recently attracted much attention. See Genovese et al. (2009, 2012, 2014), Chen et al. (2015), and Qiao and Polonik (2016). One of the fundamental notions under ridge estimation is that ridges are {\em sets}, and most of the above statistical inference work focuses on the maximal (or global) deviation in ridge estimation, that is, how the estimated ridge captures the ground truth as a whole. This requires an appropriately chosen measure of global deviation. For example, the Hausdorff distance is used in Genovese et al. (2009, 2012, 2014) and Chen et al. (2015), while Qiao and Polonik (2016) use the supremum of ``trajectory-wise" Euclidean distance between the true and estimated ridge points, where trajectories are driven by the second eigenvectors of Hessian. Both distances measure the deviation of ridge estimation in the space where the sets live in, which we call the horizontal variation (HV).  

In this manuscript we develop large sample theory for the nonparametric estimation of density ridges, which in particular includes the construction of confidence regions for density ridges. Our methodology is based on the measure of global deviation in ridge estimation from a different perspective. Briefly speaking, we treat ridges as intersections of special level sets, and use the measure of maximal deviation in {\em levels}, which we call vertical variation (VV). 

We give the mathematical definition of ridges. Let $\nabla f(x)$ and $\nabla^2 f(x)$ be the gradient and Hessian of a twice differentiable probability density function $f$ at $x\in\mathbb{R}^{d}$ with $d \geq 2$. Let $v_1(x),\cdots,v_d(x)$ be unit eigenvectors of $\nabla^2 f(x)$, with corresponding eigenvalues $\lambda_1(x)\geq\lambda_2(x)\geq \cdots \geq \lambda_{d}(x)$. For $r=1,2,\cdots,d-1$, write $V(x) = (v_{r+1}(x),\cdots, v_{d}(x)).$ The $r$-ridge $\mathcal{M}^r$ induced by $f$ is defined as the collection of points $x$ that satisfies the following two conditions: 
\begin{align}
&V(x)^T\nabla f(x) = 0,\label{condition1}\\
& \lambda_{r+1}(x) < 0.\label{condition2}
\end{align}
We fix $r\geq 1$ in this manuscript and denote the ridge by $\mathcal{M}$. This definition has been widely used in the literature (e.g., Eberly, 1996). A ridge point $x$ is a local maximum of $f$ in a $(d-r)$-dimensional subspace spanned by $v_{r+1}(x),\cdots, v_{d}(x)$. This geometric interpretation can be seen from the fact that $v_i^T\nabla f$ and $\lambda_i$ are the first and second order directional derivatives of $f$ along $v_i$, respectively. In fact, if we take $r=0$, then conditions (\ref{condition1}) and (\ref{condition2}) just define the set of local maxima, which is the 0-ridge. Condition (\ref{condition1}) indicates that an $r$-ridge is contained in the intersection of $(d-r)$ level sets of the functions $v_i^T\nabla f$, $i=r+1,\cdots,d$, and is an $r$-dimensional manifold with co-dimension $(d-r)$ under some mild assumptions (e.g. see assumption ({\bf F2}) below). 
%

Given a sample $X_1,\cdots,X_n$ of $f$, the ridge $\mathcal{M}$ can be estimated using a plug-in approach based on kernel density estimators (KDE). Let $\wh f \equiv \wh f_{n,h}$ be the KDE of $f$ with bandwidth $h>0$ (see (\ref{kde})), and let $\wh v_1(x),\cdots,\wh v_d(x)$ be unit eigenvectors of $\nabla^2 \wh f(x)$, with corresponding eigenvalues $\wh\lambda_1(x)\geq\wh\lambda_2(x)\geq \cdots \geq \wh\lambda_{d}(x)$. Also write $\wh V(x) = (\wh v_{r+1}(x),\cdots, \wh v_{d}(x))$. Then a plug-in estimator for $\mathcal{M}$ is $\wh{\mathcal{M}}$, which is the set defined by plugging in these kernel estimators into their counterparts in conditions (\ref{condition1}) and (\ref{condition2}). See Figure~\ref{illustration} for example.
%
%
%
Genovese et al. (2009, 2012, 2014) and Chen et al. (2015) focus on the estimation of ridges induced by the smoothed kernel density function $f_h\equiv\mathbb{E}\wh f$, instead of the true density $f$. Such ridges, denoted by $\mathcal{M}_h$, depend on the bandwidth $h$ and are called surrogates. Focusing on $\mathcal{M}_h$ instead of $\mathcal{M}$ avoids the well-known bias issue in nonparametric function and set estimation. 

\begin{figure}[ht]
\begin{center}
\includegraphics[height=6cm]{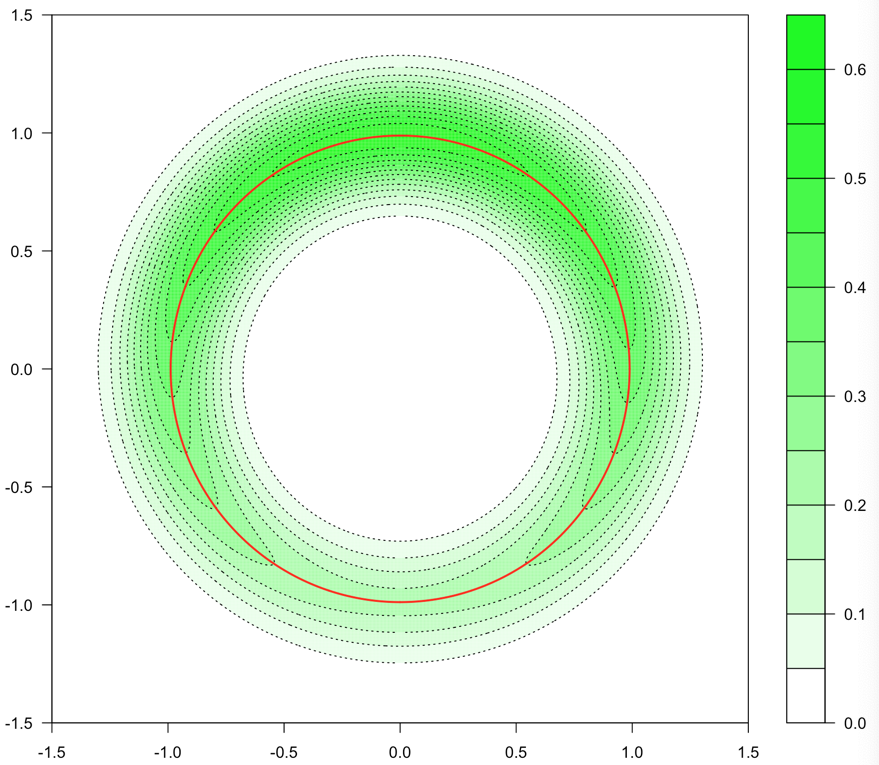}
\includegraphics[height=6cm]{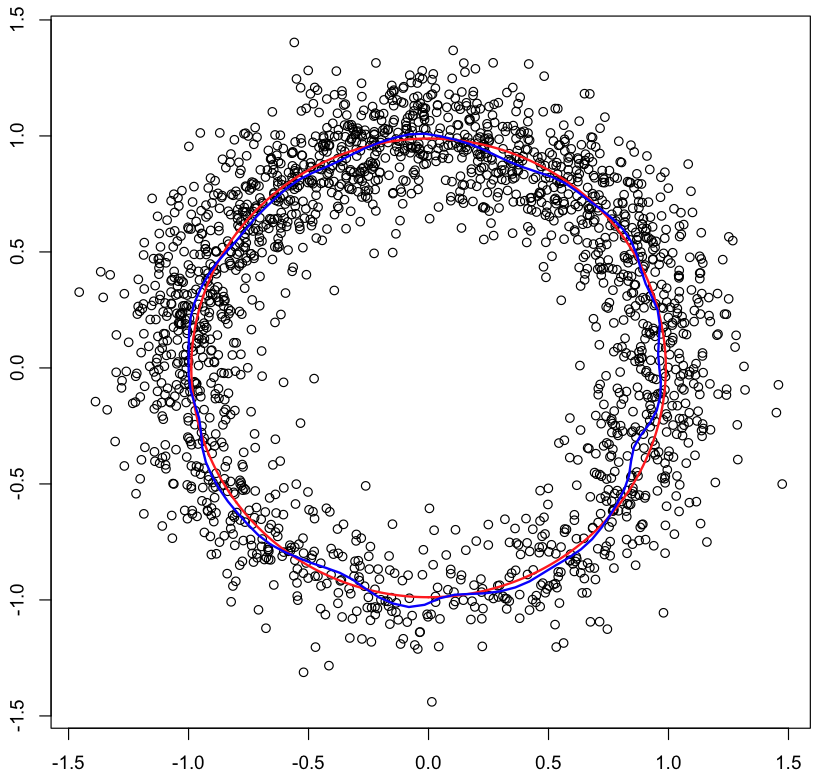}
\caption{Left: contour plot of a density function, where the red curve is a ridge and the dotted lines are contour lines; Right: simulated data points from the density function and the estimated ridge (blue curve).}
\label{illustration}
\end{center}
\end{figure}

In this manuscript we consider confidence regions for both $\mathcal{M}$ and $\mathcal{M}_h$ in the form of
\begin{align}\label{confregform}
\wh C_{n,h}(a_n,b_n) = \left\{x:  \sqrt{nh^{d+4}}\left\| Q_n(x) \wh V(x)^T \nabla \wh f(x) \right\| \leq a_n, \text{and}\; \wh\lambda_{r+1}(x) < b_n \right\},
\end{align}
where $a_n>0$, $b_n\in \mathbb{R}$ and $Q_n(x)$ is a normalizing matrix. Here determining $Q_n$, $a_n$ and $b_n$ is critical to guarantee that $\wh C_{n,h}(a_n,b_n)$ has a desired asymptotic coverage probability for $\mathcal{M}$ or $\mathcal{M}_h$ as $n\rightarrow\infty$ and $h\rightarrow0$. The basic idea for our VV approach is as follows. We consider density ridges as the intersection of the zero-level sets of the functions $V^T\nabla f$ and a sublevel set of $\lambda_{r+1}$. When we use plug-in estimators for these functions, we allow their values to vary in a range (specified by $a_n$ and $b_n$), which implicitly defines a neighborhood near $\wh{\mathcal{M}}$. The shape of this neighborhood is envisioned as a tube around $\wh{\mathcal{M}}$ with varying local radii. This tube is geometrically different from the one with constant radius based on the asymptotic distribution of $d_H(\wh{\mathcal{M}},\mathcal{M}_h)$, which is the Hausdorff distance (belonging to HV) between $\wh{\mathcal{M}}$ and $\mathcal{M}_h$. As seen from its definition given in (\ref{condition1}) and (\ref{condition2}), ridge estimation mainly involves the estimation of the density gradient and Hessian. Between these two major components, the rate $\sqrt{nh^{d+4}}$ in (\ref{confregform}) follows from the rate of convergence of the Hessian, which is $1/\sqrt{nh^{d+4}}$ (ignoring the bias). Note that the rate of convergence of the gradient is $1/\sqrt{nh^{d+2}}$, which is much faster than that of the Hessian, and makes the Hessian estimation a dominant component in ridge estimation. We note in passing that this statement also applies to the asymptotic properties of $d_H(\wh{\mathcal{M}},\mathcal{M}_h)$ (see Chen et al., 2015).  

The asymptotic validity of the confidence regions for $\mathcal{M}_h$ and $\mathcal{M}$ in the form of (\ref{confregform}) can be verified in the following steps, which are also the main results we will show in the manuscript. First note that if we write $B_n(x)=\| Q_n(x) \wh V(x)^T \nabla \wh f(x) \|$, then $\mathcal{M}_h \subset \wh C_{n,h}(a_n,b_n)$ is equivalent to $\sqrt{nh^{d+4}}\sup_{x\in\mathcal{M}_h}  B_n(x) \leq a_n $ and $\sup_{x\in\mathcal{M}_h}  \wh\lambda_{r+1}(x) < b_n$. Under some regularity assumptions one can show that

(i) the distribution of $\sqrt{nh^{d+4}}\sup_{x\in\mathcal{M}_h}  B_n(x)$ equals that of $\sup_{g\in\mathcal{F}_h}\mathbb{G}_n(g)$ asymptotically, where $\mathbb{G}_n$ is an empirical process and $\mathcal{F}_h$ is a class of functions, which is induced by linear functionals of second derivatives of kernel density estimation;

(ii) the distribution of $\sup_{g\in\mathcal{F}_h}\mathbb{G}_n(g)$ is asymptotically the same as that of $\sup_{g\in\mathcal{F}_h}\mathbb{B}(g)$, where $\mathbb{B}$ is a locally stationary Gaussian process indexed by $\mathcal{F}_h$;

(iii) the distribution of $\sup_{g\in\mathcal{F}_h}\mathbb{B}(g)$ is derived by applying the extreme value theory of $\chi$-fields indexed by manifolds developed in our companion work Qiao (2019b). 

Then $a_n$ is determined by the above approximations and distributional results and $b_n$ is chosen such that $\sup_{x\in\mathcal{M}_h}  \wh\lambda_{r+1}(x) < b_n$ holds with a probability tending to one.  In fact one can show that $P(\mathcal{M}_h \subset \wh C_{n,h}(a_n,b_n))=e^{-e^{-z}}+o(1)$ with $b_n=0$ and $a_n=\Large[\frac{z+c}{\sqrt{2r\log{(h^{-1})}}} +  \sqrt{2r\log(h^{-1})} \Large]$, for some $c>0$ depending on $f$, $K$, and $\mathcal{M}_h$. This type of result is similar to the confidence bands for univariate probability density functions developed in the classical work of Bickel and Rosenblatt (1973). The derivation for $\mathcal{M}$ is similar except that we have to deal with the bias in the estimation.

The way that we study ridge estimation is naturally connected to the literature of level set estimation (e.g. Hartigan, 1987; Polonik, 1995; Tsybakov, 1997; Polonik and Wang, 2005; Cadre, 2006; Mason and Polonik, 2009), which mainly focuses on density functions and regression functions. Confidence regions for level sets have been studied in Mammen and Polonik (2013), Chen et al. (2017), and Qiao and Polonik (2019). It is clear that technically a ridge is a more sophisticated object to study than a density or regression level set, not only because the former involves the estimation of eigen-decomposition of Hessian and its interplay with gradient, but also a ridge is viewed as the {\em intersection} of level sets of multiple functions if $d-r\geq2$. To our knowledge there are no nonparametric distributional results for the estimation of intersections of density or regression level sets in the literature. 
In addition to the papers mentioned above, previous work on ridge estimation also includes Hall et al. (1992), Wegman et al. (1993), Wegman and Luo (2002), Cheng et al. (2004), Arias-Castro et al. (2006), Ozertem and Erdogmus (2011), Genovese et al. (2017), and Li and Ghosal (2019).  

The manuscript is organized as follows. We first introduce our notation and assumptions in Section~\ref{notationassump}. In Section~\ref{mhconfden} we develop the asymptotic confidence regions for $\mathcal{M}_h$ following the procedures listed above. Specifically, steps (i)-(iii) are established in Proposition~\ref{ridgenessest}, and Theorems~\ref{gassianapprox}, and \ref{confidenceregion}, respectively. In Section~\ref{biascorrect}, we use bias correction methods to extend the results to asymptotic confidence regions for $\mathcal{M}$. The confidence regions involve unknown surface integrals on ridges. In Section~\ref{unknownest} we show the asymptotic validity of the confidence regions with these unknown quantities replaced by their plug-in estimators. For technical reasons, the consideration of critical points on ridges are deferred until Section~\ref{Generalization}, where we also discuss different choices of $b_n$. The proofs are given in Section~\ref{proofssec} and the supplementary material.

\section{Notation and assumptions}\label{notationassump}

We first give the notion used in the manuscript. For a real matrix $A$ and compatible vectors $u$, $v$, denote $\langle u,v\rangle_A = u^TAv$. Also we write $\langle u,u\rangle_A=\|u\|_A^2$ and $\|u\|$ is the Euclidian norm. Let $\|A\|_F$ and $\|A\|_{\max}$ be the Frobenius and max norms, respectively. Let $A^+$ be the Moore-Penrose pseudoinverse of $A$ (see page 36, Magnus and Neudecker, 2007), which always exists and is unique. For a positive integer $m$, let $\mathbf{I}_m$ be the $m\times m$ identity matrix. For a vector field $W: \mathbb{R}^m\mapsto \mathbb{R}^n$ let $\mathbf{R}(W)$ denote the matrix given by $\mathbf{R}(W) := \int_{\mathbb{R}^m}
W(x)W(x)^T dx \subset \mathbb{R}^{n\times n}$, assuming the integral is well defined. For a symmetric $d\times d$ matrix $A$, $\text{vec} (A)$ vectorizes $A$ by stacking the columns of $A$ into a $d^2\times 1$ column vector, while $\text{vech} (A)$ only vectorize the lower triangular part of $A$ into a $d(d+1)/2\times 1$ column vector. The duplication matrix is such that $\text{vec} (A) = D \;\text{vech} (A)$. The matrix $D$ does not depend on $A$ and is unique for dimension $d$ (and we have suppressed $d$ in the notation). For example, when $d=2$ and $A=\begin{pmatrix} 
a_{11} & a_{12} \\
a_{12} & a_{22} 
\end{pmatrix}$, using the above notation we have
\begin{align*}
\text{vech} (A) = (a_{11},a_{12},a_{22})^T,\;\; \text{vec} (A) = (a_{11},a_{12},a_{12},a_{22})^T,\;\; \text{and} \;\; D= \begin{pmatrix} 
1 & 0 & 0 & 0 \\
0 & 1 & 1 & 0\\
0 & 0 & 0 & 1
\end{pmatrix}^T.
\end{align*}

For a smooth function $K:\mathbb{R}^d\mapsto\mathbb{R}$, let $\nabla K$ and $\nabla^2 K$ be its gradient and Hessian, respectively, and we denote $d^2 K =\text{vech}\nabla^2 K$. Let $\mathbb{Z}_+$ be the set of non-negative integers. For $m\in\mathbb{Z}_+$, we use $\mathscr{H}_m$ to denote the $m$-dimensional Hausdorff measure. Let $\mathscr{B}(x,t)=\{y\in\mathbb{R}^d:\; \|y-x\|\leq t\}$ be the ball centered at $x$ with radius $t>0$. For a set $M\subset\mathbb{R}^d$ and $\epsilon>0$, let $M\oplus\epsilon= \cup_{x\in M}\mathscr{B}(x,\epsilon)$, which is the $\epsilon$-enlarged set of $M$.  For $m\in\mathbb{Z}_+$, let $\mathbb{S}^m=\{x\in\mathbb{R}^{m+1}:\; \|x\|=1\}$ be the unit $m$-sphere. For any subset $\mathcal{A}\subset\mathbb{R}^d$, let $\mathbf{1}_{\mathcal{A}}$ be the indicator function of $\mathcal{A}$.

Given an i.i.d. sample $X_1,\cdots X_n$ from the probability density function $f$ on $\mathbb{R}^d$, denote the kernel density estimator 
\begin{align}\label{kde}
\wh f(x) = \wh f_{n,h}(x)= \frac{1}{nh^d} \sum_{i=1}^n K\left( \frac{x-X_i}{h}\right),\; x\in\mathbb{R}^d,
\end{align}
where $h>0$ is a bandwidth and $K$ is a twice differentiable kernel density function on $\mathbb{R}^d$. The notation $h$ is used as a default bandwidth unless otherwise indicated, and we suppress the subscripts $n,h$ in the kernel density estimator and all quantities induced by it (so that $\wh V=\wh V_{n,h}$ and $\wh\lambda_{r+1}=\wh\lambda_{r+1,n,h}$ for example). Let $f_h(x) = \mathbb{E}\wh f(x)$ and let $v_{1,h}(x),\cdots,v_{d,h}(x)$ be unit eigenvectors of $\nabla^2 f_h(x)$, with corresponding eigenvalues $\lambda_{1,h}(x)\geq\lambda_{2,h}(x)\geq \cdots \geq \lambda_{d,h}(x)$. Also write $V_h(x) = (v_{r+1,h}(x),\cdots, v_{d,h}(x))$. We focus on ridge estimation on a compact subset $\mathcal{H}$ of $\mathbb{R}^d$, which is assumed to be known. For simplicity, suppose that $\mathcal{H}$ is the hypercube $[0,1]^d$, and all the ridge definitions $\mathcal{M}$, $\wh{\mathcal{M}}$ and $\mathcal{M}_h$ are restricted on $\mathcal{H}$, such as $\mathcal{M}_h=\{x\in\mathcal{H}: V_h(x)^T\nabla f_h(x)=0,\; \lambda_{r+1,h}(x)<0\}$.
%

For $\gamma=(\gamma_1,\cdots,\gamma_d)^T\in\mathbb{Z}_+^d$, let $|\gamma| = \gamma_1+\cdots+\gamma_d$. For a function $g:\mathbb{R}^d\mapsto\mathbb{R}$ with $|\gamma|$th partial derivatives, define 
\begin{align}\label{multiindex}
g^{(\gamma)}(x) = \frac{\partial^{|\gamma|} }{\partial^{\gamma_1}x_1\cdots \partial^{\gamma_d}x_d} g(x),\; x\in\mathbb{R}^d.
\end{align}
Let $\rho_1=(3,0,\cdots,0)^T\in\mathbb{Z}_+^d$ and $\rho_2=(2,1,0,\cdots,0)^T\in\mathbb{Z}_+^d$. Define $a_K= \frac{\int_{\mathbb{R}^{d}}[K^{(\rho_1)}(s)]^2ds}{\int_{\mathbb{R}^{d}}[K^{(\rho_2)}(s)]^2ds}.$ If $d\geq 3$, let $\rho_3=(1,1,1,0,\cdots,0)^T\in\mathbb{Z}_+^d$ and define $b_K= \frac{\int_{\mathbb{R}^{d}}[K^{(\rho_3)}(s)]^2ds}{\int_{\mathbb{R}^{d}}[K^{(\rho_2)}(s)]^2ds}.$ Let $\mathbf{R}:=\mathbf{R}(d^2K)$. 
%
For $\delta>0$, define 
\begin{align}\label{neighborhood}
\mathcal{N}_{\delta}(\mathcal{M}) = \{x\in\mathcal{H}: \|\nabla f(x)^T V(x)\|\leq \delta,\; \lambda_{r+1}(x)<0\},
\end{align}
which is a small neighborhood of $\mathcal{M}$ when $\delta$ is small. For a bandwidth $h>0$, let $\gamma_{n,h}^{(k)} = \sqrt{\frac{\log{n}}{nh^{d+2k}}},$ which is the rate of convergence of $\sup_{x\in\mathbb{R}^d}|\wh f^{(\gamma)}(x)-f_h^{(\gamma)}(x)|$ for $|\gamma|=k\in\mathbb{Z}_+$ under standard assumptions. We use the following assumptions in the construction of confidence regions for ridges.\\[-5pt]

\hspace{-10pt}{\bf Assumptions}: \\[3pt]
({\bf F1}) $f$ is four times continuously differentiable on $\mathcal{H}$.\\[3pt]
({\bf F2}) There exists $\delta_0>0$ such that $\mathcal{N}_{\delta_0}(\mathcal{M})\subset \mathcal{H}$ and the following is satisfied. When $d-r=1$, we require that $\|\nabla(\nabla f(x)^T v_d(x))\|>0$ for all $x\in\mathcal{N}_{\delta_0}(\mathcal{M})$; When $d-r\geq 2$, we require that $\nabla(\nabla f(x)^T v_i(x))$, $i=r+1,\cdots,d$ are linearly independent for all $x\in\mathcal{N}_{\delta_0}(\mathcal{M})$.\\[3pt]
({\bf F3}) $\{x\in\mathcal{H}:\; \lambda_{r+1}(x)=0,\; V(x)^T\nabla f(x)=0\}=\emptyset$.\\[3pt]
({\bf F4})  For $x\in\mathcal{N}_{\delta_0}(\mathcal{M})$, the smallest $d-r$ eigenvalues of $\nabla^2 f(x)$ are simple, i.e., $\lambda_{r}(x) > \lambda_{r+1}(x)>\cdots>\lambda_d(x)$. In particular, $\lambda_{r}(x) > \lambda_{r+1}(x)$ for $x\in\mathcal{H}$.\\[3pt]
({\bf K1}) The kernel function $K$ is a spherically symmetric probability density function on $\mathbb{R}^d$ with $\mathscr{B}(0,1)$ as its support. It has continuous partial derivatives up to order 4. \\ [3pt]
({\bf K2}) For any open ball $\mathcal{S}$ with positive radius contained in $\mathscr{B}(0,1)$, the component functions of $\mathbf{1}_{\mathcal{S}}(s)d^2 K(s)$ are linearly independent. \\[3pt]
({\bf K3}) We require $a_K>1$ if $d=2$, or $a_Kb_K>1$ if $d\geq3$.\\

%
\begin{remark}\label{discuss} $\;$\\[-10pt]

{\em
(i) Note that ridges are defined using the second derivatives of densities. Assumption ({\bf F1}) requires the existence of two additional orders of derivatives. This is similar to other work on the distributional results of ridge estimation (see Chen et al., 2015 and Qiao and Polonik, 2016). 

(ii) Assumption ({\bf F2}) guarantees that the ridge has no flat parts, which is comparable to the margin assumption in the literature of level set estimation (Polonik, 1995). In addition, as we consider ridges as intersections of level sets when $d-r\geq 2$, this assumption guarantees the transversality of the intersecting manifolds. Assumption ({\bf F2}) holds, e.g. if $f$ satisfies assumptions (A1) and (P1) in Chen et al. (2015) (see their Lemma 2). 

(iii) Assumptions ({\bf F3})-({\bf F4}) exclude some scenarios that are on the boundary of the class of density functions we consider (note that these assumptions only exclude some equalities). Here we give some brief discussion of the implications of these assumptions.\\[-17pt] 
\begin{itemize}
\item[a)] Assumption ({\bf F3}) avoids the existence of some degenerate ridge points. Such points have zero first and second directional derivatives along $v_{r+1}$ and so they are almost like ridge points. This assumption has been used in Genovese et al. (2014), Chen et al. (2015) and Qiao and Polonik (2016).
\item[b)] Assumption ({\bf F4}) requires that the smallest $d-r$ eigenvalues of $\nabla^2 f(x)$ for $x\in\mathcal{M}$ all have multiplicity one, for the following technical consideration. When an eigenvalue is repeated, the corresponding eigenvectors might have discontinuity with respect to a small perturbation of the Hessian matrix (e.g., $\nabla^2 \wh f(x)-\nabla^2 f(x)$). 
\end{itemize}

(iv) Assumptions ({\bf K1})-({\bf K3}) are for the kernel function $K$. In particular ({\bf K2}) can guarantee that ${\mathbf{R}}$ is positive definite. In general one can show that $a_K\geq1$ and $b_K\leq1$ (see Lemma~\ref{kernelratio}). So ({\bf K3}) excludes some cases on the boundary of the class of kernel functions we consider. Some properties of the kernel functions can be found in Lemma~\ref{kernelratio}. One can show that the following kernel density function is an example that satisfies ({\bf K1})-({\bf K3}):
\begin{align*}
K(x) = c_{d}(1-\|x\|^2)^5 \mathbf{1}_{\mathscr{B}(0,1)}(x),\; x\in\mathbb{R}^d,
\end{align*}
where $c_d$ is a normalizing constant.
}
\end{remark}

\section{Main Results}

In the literature, the following assumption or even stronger ones are imposed to get distributional results for density ridge estimation. See assumption (P2) of Chen et al. (2015) and assumption (F7) of Qiao and Polonik (2016).\\[3pt]
\leavevmode{\parindent=1em\indent} ({\bf F5}) $\|\nabla f(x)\|\neq 0$, for all $x\in\mathcal{M}$.\\[3pt]
In other words, it is assumed that $\mathcal{M}$ does not contain any critical points of $f$. This assumption excludes many important scenarios in practice because ({\bf F5}) implies that $f$ does not have local modes on $\mathcal{H}$.

Our confidence regions for $\mathcal{M}_h$ and $\mathcal{M}$ eventually do not require assumption ({\bf F5}). But the critical points and regular points on the ridges need to be treated in different ways, because for critical points the estimation is mainly determined by the gradient of $f$, while the estimation of regular ridge points depends on both the gradient and Hessian. It is known that the estimation of Hessian has a slower rate of convergence than the critical points using kernel type estimators, so the estimation behaves differently on regular ridge and critical points. To deal with this issue, the strategy we use is to construct confidence regions for the set of critical points and regular ridge points individually and then combine them (see Section~\ref{Generalization}). For convenience we will first exclude critical points from our consideration and tentatively assume ({\bf F5}).

%
%

\subsection{Asymptotic confidence regions for $\mathcal{M}_h$}\label{mhconfden}

Given any $0<\alpha<1$, we first study how to determine $a_n$ and $b_n$ to make $\wh C_{n,h}(a_n,b_n)$ an asymptotic $100(1-\alpha)\%$ confidence region for $\mathcal{M}_h$. The following lemma shows some basic properties of $\mathcal{M}$ as well as $\mathcal{M}_h$. For any subset $\mathcal{L}\subset\mathbb{R}^d$ and $x\in\mathbb{R}^d$, let $d(x,\mathcal{L})=\inf_{y\in\mathcal{L}}\|x-y\|$. A point $u\in\mathcal{L}$ is called a normal projection of $x$ onto $\mathcal{L}$ if $\|x-u\| = d(x,\mathcal{L})$. For $x\in\mathcal{L}$, let $\Delta(\mathcal{L},x)$ denote the {\em reach} of ${\cal L}$ at $x$ (Federer, 1959), which is the largest $r\geq0$ such that each point in $\mathscr{B}(x,r)$ has unique normal projection onto $\mathcal{L}$. The reach of $\mathcal{L}$ is defined as $\Delta(\mathcal{L}):=\inf_{x\in\mathcal{L}} \Delta(\mathcal{L},x)$, which reflects the curvature of $\mathcal{L}$ if it is a manifold. Recall that $\mathcal{N}_{\delta}(\mathcal{M})$ defined in (\ref{neighborhood}) is a small neighborhood of $\mathcal{M}$ when $\delta$ is small. 
\begin{lemma}\label{lambda2bound}
Under assumptions ({\bf F1}), ({\bf F2}), ({\bf F3}), ({\bf F4}) and ({\bf K1}), we have\\
(i) $\mathcal{M}$ is an $r$-dimensional compact manifold with positive reach.\\
When $h$ is small enough, we have \\
(ii) $\mathcal{M}_h\subset \mathcal{N}_{\delta_0}(\mathcal{M})$, where $\delta_0>0$ is given in ({\bf F2});\\
(iii) $\inf_{x\in \mathcal{M}_h} [\lambda_{j-1,h}(x) - \lambda_{j,h}(x) ] > \beta_0$, $j=r+1,\cdots,d$, and $\sup_{x\in \mathcal{M}_h} \lambda_{r+1,h}(x) < -\beta_0$ for some constant $\beta_0>0$ that does not depend on $h$;\\
(iv) $\mathcal{M}_h$ is an $r$-dimensional manifold with $\Delta(\mathcal{M}_h)>\beta_1$ for some constant $\beta_1>0$ that does not depend on $h$.
\end{lemma}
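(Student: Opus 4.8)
\textbf{Proof plan for Lemma~\ref{lambda2bound}.}

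The plan is to prove parts (i)--(iv) in sequence, with the later parts leveraging stability of the earlier structure under the perturbation $f_h - f$. For part (i), I would argue that $\mathcal{M}$ is a properly embedded $r$-dimensional submanifold by viewing it as (a relatively open subset of) the zero set of the vector-valued map $G(x) = V(x)^T\nabla f(x) \in \mathbb{R}^{d-r}$. The key point is that on $\mathcal{N}_{\delta_0}(\mathcal{M})$ the individual functions $g_i(x) = v_i(x)^T\nabla f(x)$, $i = r+1,\dots,d$, are well-defined and $C^1$: assumption ({\bf F4}) guarantees the relevant eigenvalues are simple on $\mathcal{N}_{\delta_0}(\mathcal{M})$, so the eigenvectors $v_i$ can be chosen smoothly (locally, up to sign, which does not affect $g_i$), and ({\bf F1}) gives enough regularity. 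Assumption ({\bf F2}) states precisely that the gradients $\nabla g_i$ are linearly independent (or $\nabla g_d \neq 0$ when $d-r=1$) on $\mathcal{N}_{\delta_0}(\mathcal{M})$, so $G$ is a submersion near $\mathcal{M}$ and $\{G = 0\}$ is an $r$-manifold there; intersecting with the open condition $\lambda_{r+1}(x) < 0$ (which by ({\bf F3}) cannot be violated by equality on the relevant set, so $\mathcal{M}$ is relatively closed in the locus $\{G=0\}\cap\mathcal{N}_{\delta_0}(\mathcal{M})$) keeps it a manifold. Compactness follows because $\mathcal{M}\subset\mathcal{H} = [0,1]^d$ is closed and bounded — here one uses ({\bf F3}) again to rule out boundary-of-$\mathcal H$ pathologies and to see $\mathcal M$ is closed in $\mathbb R^d$. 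Finally, a $C^1$ compact embedded submanifold of $\mathbb{R}^d$ has positive reach by Federer's theorem (the reach is positive for any compact $C^{1,1}$, in fact $C^2$, submanifold; here ({\bf F1}) gives $C^2$ coordinates for $\mathcal M$ since $G$ is $C^2$).

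For part (ii), I would proceed by contradiction using a compactness argument. Since $f \in C^4$ and $K \in C^4$ with compact support, standard kernel bias estimates give $\sup_{x\in\mathcal H}\|\nabla^j f_h(x) - \nabla^j f(x)\| = O(h^2)$ for $j = 0,1,2$ (and one more derivative), hence $f_h \to f$ in $C^3(\mathcal{H})$. By ({\bf F4}) the eigenvalue gaps of $\nabla^2 f$ are bounded below on $\mathcal{N}_{\delta_0/2}(\mathcal M)$, so by the Davis--Kahan / $\sin\Theta$ perturbation bound the eigenvectors $v_{i,h}$ converge uniformly to $v_i$ there, giving $V_h^T\nabla f_h \to V^T\nabla f$ and $\lambda_{r+1,h}\to\lambda_{r+1}$ uniformly on that neighborhood. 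If part (ii) failed, there would be a sequence $h_k \to 0$ and $x_k \in \mathcal{M}_{h_k}\setminus\mathcal{N}_{\delta_0}(\mathcal M)$; passing to a subsequence $x_k \to x_*\in\mathcal H\setminus\mathcal{N}_{\delta_0}(\mathcal M)^{\circ}$ (so $\|V(x_*)^T\nabla f(x_*)\| \geq \delta_0 > 0$ or $\lambda_{r+1}(x_*)\geq 0$), but continuity of the defining functions of $\mathcal M_{h_k}$ and uniform convergence would force $\|V(x_*)^T\nabla f(x_*)\| = 0$ and $\lambda_{r+1}(x_*) \le 0$, contradicting the choice of $x_*$ unless $x_*$ lies on the boundary stratum — which ({\bf F3}) rules out since it would be a point with $\lambda_{r+1}=0$ and $V^T\nabla f = 0$. (One subtlety to handle: a point $x_k\in\mathcal M_{h_k}$ far from $\mathcal M$ need not be near where the $v_{i,h}$ are controlled; one resolves this by first establishing that $\mathcal M_{h_k}$ must at least accumulate only on the zero set of $V^T\nabla f$ intersected with $\{\lambda_{r+1}\le 0\}$, which is $\mathcal M$ itself by ({\bf F3}), using the full-space uniform bounds on $\nabla^j f_h - \nabla^j f$ away from eigenvalue crossings — or, cleaner, invoking ({\bf F3}) globally on $\mathcal H$ to say $\{\lambda_{r+1}=0, V^T\nabla f=0\}=\emptyset$ so that near any accumulation point the eigenvalue $\lambda_{r+1}$ is bounded away from the crossings that matter.)

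Parts (iii) and (iv) then follow from (ii) plus the same uniform $C^3$-convergence of $f_h$ to $f$. For (iii): on $\mathcal{N}_{\delta_0}(\mathcal M)\supset\mathcal M_h$ (for small $h$) assumption ({\bf F4}) gives $\inf[\lambda_{j-1}(x)-\lambda_j(x)] =: 2\beta_0' > 0$ and ({\bf F2})/({\bf F3}) plus $\mathcal M\subset\mathcal{N}_{\delta_0}(\mathcal M)$ gives $\sup_{\mathcal M}\lambda_{r+1} =: -2\beta_0'' < 0$ (using that $\mathcal M$ is compact and $\lambda_{r+1}<0$ on it, continuous); extend the sup bound to a slightly larger neighborhood of $\mathcal M$ by continuity, then transfer to $\mathcal M_h\subset$ that neighborhood and to the perturbed eigenvalues $\lambda_{j,h}$ via $|\lambda_{j,h}(x)-\lambda_j(x)| \le \|\nabla^2 f_h(x)-\nabla^2 f(x)\|_F = O(h^2)$, choosing $\beta_0$ to be, say, half the minimum of the relevant $f$-level constants and taking $h$ small. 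For (iv): $\mathcal M_h$ is an $r$-manifold by the same submersion argument as in (i) applied to $G_h(x) = V_h(x)^T\nabla f_h(x)$ — here I need that $\nabla G_h$ is full rank on $\mathcal M_h$, which holds for small $h$ because $\mathcal M_h\subset\mathcal{N}_{\delta_0}(\mathcal M)$ where $\nabla G$ is full rank (({\bf F2})) and $\nabla G_h \to \nabla G$ uniformly (this uses convergence of one more derivative, hence the fourth-derivative count in ({\bf F1}), to control $\nabla v_{i,h}$ via differentiated perturbation bounds). A uniform lower bound $\Delta(\mathcal M_h) > \beta_1$ independent of $h$ then follows from a standard quantitative reach bound expressed through the $C^1$-norm of a local graph parametrization and the smallest singular value of $\nabla G_h$, both of which are controlled uniformly in $h$ by the above. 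The main obstacle I expect is the bookkeeping in part (ii): making the compactness/contradiction argument fully rigorous where $\mathcal M_h$ lives, since the eigenvector perturbation bounds are only valid on the neighborhood of $\mathcal M$ where eigenvalue gaps are controlled, and one must argue a priori that $\mathcal M_h$ cannot escape to a region where this control is lost — this is exactly where ({\bf F3}) (no degenerate ridge points with $\lambda_{r+1}=0$) does the essential work, and the argument must be phrased to use it cleanly rather than circularly.
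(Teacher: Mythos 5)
Your overall plan is sound and your parts (i), (iii), (iv) are in the right spirit; in particular (i) matches the paper's use of the implicit function theorem and Federer's Theorem~4.12, and (iv) matches the paper's strategy of controlling a lower bound on $\det(L_h^T L_h)$ to deduce a uniform reach bound (the paper makes this explicit via Federer's Lemma~4.11 and Theorems~4.10/4.12 applied to $\mathcal{M}_h = \cap_i \mathcal{M}_{i,h}$, whereas you appeal to ``a standard quantitative reach bound''; this is workable but should be made precise). The interesting divergence is in part (ii), where the paper argues directly rather than by contradiction, and where you have correctly identified a real obstacle but proposed the wrong fix.

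You flag the right worry: $\mathcal{M}_h$ lives a priori anywhere in $\mathcal{H}$, and individual eigenvectors $v_{i,h}$ are only under control where the relevant eigenvalue gaps are bounded below, which you only have on a neighborhood of $\mathcal{M}$ from the first sentence of ({\bf F4}). You then try to close the gap by invoking ({\bf F3}) globally, but ({\bf F3}) only rules out degenerate points with $\lambda_{r+1}=0$ and $V^T\nabla f=0$ simultaneously; it says nothing about eigenvalue crossings among $\lambda_{r+1},\dots,\lambda_d$ or between $\lambda_{r+1}$ and $\lambda_r$ elsewhere, which is what you would need to control the eigenvectors. The resolution the paper uses is different and cleaner. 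First, the defining condition $V_h(x)^T\nabla f_h(x)=0$ is equivalent to $V_h(x)V_h(x)^T\nabla f_h(x)=0$, so you only ever need the spectral projection $V_hV_h^T$ onto the bottom $d-r$ eigenspace, never the individual eigenvectors. Second, the second sentence of ({\bf F4}) is a \emph{global} statement: $\lambda_r(x) > \lambda_{r+1}(x)$ for all $x\in\mathcal{H}$, so by compactness $\delta_{\text{gap}} := \inf_{x\in\mathcal{H}}[\lambda_r(x)-\lambda_{r+1}(x)] > 0$. This is exactly the gap that Davis--Kahan needs to control the projection $VV^T$, and it is available on the whole of $\mathcal{H}$, not just near $\mathcal{M}$. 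The paper then writes, for $x\in\mathcal{M}_h$ (where $V_hV_h^T\nabla f_h=0$),
\[
V(x)^T\nabla f(x) = V(x)^T\bigl[V(x)V(x)^T\nabla f(x) - V_h(x)V_h(x)^T\nabla f_h(x)\bigr],
\]
and bounds the norm by $O(h^2)$ uniformly on $\mathcal{H}$ using the global Davis--Kahan projection bound together with $\sup_{\mathcal{H}}\|\nabla^2 f_h - \nabla^2 f\| = O(h^2)$. This shows directly that $\mathcal{M}_h$ lies in $\mathcal{N}_\delta(\mathcal{M}^{(1)})$ for arbitrarily small $\delta>0$ once $h$ is small, with no compactness/contradiction needed; ({\bf F3}) then only has to do the job it is designed for, namely pushing $\lambda_{r+1}$ uniformly below zero on a neighborhood of $\mathcal{M}^{(1)}$, which gives the $\lambda_{r+1}<0$ part of (ii) and the constant $\beta_0$ in (iii). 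If you rewrite your (ii) around these two observations --- work with the projection rather than individual eigenvectors, and use the global $\lambda_r > \lambda_{r+1}$ from ({\bf F4}) rather than ({\bf F3}) as the source of global control --- the subsequence argument becomes unnecessary and the subtlety you worried about disappears.
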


\begin{remark}\label{zerob}{\em
Property (iii) states that $\lambda_{r+1,h}$ is uniformly bounded away from zero on $\mathcal{M}_h$. As we show in Lemma~\ref{lambda2}, $\wh\lambda_{r+1}$ is a strongly uniform consistent estimator of $\lambda_{r+1,h}$ under our assumptions, that is, $\sup_{x\in\mathcal{H}}|\wh\lambda_{r+1}(x)-\lambda_{r+1,h}(x)|=o(1)$ almost surely, which implies that with probability one $\wh\lambda_{r+1}$ has the same sign as $\lambda_{r+1,h}$ on $\mathcal{M}_h$ for large $n$. This allows us to use $b_n=0$ in $\wh C_{n,h}(a_n,b_n)$, and focus on the behavior of $\wh V(x)^T\nabla\wh f(x)$ on $\mathcal{M}_h$ to choose $a_n$ so that $\wh C_{n,h}(a_n,b_n)$ in (\ref{confregform}) is an asymptotic confidence region for $\mathcal{M}_h$. Also see Section~\ref{Generalization} for different choices of $b_n$.}
\end{remark}

%
By VV we mainly mean the behavior of $\wh V(x)^T\nabla\wh f(x) = \wh V(x)^T\nabla\wh f(x) - V_h(x)^T\nabla f_h(x)$ for $x\in\mathcal{M}_h$. The following proposition shows the the asymptotic normality of this difference, which can be uniformly approximated by a linear combination of $d^2 \wh f(x) - d^2 f_h(x)$. This is not surprising because the difference depends on the estimation of eigenvectors of the Hessian, which has a slower rate of convergence than the estimation of the gradient. Note that each unit eigenvector has two possible directions. Without loss of generality, for $i=d-r,\cdots,d$, suppose that we fix the orientations of $\wh v_i(x)$, $v_{i,h}(x)$ and $v_i(x)$ in such a way that they vary continuously for $x$ in a neighborhood of $\mathcal{M}$ and have pairwise acute angles. 

For two matrices $A$ and $B$, let $A\otimes B$ be the Kronecker product between $A$ and $B$ (cf. page 31, Magnus and Neudecker, 2007). Recall that $D$ is the duplication matrix. For $i=r+1,\cdots,d$, let 
\begin{align}\label{pseudoinv}
m_i(x) = D^T\left(v_i(x)\otimes \sum_{j=1}^r\left[\frac{v_j(x)^T\nabla f(x)}{\lambda_i(x) - \lambda_j(x)} v_j(x)\right] \right),
\end{align}
which are $d(d+1)/2$ dimensional column vectors. 
Let $M(x)=(m_{r+1}(x),\cdots m_d(x))$, which is a $[d(d+1)/2]\times (d-r)$ matrix. Recall that $V_h(x)^T\nabla f_h(x)=0$ for $x\in\mathcal{M}_h$. The following result shows the asymptotic behavior of $\wh V(x)^T\nabla\wh f(x)$ on $\mathcal{M}_h$.

\begin{proposition}\label{pointwisenormal}
Under assumptions ({\bf F1}) - ({\bf F5}), ({\bf K1}), and ({\bf K2}), as $\gamma_{n,h}^{(2)}\rightarrow0$ and $h\rightarrow0$, we have 
\begin{align}\label{uniformapp}
\sup_{x\in\mathcal{M}_h} \big\|\wh V(x)^T\nabla\wh f(x) - M(x)^T[d^2 \wh f(x) - d^2 f_h(x)] \big\|  = O_p\left( \gamma_{n,h}^{(1)} + (\gamma_{n,h}^{(2)})^2 \right),
\end{align}
and for $x\in\mathcal{N}_{\delta_0}(\mathcal{M})$,
\begin{align}\label{diffclt}
\sqrt{nh^{d+4}} M(x)^T[d^2 \wh f(x) - d^2 f_h(x)] \rightarrow_D \mathscr{N}_{d-r}(0,f(x)\Sigma(x)),\;\; \text{as } n\rightarrow\infty,
\end{align}
where $\Sigma(x) =M(x)^T\mathbf{R} M(x)$ is a positive definite matrix for $x\in\mathcal{N}_{\delta_1}(\mathcal{M})$ for some constant $\delta_1>0$, and $x\in\mathcal{M}_h$, when $h$ is small enough. 
%
\end{proposition}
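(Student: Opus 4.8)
The plan is to establish the two displays separately: (\ref{uniformapp}) is a uniform-in-$x$ linearization of $\wh V^T\nabla\wh f$ on $\mathcal{M}_h$ via perturbation theory for eigenvectors of $\nabla^2\wh f$, while (\ref{diffclt}) is a pointwise multivariate Lyapunov CLT for a normalized i.i.d.\ sum. I will use throughout the uniform rates $\sup_{\mathcal{H}}\|\nabla\wh f-\nabla f_h\|=O_p(\gamma_{n,h}^{(1)})$ and $\sup_{\mathcal{H}}\|\nabla^2\wh f-\nabla^2 f_h\|_F=O_p(\gamma_{n,h}^{(2)})$ (the rates $\gamma_{n,h}^{(k)}$ from Section~\ref{notationassump}).

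\textbf{Step 1: the linearization (\ref{uniformapp}).} Fix $x\in\mathcal{M}_h$, so $v_{i,h}(x)^T\nabla f_h(x)=0$ for $i=r+1,\dots,d$, and abbreviate $\Delta g=\nabla\wh f(x)-\nabla f_h(x)$, $\Delta H=\nabla^2\wh f(x)-\nabla^2 f_h(x)$. I would expand
\[
\wh v_i(x)^T\nabla\wh f(x)=v_{i,h}(x)^T\Delta g+(\wh v_i(x)-v_{i,h}(x))^T\nabla f_h(x)+(\wh v_i(x)-v_{i,h}(x))^T\Delta g,
\]
having dropped $v_{i,h}(x)^T\nabla f_h(x)=0$; the first term is $O_p(\gamma_{n,h}^{(1)})$ and the third is $O_p(\gamma_{n,h}^{(1)}\gamma_{n,h}^{(2)})$, uniformly over $\mathcal{M}_h$. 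For the dominant middle term I would, on a sequence of events of probability tending to one on which $\nabla^2\wh f$ is uniformly close to $\nabla^2 f_h$, use a second-order-accurate eigenvector expansion. The required spectral gaps are bounded below uniformly over $\mathcal{M}_h$ and over small $h$: the gaps among the bottom $d-r$ eigenvalues of $\nabla^2 f_h$ come from Lemma~\ref{lambda2bound}(iii), and the gaps between $\lambda_{i,h}$ ($i>r$) and $\lambda_{j,h}$ ($j\le r$) from $\lambda_{j,h}\ge\lambda_{r,h}$, $\lambda_{i,h}\le\lambda_{r+1,h}$, $\inf_{\mathcal{H}}(\lambda_r-\lambda_{r+1})>0$ under (\textbf{F4}), and uniform consistency of eigenvalues; the acute-angle orientation convention pins the sign of $\wh v_i$. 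This yields
\[
\wh v_i(x)-v_{i,h}(x)=\sum_{j\ne i}\frac{v_{j,h}(x)^T\Delta H\,v_{i,h}(x)}{\lambda_{i,h}(x)-\lambda_{j,h}(x)}v_{j,h}(x)+R_i(x),\qquad\sup_{x\in\mathcal{M}_h}\|R_i(x)\|=O_p\big((\gamma_{n,h}^{(2)})^2\big).
\]
Pairing with $\nabla f_h(x)$ and using $v_{j,h}(x)^T\nabla f_h(x)=0$ for $j\ge r+1$ kills every term except $j=1,\dots,r$. Using the identity $a^T\Delta H\,b=[d^2\wh f(x)-d^2 f_h(x)]^TD^T(b\otimes a)$ (valid since $\text{vec}(C)=D\,\text{vech}(C)$ for symmetric $C$) together with bilinearity of $\otimes$ identifies the surviving sum as $m_{i,h}(x)^T[d^2\wh f(x)-d^2 f_h(x)]$, where $m_{i,h}$ is (\ref{pseudoinv}) built from the $h$-subscripted eigendata. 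Finally $\|m_{i,h}(x)-m_i(x)\|=O(h^2)$ because $f\in C^4$ forces $C^2$-closeness of $f_h$ to $f$ on $\mathcal{H}$, hence closeness of the eigendata via the gap bound of (\textbf{F4}); since $h^2\gamma_{n,h}^{(2)}=\gamma_{n,h}^{(0)}\le\gamma_{n,h}^{(1)}$, replacing $m_{i,h}$ by $m_i$ costs only $O_p(\gamma_{n,h}^{(1)})$, and stacking over $i=r+1,\dots,d$ gives (\ref{uniformapp}).

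\textbf{Step 2: the CLT (\ref{diffclt}) and positive definiteness of $\Sigma$.} Fix $x\in\mathcal{N}_{\delta_0}(\mathcal{M})$ and write, with $\xi_i=(d^2K)((x-X_i)/h)$,
\[
\sqrt{nh^{d+4}}\,M(x)^T[d^2\wh f(x)-d^2 f_h(x)]=\sum_{i=1}^n(nh^d)^{-1/2}M(x)^T\{\xi_i-\mathbb{E}\xi_i\},
\]
a normalized sum of i.i.d.\ vectors that are bounded by (\textbf{K1}). A change of variables $y=x-hs$ and continuity of $f$ give $h^{-d}\mathbb{E}[M(x)^T\xi_1\xi_1^TM(x)]\to f(x)M(x)^T\mathbf{R}M(x)$ and $h^{-d}(\mathbb{E}[M(x)^T\xi_1])(\mathbb{E}[M(x)^T\xi_1])^T=O(h^d)\to0$, so the limiting covariance is $f(x)\Sigma(x)$; the third-moment Lyapunov ratio is $O((nh^d)^{-1/2})\to0$ (since $\gamma_{n,h}^{(2)}\to0$ forces $nh^d\to\infty$), so Cram\'er--Wold plus the Lyapunov CLT give (\ref{diffclt}). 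For positive definiteness: $\mathbf{R}$ is positive definite by (\textbf{K2}) (Remark~\ref{discuss}(iv)), so it suffices that $M(x)$ has full column rank $d-r$ near $\mathcal{M}$. On $\mathcal{M}$, (\textbf{F5}) and $V(x)^T\nabla f(x)=0$ give $\nabla f(x)=\sum_{j\le r}(v_j(x)^T\nabla f(x))v_j(x)\ne0$, whence each $w_i(x):=\sum_{j=1}^r\frac{v_j(x)^T\nabla f(x)}{\lambda_i(x)-\lambda_j(x)}v_j(x)$ is nonzero and orthogonal to every $v_k(x)$, $k>r$; if $\sum_i a_i m_i(x)=0$ then $\text{vec}(\sum_i a_i w_i v_i^T)\in\ker D^T$, i.e.\ $\sum_i a_i w_i v_i^T$ is antisymmetric, so its symmetric part $\frac{1}{2}\sum_i a_i(w_i v_i^T+v_i w_i^T)=0$, and right-multiplying by $v_k(x)$ yields $a_k w_k(x)=0$, hence $a_k=0$. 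Thus $M$ has full column rank on the compact set $\mathcal{M}$ (Lemma~\ref{lambda2bound}(i)); by continuity of $M$ and a compactness--contradiction argument (a limit point of the rank-deficient set lies on $\mathcal{M}$, using (\textbf{F3}) to rule out $\lambda_{r+1}=0$), full rank extends to $\mathcal{N}_{\delta_1}(\mathcal{M})$ for some $\delta_1>0$, and $\mathcal{M}_h\subset\mathcal{N}_{\delta_1}(\mathcal{M})$ for small $h$ as in Lemma~\ref{lambda2bound}(ii).

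\textbf{Main obstacle.} The technical heart is Step~1's uniform second-order eigenvector control: one must certify $\sup_{x\in\mathcal{M}_h}\|R_i(x)\|=O_p((\gamma_{n,h}^{(2)})^2)$, not merely $o_p(\gamma_{n,h}^{(2)})$, uniformly over $x\in\mathcal{M}_h$ and over the randomness of $\wh f$. This requires combining the uniform-in-$h$ lower gap bounds on $\mathcal{M}_h$, the uniform rate for $\nabla^2\wh f-\nabla^2 f_h$, and a quantitative expansion of the spectral projector (e.g.\ via its Riesz/resolvent representation), all while tracking the acute-angle orientation so that $\wh v_i$ is compared with the correctly signed $v_{i,h}$. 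The change of variables in the CLT and the rank computation are, by comparison, routine.
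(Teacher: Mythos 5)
Your proof is correct, and for the linearization (\ref{uniformapp}) and the CLT (\ref{diffclt}) it follows essentially the same route as the paper: the paper also linearizes $\wh v_i - v_{i,h}$ via the first-order expansion of the eigenvector map $G_i$ (quoting the explicit formula $\nabla G_i(d^2 f(x)) = (v_i(x)^T \otimes (\lambda_i(x)\mathbf{I}_d - \nabla^2 f(x))^+)D$ from Magnus and Neudecker and the second-order bound from Dunajeva (2004), rather than redoing the resolvent expansion), and it cites Duong et al.\ (2008) for the convergence of $\sqrt{nh^{d+4}}[d^2\wh f(x)-d^2 f_h(x)]$ rather than running Lyapunov directly, but the content is identical, including the replacement of the $f_h$-eigendata version of $m_i$ by $m_i(x)$ at cost $O(h^2\gamma_{n,h}^{(2)})=\gamma_{n,h}^{(0)}$. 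Where you genuinely diverge is the positive definiteness of $\Sigma(x)$: you show directly that $M(x)$ has full column rank by observing that $\ker D^T$ is exactly $\text{vec}$ of antisymmetric matrices, so $\sum_i a_i m_i(x)=0$ forces the symmetric part of $\sum_i a_i w_i v_i^T$ to vanish, and right-multiplying by $v_k$ (using $w_i \perp v_k$ for $k>r$) isolates $a_k w_k = 0$, hence $a_k=0$ under (\textbf{F5}). The paper instead derives a quantitative lower bound $\lambda_{\min}(\Sigma(x)) \gtrsim \min_i \sum_{j\le r}\big[\tfrac{v_j(x)^T\nabla f(x)}{\lambda_i(x)-\lambda_j(x)}\big]^2$ by writing $(D^+)^T D^T = \tfrac12(\mathbf{I}_{d^2}+K_{d^2})$ and using the commutation-matrix identity $K_{d^2}(p\otimes q)=q\otimes p$ to symmetrize $W$. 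Your argument is cleaner and suffices for the qualitative claim of the proposition (and for uniform positive definiteness on the compact set $\mathcal{N}_{\delta_1}(\mathcal{M})$ by continuity); the paper's explicit bound is of independent use in the proof of Theorem~\ref{generalization}, where the analogous quantity $\lambda_{\min}(\Sigma^*(x))$ must be bounded below uniformly near critical points, and there the rank argument alone would not give the needed quantitative control.
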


%
%
%
For a positive definite matrix $A$, let $A^{1/2}$ be its square root such that $A^{1/2}$ is also positive definite and $A=A^{1/2}A^{1/2}$. It is known that $A^{1/2}$ is uniquely defined. The asymptotic normality result in (\ref{diffclt}) suggests that we can standardize $\wh V(x)^T\nabla\wh f(x)$ by left multiplying the matrix $Q(x):=[f(x) \Sigma(x)]^{-1/2}$, which is unknown and can be further estimated by a plug-in estimator $Q_n(x):=[\wh f(x)\wh\Sigma(x)]^{-1/2}$ as specified below. Let $\wh\Sigma(x) =\wh M(x)^T\mathbf{R} \wh M(x)$ with $\wh M(x)=(\wh m_{r+1}(x),\cdots \wh m_d(x))$, where $$\wh m_i(x) = D^T\left(\wh v_i(x)\otimes \sum_{j=1}^r\left[\frac{\wh v_j(x)^T\nabla \wh f(x)}{\wh \lambda_i(x) - \wh \lambda_j(x)}\wh v_j(x)\right] \right) .$$ 
%
Let 
\begin{align}\label{Bnx}
B_n(x)= \| Q_n(x) \wh V(x)^T\nabla\wh f(x) \| = \| \wh V(x)^T\nabla\wh f(x) \|_{[\wh f(x)\wh\Sigma(x)]^{-1}}. 
\end{align}
We consider the following form of confidence regions for $\mathcal{M}_h$, which is slightly more formal than (\ref{confregform}). For any $a_n\geq 0$ and $b_n\in\mathbb{R}$, let
\begin{align}\label{hatCnh}
\wh C_{n,h}(a_n,b_n)=\left\{x\in\mathcal{H}:\;   \sqrt{nh^{d+4}} B_n(x) \leq a_n, \text{and}\; \wh\lambda_{r+1}(x) <b_n \right\}. 
\end{align}

We first consider $b_n=0$ for the reason given in Remark~\ref{zerob} and for simplicity write $\wh C_{n,h}(a_n)=\wh C_{n,h}(a_n,0)$. Alternative choices of $b_n$ are given in Section~\ref{Generalization}. 
For a given $0<\alpha<1$, we want to find a sequence $a_{n,h,\alpha}$ such that $\mathbb{P}(\mathcal{M}_h \subset \wh C_{n,h}(a_{n,h,\alpha})) \rightarrow 1-\alpha$, that is, $\wh C_{n,h}(a_{n,h,\alpha})$ is an asymptotic $100(1-\alpha)\%$ confidence region for $\mathcal{M}_h$. Let 
\begin{align}\label{Dnx}
&D_n(x) = \| Q(x)M(x)^T( d^2\wh f(x) - d^2 f_h(x))\|.
\end{align}
The following proposition indicates that the extreme value behaviors of $B_n(x)$ and $D_n(x)$ on $\mathcal{M}_h$ are close, and hence the sequence $a_{n,h,\alpha}$ can be determined by the extreme value distribution of $\sqrt{nh^{d+4}}\sup_{x\in\mathcal{M}_h} D_n(x)$.

\begin{proposition} \label{ridgenessest}
Under assumptions ({\bf F1}) - ({\bf F5}), ({\bf K1}), and ({\bf K2}), as $\gamma_{n,h}^{(2)}\rightarrow0$ and $h\rightarrow0$, we have that
\begin{align}
 & \sup_{x\in\mathcal{M}_h} D_n(x) = O_p (\gamma_{n,h}^{(2)}), \label{ridgenessest0}\\
&\sup_{x\in \mathcal{M}_h} B_n(x) - \sup_{x\in\mathcal{M}_h} D_n(x) 
 = O_p\left( (\gamma_{n,h}^{(2)})^2  + \gamma_{n,h}^{(1)} \right).\label{ridgenessest2}
\end{align}
%
\end{proposition}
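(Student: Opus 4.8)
The plan is to build everything on the uniform approximation \eqref{uniformapp} and the CLT \eqref{diffclt} from Proposition~\ref{pointwisenormal}, together with the fact that the normalizing matrices $Q_n(x)$ converge uniformly to $Q(x)$ on $\mathcal{M}_h$. For \eqref{ridgenessest0}, I would first argue that $Q(x)M(x)^T$ is uniformly bounded on $\mathcal{N}_{\delta_1}(\mathcal{M})$ (hence on $\mathcal{M}_h$ for small $h$, using Lemma~\ref{lambda2bound}(ii) and the positive definiteness of $\Sigma(x)$ asserted in Proposition~\ref{pointwisenormal}), so that
\begin{align*}
\sup_{x\in\mathcal{M}_h} D_n(x) \;\le\; \Big(\sup_{x\in\mathcal{M}_h}\|Q(x)M(x)^T\|\Big)\,\sup_{x\in\mathcal{H}}\|d^2\wh f(x)-d^2 f_h(x)\| \;=\; O(1)\cdot O_p(\gamma_{n,h}^{(2)}),
\end{align*}
where the last bound is the standard uniform rate for the second-derivative KDE recorded just before the assumptions (with $k=2$). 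This gives \eqref{ridgenessest0} directly.

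For \eqref{ridgenessest2} I would pass through the reverse-triangle inequality for suprema: $|\sup_{x\in\mathcal{M}_h}B_n(x)-\sup_{x\in\mathcal{M}_h}D_n(x)|\le \sup_{x\in\mathcal{M}_h}|B_n(x)-D_n(x)|$, so it suffices to control $|B_n(x)-D_n(x)|$ uniformly. Writing $B_n(x)=\|Q_n(x)\wh V(x)^T\nabla\wh f(x)\|$ and $D_n(x)=\|Q(x)M(x)^T(d^2\wh f(x)-d^2 f_h(x))\|$, a further application of $|\,\|u\|-\|v\|\,|\le\|u-v\|$ reduces matters to bounding
\begin{align*}
\big\|Q_n(x)\wh V(x)^T\nabla\wh f(x) - Q(x)M(x)^T(d^2\wh f(x)-d^2 f_h(x))\big\|.
\end{align*}
I would split this as $Q_n(x)\big[\wh V(x)^T\nabla\wh f(x)-M(x)^T(d^2\wh f(x)-d^2 f_h(x))\big]$ plus $\big[Q_n(x)-Q(x)\big]M(x)^T(d^2\wh f(x)-d^2 f_h(x))$. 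The first term is bounded by $\|Q_n(x)\|$ times the quantity in \eqref{uniformapp}, which is $O_p(\gamma_{n,h}^{(1)}+(\gamma_{n,h}^{(2)})^2)$; here I need $\sup_{x\in\mathcal{M}_h}\|Q_n(x)\|=O_p(1)$, which follows from the uniform consistency of $\wh f$, $\wh v_i$, $\wh\lambda_i$ (via Lemma~\ref{lambda2} and the eigen-gap from Lemma~\ref{lambda2bound}(iii)) together with $\inf_{x\in\mathcal{M}_h}\lambda_{\min}(f(x)\Sigma(x))>0$. The second term is bounded by $\sup_{x\in\mathcal{M}_h}\|Q_n(x)-Q(x)\|\cdot\sup_{x\in\mathcal{M}_h}\|M(x)^T\|\cdot O_p(\gamma_{n,h}^{(2)})$; since $\wh M,\wh\Sigma,\wh f$ are consistent at rate at least $\gamma_{n,h}^{(2)}$ (gradient and eigen-quantities converge no slower than the Hessian), and matrix square-root/inverse are Lipschitz on a neighborhood of the positive-definite limit, $\sup_{x\in\mathcal{M}_h}\|Q_n(x)-Q(x)\|=O_p(\gamma_{n,h}^{(2)})$, making this term $O_p((\gamma_{n,h}^{(2)})^2)$. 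Collecting the two pieces yields \eqref{ridgenessest2}.

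The main obstacle I anticipate is not any single estimate but the bookkeeping needed to make all the ``$O_p(1)$'' and ``consistency'' claims uniform over $\mathcal{M}_h$ \emph{simultaneously}, while $\mathcal{M}_h$ itself moves with $h$. The clean way around this is to do all the work on the fixed neighborhood $\mathcal{N}_{\delta_1}(\mathcal{M})$ (or $\mathcal{N}_{\delta_0}(\mathcal{M})$): on this fixed set the map $x\mapsto Q(x)M(x)^T$ is continuous with $\Sigma(x)$ positive definite, hence bounded with $\lambda_{\min}$ bounded below, and the perturbation bounds for eigenvectors/eigenvalues (needing the gap conditions of \textbf{(F4)} and Lemma~\ref{lambda2bound}) and for the matrix square-root are uniform there. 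Then invoking Lemma~\ref{lambda2bound}(ii), namely $\mathcal{M}_h\subset\mathcal{N}_{\delta_0}(\mathcal{M})$ for small $h$, transfers every uniform bound from the fixed set to $\mathcal{M}_h$ for free. A secondary technical point is verifying that $\|Q_n(x)-Q(x)\|$ really is $O_p(\gamma_{n,h}^{(2)})$ rather than something slower; this hinges on the observation that the gradient enters $M(x)$ and on the rate $\gamma_{n,h}^{(1)}\le\gamma_{n,h}^{(2)}$, so the slowest input to $Q_n$ is the Hessian estimation at rate $\gamma_{n,h}^{(2)}$, and the smooth (locally Lipschitz) dependence of inverse-square-root on its argument preserves that rate.
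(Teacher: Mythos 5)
Your strategy is essentially the one the paper uses: prove \eqref{ridgenessest0} by uniformly bounding $Q(x)M(x)^T$ on a fixed neighborhood of $\mathcal{M}$ and multiplying by the uniform second-derivative KDE rate; for \eqref{ridgenessest2}, pass through the reverse triangle inequality for suprema, add and subtract intermediate quantities, and bound the two resulting pieces using \eqref{uniformapp}, the positive definiteness of $\Sigma(x)$ from Proposition~\ref{pointwisenormal}, and Lemma~\ref{lambda2}. The paper swaps $Q_n\to Q$ first and then $E_n\to M^T(d^2\wh f-d^2 f_h)$, while you swap in the opposite order, but the two decompositions are algebraically parallel and produce the same rate; the paper also makes the square-root perturbation step explicit via the identity $Q_n-Q=Q_n\{[\wh f\wh\Sigma]^{1/2}-[f\Sigma]^{1/2}\}Q$ and Higham's bound, whereas you simply cite local Lipschitz continuity, which is fine.

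One imprecision worth flagging: you assert $\sup_{x\in\mathcal{M}_h}\|Q_n(x)-Q(x)\|=O_p(\gamma_{n,h}^{(2)})$. Since $Q(x)=[f(x)\Sigma(x)]^{-1/2}$ is built from the \emph{true} density (not $f_h$), and the proposition does not assume undersmoothing, the plug-in estimator incurs an additional $O(h^2)$ bias; the correct rate (the paper's (\ref{ridgenessest1})) is $O_p(\gamma_{n,h}^{(2)}+h^2)$. This omission is harmless here because the term it multiplies is itself $O_p(\gamma_{n,h}^{(2)})$, and $h^2\gamma_{n,h}^{(2)}=h\gamma_{n,h}^{(1)}=o(\gamma_{n,h}^{(1)})$, so the combined bound is still $O_p((\gamma_{n,h}^{(2)})^2+\gamma_{n,h}^{(1)})$. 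You should nevertheless state the bias term to keep the argument airtight, especially since the same $Q_n-Q$ estimate is reused later (e.g.\ in the proof of Theorem~\ref{underconfidenceregion}) where the $h^2$ matters.
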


\begin{remark}\label{equivl}

{\em 
%
When $r=1$, $m_i(x)$, $i=2,\cdots,d$ in (\ref{pseudoinv}) can be simplified to
\begin{align*}
m_i(x) = \frac{\|\nabla f(x)\|}{\lambda_i(x) - \lambda_1(x)}  D^T\left(v_i(x)\otimes  v_1(x)\right).
\end{align*}
Correspondingly, we can replace $\wh m_i(x)$ in $B_n(x)$ by 
\begin{align*}
\wt m_i(x) = \frac{\|\nabla \wh f(x)\|}{\wh\lambda_i(x) - \wh\lambda_1(x)}  D^T\left(\wh v_i(x)\otimes  \wh v_1(x)\right),
\end{align*}
and the conclusion in this proposition is not changed, following the same proof of this proposition.
}
\end{remark}

We need to find the asymptotic distribution of $ \sqrt{nh^{d+4}} \sup_{x\in\mathcal{M}_h}D_n(x)$. In particular, we will show that there exists $\beta_h$ such that for any $z\in\mathbb{R}$,
\begin{align*}
\mathbb{P} \left\{ \sqrt{2\log(h^{-1})} \left( \sqrt{nh^{d+4}} \sup_{x\in\mathcal{M}_h} D_n(x) -\beta_h\right) \leq z\right\} \rightarrow e^{-e^{-z}}.
\end{align*}
To this end, we will represent $\sqrt{nh^{d+4}}D_n(x)$ as an empirical process and approximate its supremum by the extreme value of a Gaussian process defined on a class of functions.

For $z\in\mathbb{R}^{d-r}\backslash\{0\}$, let $A(x,z) = M(x)Q(x)z.$ Notice that $\sqrt{f(x)}\|A(x,z)\|_{\mathbf{R}}=\|z\|$. Let $g_{x,z}(\cdot)=\frac{1}{\sqrt{h^{d}}}\left\langle A(x,z), d^2K\left(\frac{x-\cdot}{h}\right) \right\rangle$, and define the class of functions
\begin{align}\label{funcclass}
%
\mathcal{F}_h = \left\{g_{x,z}(\cdot):\;x\in\mathcal{M}_h, z\in \mathbb{S}^{d-r-1}\right\}. 
\end{align}
Consider the local empirical process $\{\mathbb{G}_n(g_{x,z}):\; g_{x,z}\in\mathcal{F}_h\}$, where
\begin{align*}
\mathbb{G}_n(g_{x,z}) = \frac{1}{\sqrt{n}}\sum_{i=1}^n\left[g_{x,z}(X_i)-\mathbb{E}g_{x,z}(X_1)\right].
\end{align*}
%
%
Due to the elementary result $\|v\|=\sup_{z\in\mathbb{S}^{d-r-1}} v^Tz$ for any $v\in \mathbb{R}^{d-r}$, it is clear that $\sqrt{nh^{d+4}} D_n(x) = \sup_{z\in\mathbb{S}^{d-r-1}} \mathbb{G}_n(g_{x,z})$. Hence
%
 %
%
%
%
%
%
%
\begin{align}\label{dnxequiv}
 \sqrt{nh^{d+4}}  \sup_{x\in\mathcal{M}_h}  D_n(x) = \sup_{g_{x,z}\in\mathcal{F}_h} \mathbb{G}_n(g_{x,z}).
\end{align}
Using similar arguments as given in Chernozhukov et al. (2014), the supremum of the empirical process in (\ref{dnxequiv}) can be approximated by the supremum of a Gaussian process, as shown in the following theorem. Let $\mathbb{B}$ be a centered Gaussian process on $\mathcal{F}_h$ such that for all $g_{x,z}, g_{\tilde x,\tilde z}\in\mathcal{F}_h$,
\begin{align*}
\mathbb{E}(\mathbb{B}(g_{x,z})\mathbb{B}(g_{\tilde x,\tilde z})) {=} \text{Cov}(g_{x,z}(X_1),\; g_{\tilde x,\tilde z}(X_1)). 
\end{align*}

\begin{theorem}\label{gassianapprox}
Under assumptions ({\bf F1}) - ({\bf F5}), ({\bf K1}), and ({\bf K2}), as $\gamma_{n,h}^{(0)}\log^{4}n\rightarrow0$ and $h\rightarrow0$ we have
\begin{align}\label{gaussapp}
\sup_{t>0} \left|\mathbb{P}\left( \sqrt{nh^{d+4}}  \sup_{x\in\mathcal{M}_h}  D_n(x) <t\right) - \mathbb{P}\left(\sup_{g\in\mathcal{F}_h}\mathbb{B}(g)<t\right)\right| = o(1).
%
\end{align}
\end{theorem}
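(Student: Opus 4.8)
The plan is to apply the non-asymptotic Gaussian coupling for suprema of empirical processes of Chernozhukov et al. (2014) to the triangular array of function classes $\mathcal{F}_h$ in (\ref{funcclass}). By (\ref{dnxequiv}) we have $\sqrt{nh^{d+4}}\sup_{x\in\mathcal{M}_h}D_n(x)=\sup_{g\in\mathcal{F}_h}\mathbb{G}_n(g)$, and by construction $\mathbb{E}[\mathbb{B}(g)\mathbb{B}(\tilde g)]=\mathrm{Cov}(g(X_1),\tilde g(X_1))$ is exactly the covariance of $\mathbb{G}_n$ on $\mathcal{F}_h$, so $\mathbb{B}$ is the relevant limiting ($P$-Brownian bridge) process; it therefore suffices to bound the Kolmogorov distance between $\sup_{g\in\mathcal{F}_h}\mathbb{G}_n(g)$ and $\sup_{g\in\mathcal{F}_h}\mathbb{B}(g)$. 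Since $D_n$ is built from the centered quantity $d^2\wh f-d^2 f_h$, no bias enters at this stage. I would carry this out in three steps: (a) identify a bounded envelope and the variance proxy of $\mathcal{F}_h$; (b) establish a VC-type uniform entropy bound with characteristics not depending on $n$; (c) insert these into the coupling inequality, combine with a Gaussian anti-concentration bound, and check that the resulting error vanishes under $\gamma_{n,h}^{(0)}\log^4 n\to0$.

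For (a): by ({\bf K1}), $K$ is supported on $\mathscr{B}(0,1)$ with bounded second derivatives, so $d^2K((x-y)/h)$ vanishes unless $\|x-y\|\le h$, and $\|A(x,z)\|=\|M(x)Q(x)z\|$ is bounded uniformly for $x$ in a fixed neighborhood of $\mathcal{M}$ and $z\in\mathbb{S}^{d-r-1}$: the denominators $\lambda_i-\lambda_j$ in (\ref{pseudoinv}) are bounded away from $0$ by Lemma~\ref{lambda2bound}(iii) and ({\bf F4}); $f$ and its derivatives are bounded by ({\bf F1}); and $Q(x)=[f(x)\Sigma(x)]^{-1/2}$ is bounded because $\Sigma(x)=M(x)^T\mathbf{R}M(x)$ is uniformly positive definite on $\mathcal{M}_h$ for small $h$ by Proposition~\ref{pointwisenormal}. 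Hence $\mathcal{F}_h$ admits the envelope $F(y)=Ch^{-d/2}\mathbf{1}\{d(y,\mathcal{M}_h)\le h\}$ with $C$ independent of $n$, so $b:=\|F\|_{\infty}\asymp h^{-d/2}$; integrating against $f$ gives $\sigma^{2}:=\sup_{g\in\mathcal{F}_h}\mathbb{E}g(X_1)^{2}\asymp1$, and $\sqrt{f(x)}\|A(x,z)\|_{\mathbf{R}}=\|z\|=1$ together with positive-definiteness of $\mathbf{R}$ (ensured by ({\bf K2})) also bounds $\sigma$ away from $0$.

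For (b): $\mathcal{M}_h\times\mathbb{S}^{d-r-1}$ is a compact $(d-1)$-dimensional parameter set (Lemma~\ref{lambda2bound}(iv)); $x\mapsto M(x)Q(x)$ is continuously differentiable on a neighborhood of $\mathcal{M}$ (eigenvectors of $\nabla^2 f$ vary smoothly where the relevant eigenvalues are simple by ({\bf F4}), and $f\in C^4$ by ({\bf F1})); and the components of the maps $y\mapsto d^2K((x-y)/h)$, over all $x\in\mathbb{R}^d$ and $h>0$, form a VC-type (Euclidean) class with fixed characteristics, being built from translates and dilates of finitely many fixed smooth compactly supported functions. By standard stability of VC-type classes under finite sums and products with uniformly bounded multipliers (cf. the uniform-in-bandwidth kernel-class arguments in the literature), $\mathcal{F}_h$ is VC type relative to $F$ with parameters $(A,v)$ independent of $n$ ($v$ a fixed multiple of $d$, $A$ at most polynomial in $h^{-1}$), and $\mathcal{F}_h$ is pointwise measurable since $(x,z)\mapsto g_{x,z}$ is continuous. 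Dudley's entropy bound then gives $\mathbb{E}\sup_{g\in\mathcal{F}_h}|\mathbb{B}(g)|=O(\sqrt{\log n})$ under the rate assumption.

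Finally, plugging $b\asymp h^{-d/2}$, $\sigma\asymp1$ and $(A,v)$ into the coupling inequality of Chernozhukov et al. (2014) and combining with Gaussian anti-concentration applied to $\sup_{g\in\mathcal{F}_h}\mathbb{B}(g)$ gives
\begin{align*}
\sup_{t>0}\Big|\mathbb{P}\big(\sup_{g\in\mathcal{F}_h}\mathbb{G}_n(g)<t\big)-\mathbb{P}\big(\sup_{g\in\mathcal{F}_h}\mathbb{B}(g)<t\big)\Big|=O\!\left(\Big(\frac{b^{2}(\log n)^{p}}{n}\Big)^{q}\right)
\end{align*}
for fixed $p,q>0$. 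Since $b^{2}/n\asymp(nh^{d})^{-1}=(\gamma_{n,h}^{(0)})^{2}/\log n$, the right-hand side is a fixed positive power of $\gamma_{n,h}^{(0)}$ times a fixed power of $\log n$, hence $o(1)$ whenever $\gamma_{n,h}^{(0)}\log^{4}n\to0$; the exponent $4$ in the hypothesis is calibrated to absorb the $\log n$ factors produced by the coupling and anti-concentration steps. The main obstacle is step (b): obtaining the VC-type entropy bound \emph{uniformly in} $h$ and tracking $(b,\sigma,A,v)$ precisely enough that the finite-sample bound of Chernozhukov et al. (2014), specialized to this array, is controlled exactly by $\gamma_{n,h}^{(0)}$ and a fixed power of $\log n$; the smoothness of $x\mapsto M(x)Q(x)$ near $\mathcal{M}$, which rests on ({\bf F4}) and the uniform nondegeneracy from Lemma~\ref{lambda2bound} and Proposition~\ref{pointwisenormal}, is the structural input that makes this work.
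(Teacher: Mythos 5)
Your proposal takes essentially the same route as the paper's proof: reduce to the sup of the empirical process $\mathbb{G}_n$ over the VC-type class $\mathcal{F}_h$, establish the uniform entropy and envelope/variance bounds via the boundedness of $A(x,z)$ (Lemma~\ref{lambda2bound}, Proposition~\ref{pointwisenormal}, ({\bf K1})--({\bf K2})), apply the Chernozhukov--Chetverikov--Kato coupling (the paper uses their Corollary 2.2 on the rescaled class $h^{d/2}\mathcal{F}_h$, which is equivalent to your $(b,\sigma)\asymp(h^{-d/2},1)$ normalization), and close with Dudley's bound $\mathbb{E}\sup|\mathbb{B}(g)|=O(\sqrt{\log n})$ plus Gaussian anti-concentration (their Lemma 2.4), with the $\log^4 n$ factor absorbing exactly the logs that arise. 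The paper is slightly more explicit about the three-term coupling rate $O_p(n^{-1/6}h^{d/3}\log n + n^{-1/4}h^{d/4}\log^{5/4}n + n^{-1/2}\log^{3/2}n)$ and the third/fourth moment bounds $O(h^d)$, but these are just the concrete instantiation of your step (c).
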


\begin{remark}\label{smoothness}$\;$\\[-10pt]

{\em
(i) In the derivation of the asymptotic distribution of the maximal deviation of density function estimation, Bickel and Rosenblatt (1973) use a sequence of Gaussian approximations. When extending the idea to multivariate density function estimation, Rosenblatt (1976) imposes an assumption that requires $f$ to be $d$ times continuously differentiable in order to use the Rosenblatt transformation (1952). This type of assumption is further used in related work for Gaussian approximation to maximal deviation in multivariate function estimation (see, e.g., Konakov and Piterbarg, 1984). In fact if one is willing to impose a similar assumption in our context (that is, $f$ is $d+2$ times continuously differentiable, because ridges are defined using up to the second derivatives of $f$), then it can be verified that the Gaussian process $\mathbb{B}(g_{x,z})$ has the following representation:
\begin{align*}
\mathbb{B}(g_{x,z}) \stackrel{d}{=} \int_{\mathbb{R}^d} g_{x,z}(s) d\mathbf{B}(M(s)),
\end{align*}
where $\mathbf{B}$ is the d-dimensional Brownian bridge, and $M$ is the Rosenblatt transformation. Instead of following the approach in Rosenblatt (1976) using a sequence of Gaussian approximations (where the last one is stationary in the context of density estimation), we directly find out the limiting extreme value distribution of $\mathbb{B}(g_{x,z})$, which is shown to be  locally stationary (see Definition~\ref{def-loc-stat}). This allows us to use a less stringent smoothness condition on $f$. 
%
%

(ii) Let $w_{x}(\cdot)=\frac{1}{\sqrt{h^{d}}} Q(x)^T M(x)^Td^2K\left(\frac{x-\cdot}{h}\right)$, so that $g_{x,z}(\cdot)=z^Tw_{x}(\cdot)$. Also let $S_h(x)=(S_{1,h}(x),\cdots,S_{d-r,h}(x))^T$ be a vector of centered Gaussian fields indexed by $\mathcal{M}_h$ such that for $x,\wt x\in\mathcal{M}_h$, $\mathbb{E}(S_h(x)S_h(\wt x)^T)=\text{Cov}(w_{x}(X_1),w_{\tilde x}(X_1))$. Then it is clear that $\sup_{g\in\mathcal{F}_h}\mathbb{B}(g) = \sup_{x\in\mathcal{M}_h}\|S_h(x)\|$, where $\|S_h(x)\|^2\sim \chi_{d-r}^2$ for any fixed $x\in\mathcal{M}_h$, because $\text{Cov}(S_h(x),S_h(x))=\mathbf{I}_{d-r}$, i.e., $S_{1,h}(x),\cdots,S_{d-r,h}(x)$ are independent when $d-r\geq2$. Note the standardization in $S_{h}(x)$ is only pointwise, and if $x-\wt x=o(h)$ then $S_{i,h}(x)$ and $S_{j,h}(\wt x)$ are dependent in general for $i\neq j$ when $d-r\geq2$. Overall $\|S_h(x)\|^2$ is a $\chi^2$ field indexed by $\mathcal{M}_h$, as a sum of squares of Gaussian fields with {\em cross dependence}, whereas independence of the Gaussian fields is usually assumed in the literature of extreme value theory for $\chi^2$ fields (see e.g. Piterbarg, 1994). This dependence structure has an effect on the final extreme value distribution result (see remark after Theorem~\ref{confidenceregion}).
%
}
\end{remark}

To find the distribution of $\sup_{g\in\mathcal{F}_h}\mathbb{B}(g)$, it is critical to calculate the covariance structure of $\mathbb{B}(g_{x,z})$, $g_{x,z}\in\mathcal{F}_h$, and verify it has the desired properties to apply Theorem~\ref{ProbMain}. For $g_{x,z},g_{\tilde x,\tilde z}\in\mathcal{F}_h$ (which means $x,\wt x\in\mathcal{M}_h$ and $z,\wt z\in \mathbb{S}^{d-r-1}$), let $r_h(x,\wt x,z,\wt z)$ be the correlation coefficient of $\mathbb{B}(g_{x,z})$ and $\mathbb{B}(g_{\tilde x,\tilde z})$.

\begin{proposition}\label{covstructure}
Let $\Delta x = \wt x - x$ and $\Delta z = \wt z-z$. Under assumptions ({\bf F1}) - ({\bf F5}), ({\bf K1}), and ({\bf K2}), as $h\rightarrow0$, $\Delta z\rightarrow0$ and $\Delta x/h\rightarrow 0$, we have
\begin{align}\label{covariancestructure}
r_h(x,\wt x,z,\wt z) = 1 -\frac{1}{2} \|\Delta z\|^2 - \frac{1}{2h^2} \Delta x^T\Omega(x,z)\Delta x    + o\left(\left\| \frac{\Delta x}{h}\right\|^2 + \|\Delta z\|^2 \right),
\end{align}
where 
\begin{align}\label{omegaxz}
\Omega(x,z) = \int_{\mathbb{R}^d} \nabla d^2 K(u)^T A(x,z) A(x,z)^T \nabla d^2 K(u) du,
\end{align}
and the o-term in (\ref{covariancestructure}) is uniform in $x,\wt x\in\mathcal{M}_h,$ $z,\wt z\in \mathbb{S}^{d-r-1}$, and $h\in(0,h_0]$ for some $h_0>0$.
\end{proposition}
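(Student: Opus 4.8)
The plan is to compute the correlation coefficient $r_h(x,\wt x,z,\wt z)$ directly from its definition as the normalized covariance of $\mathbb{B}(g_{x,z})$ and $\mathbb{B}(g_{\wt x,\wt z})$, and then Taylor-expand in the two small parameters $\Delta z$ and $\Delta x/h$. First I would write out $\mathrm{Cov}(g_{x,z}(X_1),g_{\wt x,\wt z}(X_1)) = \mathbb{E}[g_{x,z}(X_1)g_{\wt x,\wt z}(X_1)] - \mathbb{E}[g_{x,z}(X_1)]\,\mathbb{E}[g_{\wt x,\wt z}(X_1)]$. Using $g_{x,z}(\cdot)=h^{-d/2}\langle A(x,z),d^2K((x-\cdot)/h)\rangle$ and the substitution $u=(x-s)/h$, the leading term of the cross-expectation becomes $h^{-d}\int A(x,z)^T d^2K\!\big(\tfrac{x-s}{h}\big)\,d^2K\!\big(\tfrac{\wt x-s}{h}\big)^T A(\wt x,\wt z)\,f(s)\,ds = \int A(x,z)^T d^2K(u)\,d^2K\!\big(u+\tfrac{\Delta x}{h}\big)^T A(\wt x,\wt z)\,f(x-hu)\,du$; the product-of-means term is $O(h^d)$ and hence negligible after normalization since the variances are of constant order. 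By ({\bf F1}) and the compact support of $K$ from ({\bf K1}), $f(x-hu)=f(x)+O(h)$ uniformly, so up to $O(h)$ the covariance is $f(x)\,A(x,z)^T\big(\int d^2K(u)\,d^2K(u+\tfrac{\Delta x}{h})^T du\big)A(\wt x,\wt z)$.

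Next I would expand the inner matrix $\int d^2K(u)\,d^2K(u+\eta)^T\,du$ in $\eta=\Delta x/h$. Since $K$ has four continuous derivatives (so $d^2K$ has two), a second-order Taylor expansion of $d^2K(u+\eta)$ combined with integration by parts gives $\int d^2K(u)\,d^2K(u+\eta)^T\,du = \mathbf{R} - \tfrac12\int \nabla d^2K(u)^T(\cdot)\,\eta\eta^T(\cdot)\nabla d^2K(u)\,du + o(\|\eta\|^2)$; the first-order term vanishes because $\int d^2K(u)\,\partial_k d^2K(u)^T du$ is antisymmetric and its contraction with $A(x,z)A(x,z)^T$ integrates to zero. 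This is exactly where the matrix $\Omega(x,z)$ in (\ref{omegaxz}) appears. Simultaneously I would handle the $z$-dependence: write $A(\wt x,\wt z)=M(\wt x)Q(\wt x)\wt z$, expand $A(\wt x,\wt z)=A(x,z)+M(x)Q(x)\Delta z + O(\|\Delta x\|)$ using the $C^1$ (indeed $C^2$ by ({\bf F1}),({\bf F4})) dependence of $M,Q,v_i,\lambda_i$ on $x$ near $\mathcal{M}$, and use the normalization identity $\sqrt{f(x)}\|A(x,z)\|_{\mathbf{R}}=\|z\|=1$ noted just before (\ref{funcclass}). The numerator of $r_h$ then expands as $f(x)\big(A(x,z)^T\mathbf{R}A(x,z) + A(x,z)^T\mathbf{R}M(x)Q(x)\Delta z - \tfrac12 \Delta x^T\Omega(x,z)\Delta x/h^2 + \cdots\big)$, i.e. $1 + \langle z, \Delta z\rangle\cdot(\text{const}) - \tfrac{1}{2h^2}\Delta x^T\Omega(x,z)\Delta x + o(\cdot)$, where the cross term simplifies because $\sqrt{f(x)}Q(x)^TM(x)^T\mathbf{R}M(x)Q(x)\sqrt{f(x)}=\mathbf{I}_{d-r}$ by definition of $Q(x)=[f(x)\Sigma(x)]^{-1/2}$ and $\Sigma(x)=M(x)^T\mathbf{R}M(x)$, so $f(x)A(x,z)^T\mathbf{R}M(x)Q(x)\Delta z = z^T\Delta z$.

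Finally I would divide by the square roots of the two variances. The variance of $\mathbb{B}(g_{x,z})$ equals (to leading order) $f(x)\|A(x,z)\|_{\mathbf{R}}^2=1$ by the same normalization, with error $O(h)$, and likewise for $g_{\wt x,\wt z}$; these errors are absorbed into the stated $o$-term. Combining, and using $z^T\Delta z = -\tfrac12\|\Delta z\|^2$ (valid since $\|z\|=\|\wt z\|=1$ forces $z^T\wt z = 1-\tfrac12\|\Delta z\|^2$), yields precisely (\ref{covariancestructure}). Uniformity of the remainder over $x,\wt x\in\mathcal{M}_h$, $z,\wt z\in\mathbb{S}^{d-r-1}$ and $h\in(0,h_0]$ follows from the uniform boundedness and uniform continuity of $f$, $M$, $Q$, the eigenvalue gaps (Lemma~\ref{lambda2bound}(iii)), and the derivatives of $K$ on the relevant compact sets, together with the fact that $\mathcal{M}_h\subset\mathcal{N}_{\delta_0}(\mathcal{M})$ for small $h$. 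I expect the main obstacle to be the bookkeeping in the joint expansion: one must keep the $\Delta z$ and $\Delta x/h$ expansions to the right order, verify that the first-order terms in $\Delta x/h$ genuinely cancel (the antisymmetry-plus-integration-by-parts argument) and that the mixed $\Delta x\,\Delta z$ cross terms are $o(\|\Delta x/h\|^2+\|\Delta z\|^2)$ rather than contributing at leading order — this is where the interplay between the pointwise normalization built into $Q(x)$ and the $x$-dependence of $A(x,z)$ must be tracked carefully, and where one uses $\Delta x/h\to 0$ (not merely $\Delta x\to 0$) to control the cross terms of size $\|\Delta x\|\cdot\|\Delta z\|$ and the $O(\|\Delta x\|)$ corrections relative to the $h^{-2}\|\Delta x\|^2$ term.
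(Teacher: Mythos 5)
Your proposed route is a one-sided Taylor expansion: fix the base point at $(x,z)$, expand the kernel cross-integral in $\eta=\Delta x/h$ and expand $A(\wt x,\wt z)$ around $A(x,z)$, then divide by variances normalized to $1+O(h)$. The paper instead uses a \emph{symmetric midpoint} expansion, writing $A_h^{\pm}(u)=\xi(u)\pm\xi^{(1)}(u)+\xi^{(2)}(u)+o$ with $\xi$ evaluated at $(\check x,\check z)=((x+\wt x)/2,(z+\wt z)/2)$, so that all odd-order terms cancel identically in $\int A_h^-A_h^+$ and enter $\mathrm{Var}_1\mathrm{Var}_2$ with opposite signs, leaving the clean formula $r_h = 1 + 2M_1^2/M_0^2 - 2M_2/M_0 + o$ in which the first-order term $M_1$ appears only squared. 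These are genuinely different decompositions, and your one-sided version can in principle work, but your treatment of it has a real gap.

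The gap is in the handling of the first-order $\Delta x$-term coming from the $x$-dependence of $A$. Since $f(x)A(x,z)^T\mathbf{R}A(x,z)\equiv 1$, differentiating gives $f(x)A(x,z)^T\mathbf{R}\,\partial_xA(x,z)\,\Delta x = -\tfrac{1}{2}\nabla f(x)^T\Delta x/f(x)$, which is generically nonzero and of size $O(\|\Delta x\|)=O\!\big(h\|\Delta x/h\|\big)$. You propose to absorb this ``$O(\|\Delta x\|)$ correction'' by comparing it to the $h^{-2}\|\Delta x\|^2$ term; but that ratio is $h^2/\|\Delta x\|$, which tends to $0$ only if $\|\Delta x\|\gg h^2$, and the hypothesis $\Delta x/h\to 0$ permits $\|\Delta x\|\ll h^2$. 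So this term is \emph{not} $o(\|\Delta x/h\|^2+\|\Delta z\|^2)$. What actually kills it is a nontrivial cancellation with an equal-and-opposite contribution coming from the cross term between the $\eta$-expansion of $d^2K(u+\eta)$ and the $-h\nabla f(x)^Tu$ term in the Taylor expansion of $f(x-hu)$ (using the parity identity $\int u\,\xi(u)\,\xi_u(u)^T du = -\tfrac{1}{2f(x)}\mathbf{I}_d$); equivalently, one must track that $\mathrm{Var}(g_{\wt x,\wt z})$ carries its own $\Delta$-dependent first-order correction $2N_1/N_0$, which makes the ratio $r_h = (1+\alpha+\gamma)/\sqrt{1+2\alpha+\beta+2\gamma}$ depend on the first-order piece $\alpha$ only through $\alpha^2$. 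Your claim that the variances are simply ``$1+O(h)$, absorbed into the $o$-term'' loses exactly this mechanism. This is why the paper symmetrizes at the midpoint: the cancellation then falls out automatically. One minor further imprecision: you justify the vanishing of the first-order kernel term via antisymmetry of $\int d^2K(u)\,\partial_k d^2K(u)^T\,du$ contracted with $A(x,z)A(x,z)^T$, but the actual contraction partner is $A(x,z)A(\wt x,\wt z)^T$, which is not symmetric; the term vanishes anyway because under ({\bf K1}) the matrix $\int d^2K\,\partial_kd^2K^T\,du$ is identically zero by the parity of a degree-$5$ product of derivatives of the spherically symmetric $K$, which is the stronger fact you actually need.
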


\begin{remark}\label{dr1}
{\em
When $d-r=1$, we have $z,\wt z\in\{1,-1\}$ and $\Delta z\equiv0$, and then (\ref{covariancestructure}) should be understood as
\begin{align}\label{covariancestructure2}
r_h(x,x+\Delta x,z,z+\Delta z) = 1 - \frac{1}{2h^2} \Delta x^T\Omega(x,1)\Delta x    + o\left(\left\| \frac{\Delta x}{h}\right\|^2 \right).
\end{align}
}
\end{remark}


To construct a confidence region for $\mathcal{M}_h$, we will use the distribution of $\sup_{g\in\mathcal{F}_h}\mathbb{B}(g)$, which is also the extreme value distribution of $\chi$-fields indexed by manifolds, as indicated in Remark~\ref{smoothness}(ii). The distribution depends on the geometry of the manifolds, through a surface integral on the manifolds specifically defined below, which is originated from Theorem~\ref{ProbMain}.  For a differentiable submanifold $\mathcal{S}$ of $\mathbb{R}^d$, at each $u\in\mathcal{S},$ let $T_u\mathcal{S}$ denote the tangent space at $u$ to $\mathcal{S}$. Let $\Lambda(T_u\mathcal{S})$ be a matrix with orthonormal columns that span $T_u\mathcal{S}$. For an  $n \times r$ matrix $M$ with $ r \le n,$ we denote by $\|M\|^2_r$ the sum of squares of all minor determinants of order $r$. For any nice (meaning the following is well-defined) set $\mathcal{A}\subset\mathcal{H}$, define 
\begin{align*}
c_h^{(d,r)}(\mathcal{A}) = \log\left\{ \frac{r^{(d-2)/2}}{2\pi^{d/2}} \int_{\mathbb{S}^{d-r-1}}  \int_{\mathcal{M}_h\cap \mathcal{A}} \|\Omega(x,z)^{1/2} \Lambda({T_x\mathcal{M}_h})\|_r d\mathscr{H}_r(x)d\mathscr{H}_{d-r-1}(z) \right\}.
\end{align*}
For simplicity we write $c_h^{(d,r)} = c_h^{(d,r)}(\mathcal{H}) $. 
%
%
For $z,c\in\mathbb{R}$, let
\begin{align}\label{bhz}
b_h(z,c) = \frac{z}{\sqrt{2r\log{(h^{-1})}}} +  \sqrt{2r\log(h^{-1})} + \frac{1}{\sqrt{2r\log(h^{-1})}} \left[  \frac{d-2}{2} \log\log(h^{-1}) +c \right].
\end{align}
For any $\alpha\in(0,1)$, let $z_\alpha = - \log[-\log(1-\alpha)]$ so that $e^{-e^{-z_\alpha}}=1-\alpha$. The following theorem gives an asymptotic confidence region for $\mathcal{M}_h$.

\begin{theorem}\label{confidenceregion}
Under assumptions ({\bf F1}) - ({\bf F5}), ({\bf K1}) - ({\bf K3}), as $\gamma_{n,h}^{(2)}\log{n}\rightarrow0$ and $h\rightarrow0$, we have
\begin{align}\label{dnxextreme}
\lim_{h\rightarrow 0} \mathbb{P}\left(\sup_{g\in\mathcal{F}_h}\mathbb{B}(g)< b_h(z,c_h^{(d,r)}) \right) = e^{-e^{-z}}.
\end{align}
This implies that for any $\alpha\in(0,1)$, as $n\rightarrow\infty$,
\begin{align}\label{asympconf}
\mathbb{P}\left(\mathcal{M}_h \subset \wh C_{n,h}( b_h(z_\alpha,c_h^{(d,r)}))\right) \rightarrow 1-\alpha, 
\end{align}
where $\wh C_{n,h}$ is defined in (\ref{hatCnh}). 

\end{theorem}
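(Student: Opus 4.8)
The plan is to first establish (\ref{dnxextreme}), which is a statement purely about the Gaussian process $\mathbb{B}$ and the geometry of the manifold $\mathcal{M}_h$, and then to deduce (\ref{asympconf}) by stitching together Propositions \ref{pointwisenormal}, \ref{ridgenessest}, Theorem \ref{gassianapprox}, and (\ref{dnxextreme}).

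For (\ref{dnxextreme}): by Remark \ref{smoothness}(ii) we have $\sup_{g\in\mathcal{F}_h}\mathbb{B}(g)=\sup_{x\in\mathcal{M}_h}\|S_h(x)\|$, so the left-hand side of (\ref{dnxextreme}) is the maximum of a nonstationary $\chi_{d-r}$-field with cross-dependent coordinates indexed by the $r$-dimensional manifold $\mathcal{M}_h$; equivalently, writing $\|S_h(x)\|=\sup_{z\in\mathbb{S}^{d-r-1}}z^TS_h(x)$, it is the maximum of a unit-variance Gaussian field over the $(d-1)$-dimensional manifold $\mathcal{M}_h\times\mathbb{S}^{d-r-1}$. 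I would then verify the hypotheses of the companion extreme-value theorem, Theorem \ref{ProbMain}. The crucial input is the local covariance expansion from Proposition \ref{covstructure}, namely $r_h=1-\tfrac12\|\Delta z\|^2-\tfrac1{2h^2}\Delta x^T\Omega(x,z)\Delta x+o(\cdot)$ (with Remark \ref{dr1} covering $d-r=1$): this is exactly the locally stationary form required, with correlation length of order $h$ along $\mathcal{M}_h$ and of order $1$ along the sphere. The remaining conditions to check are (a) uniform positive definiteness of $\Omega(x,z)$ over $x\in\mathcal{M}_h$, $z\in\mathbb{S}^{d-r-1}$, $h\in(0,h_0]$, which follows from $\mathbf{R}\succ0$ (guaranteed by (K2), see Remark \ref{discuss}(iv)) together with $A(x,z)=M(x)Q(x)z\ne0$ and the full rank of $M(x)$ near $\mathcal{M}$ (Proposition \ref{pointwisenormal}); (b) smoothness of $\mathcal{M}_h$ and the uniform lower bound on its reach, supplied by Lemma \ref{lambda2bound}(iv), together with boundedness, above and away from $0$, of the geometric integrand $\|\Omega(x,z)^{1/2}\Lambda(T_x\mathcal{M}_h)\|_r$; and (c) a non-degeneracy estimate $\sup\{r_h:(x,z),(\tilde x,\tilde z)\ \text{not}\ \epsilon\text{-close}\}<1$ off the diagonal, which I would obtain from a compactness argument using the bounded support of $K$ and $\mathbf{R}\succ0$. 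Granting these, Theorem \ref{ProbMain} gives $\mathbb{P}(\sup_{x\in\mathcal{M}_h}\|S_h(x)\|<u_h)\to e^{-e^{-z}}$ as soon as the level $u_h$ is chosen so that the associated expected-Euler-characteristic-type tail equals $e^{-z}$; carrying out the asymptotic inversion of that relation—using $u_h^2\sim 2r\log(h^{-1})$ at leading order and expanding the polynomial and $\log\log$ corrections—identifies $u_h=b_h(z,c_h^{(d,r)})$ exactly, the additive constant $c_h^{(d,r)}$ collecting the surface integral $\tfrac{r^{(d-2)/2}}{2\pi^{d/2}}\int_{\mathbb{S}^{d-r-1}}\int_{\mathcal{M}_h}\|\Omega(x,z)^{1/2}\Lambda(T_x\mathcal{M}_h)\|_r\,d\mathscr{H}_r\,d\mathscr{H}_{d-r-1}$.

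For (\ref{asympconf}): the event $\mathcal{M}_h\subset\wh C_{n,h}(a_n)$ is, by (\ref{hatCnh}), the intersection of $\{\sqrt{nh^{d+4}}\sup_{x\in\mathcal{M}_h}B_n(x)\le a_n\}$ and $\{\sup_{x\in\mathcal{M}_h}\wh\lambda_{r+1}(x)<0\}$. The second event has probability tending to $1$: $\lambda_{r+1,h}$ is bounded away from $0$ on $\mathcal{M}_h$ by Lemma \ref{lambda2bound}(iii), and $\wh\lambda_{r+1}$ is uniformly consistent for it (Lemma \ref{lambda2}, see Remark \ref{zerob}), so eventually $\wh\lambda_{r+1}<0$ on $\mathcal{M}_h$ with high probability. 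Hence the coverage probability equals $\mathbb{P}(\sqrt{nh^{d+4}}\sup_{x\in\mathcal{M}_h}B_n(x)\le a_n)+o(1)$. By (\ref{ridgenessest2}) of Proposition \ref{ridgenessest}, $\sqrt{nh^{d+4}}\sup_{x\in\mathcal{M}_h}B_n(x)=\sqrt{nh^{d+4}}\sup_{x\in\mathcal{M}_h}D_n(x)+O_p\big(\sqrt{nh^{d+4}}[(\gamma_{n,h}^{(2)})^2+\gamma_{n,h}^{(1)}]\big)$, and a direct computation shows the remainder is $O_p\big((\log n)^{3/2}(nh^{d+4})^{-1/2}+h\sqrt{\log n}\big)$, which is $o_p\big((\log(h^{-1}))^{-1/2}\big)$ under the stated bandwidth conditions; since, by (\ref{bhz}), $b_h(z,c)$ is Lipschitz in $z$ with constant $(2r\log(h^{-1}))^{-1/2}$ and the limit law $e^{-e^{-z}}$ is continuous, this remainder is asymptotically negligible for the coverage probability. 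Combining this with Theorem \ref{gassianapprox} (which replaces $\sqrt{nh^{d+4}}\sup_{x\in\mathcal{M}_h}D_n$ by $\sup_{g\in\mathcal{F}_h}\mathbb{B}$ uniformly in the level) and with (\ref{dnxextreme}) evaluated at $z=z_\alpha$, for which $e^{-e^{-z_\alpha}}=1-\alpha$, yields $\mathbb{P}(\sqrt{nh^{d+4}}\sup_{x\in\mathcal{M}_h}B_n(x)\le b_h(z_\alpha,c_h^{(d,r)}))\to 1-\alpha$, which is (\ref{asympconf}).

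The main obstacle is part (\ref{dnxextreme}): matching the output of the companion $\chi$-field extreme-value theorem to the explicit normalizer $b_h(z,c_h^{(d,r)})$. This entails (i) confirming that the cross-dependence among the coordinates $S_{1,h},\dots,S_{d-r,h}$ is genuinely accommodated by Theorem \ref{ProbMain} and enters the constant $c_h^{(d,r)}$ in the form written above (rather than the independent-coordinate constant of Piterbarg, 1994); (ii) pushing the asymptotic expansion of $u_h(z)$ to order $(\log(h^{-1}))^{-1/2}$, which is precisely what is needed to pin down both the $\tfrac{d-2}{2}\log\log(h^{-1})$ term and the additive constant in (\ref{bhz}); and (iii) obtaining all estimates uniformly in $h$: since $\mathcal{M}_h$, and hence the integrand defining $c_h^{(d,r)}$, varies with $h$, one must either invoke a version of Theorem \ref{ProbMain} uniform over manifolds with uniformly bounded reach and uniformly controlled geometry, or first show that $\mathcal{M}_h$ approximates $\mathcal{M}$ strongly enough for these quantities to stabilize.
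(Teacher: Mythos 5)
Your overall strategy is the same as the paper's: establish (\ref{dnxextreme}) by verifying the hypotheses of the companion extreme-value theorem (Theorem~\ref{ProbMain}) using Proposition~\ref{covstructure}, and then obtain (\ref{asympconf}) by chaining together the $\wh\lambda_{r+1}<0$ argument (Remark~\ref{zerob} / Lemma~\ref{lambda2bound}), Proposition~\ref{ridgenessest}, Theorem~\ref{gassianapprox}, and the extreme-value result. That pipeline is correct and is exactly the route the paper takes.

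However, there is a genuine gap in your step (a). You assert that uniform positive definiteness of $\Omega(x,z)=\int_{\mathbb{R}^d}\nabla d^2K(u)^T A(x,z)A(x,z)^T\nabla d^2K(u)\,du$ ``follows from $\mathbf{R}\succ0$ together with $A(x,z)\neq 0$ and the full rank of $M(x)$.'' This does not follow. $\mathbf{R}=\mathbf{R}(d^2K)$ controls linear independence of the \emph{second} derivatives of $K$, whereas $\Omega$ is built from the \emph{third} derivatives through $\nabla d^2K$. Positive definiteness of $\Omega(x,z)$ requires that for every nonzero $w\in\mathbb{R}^d$ the scalar function $u\mapsto A(x,z)^T\nabla d^2K(u)\,w$ is not identically zero; since $A(x,z)A(x,z)^T$ is rank one, this is a nontrivial constraint on the correlations among third-order kernel derivatives, and indeed fails to hold in general. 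The paper devotes all of Lemma~\ref{posidefi} to this point: it reduces the question to positive definiteness of a matrix $P(x,z)$ built from the ratios $a_K$ and $b_K$, produces a decomposition $P=LL^T+S$ with $S=(a_K-1/b_K)\,\mathrm{diag}(t_1^2,\dots,t_d^2)$, and uses assumption ({\bf K3}) ($a_K>1$ for $d=2$, $a_Kb_K>1$ for $d\ge 3$) in an essential way. You list ({\bf K3}) among the standing assumptions, but your argument never actually uses it, and without it your claim (a) would be false. Your proof as written would therefore not go through.

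Two smaller omissions worth noting. First, the paper proves the extreme-value statement for the \emph{standardized} process $\sigma_{g}^{-1}\mathbb{B}(g)$ and then transfers to $\mathbb{B}(g)$ using $\sigma_g^2=1+O(h^2)$ together with $\sup_g|\sigma_g^{-1}\mathbb{B}(g)|=O_p(\sqrt{\log h^{-1}})$; this transfer step is needed because $\mathbb{B}$ is not exactly unit-variance, and you skip it. Second, when $d-r=1$ the sphere factor $\mathbb{S}^{0}=\{-1,1\}$ is zero-dimensional, and the paper switches from Theorem~\ref{ProbMain} to the result of Qiao and Polonik (2018); your proposal applies Theorem~\ref{ProbMain} uniformly without flagging this case.
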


\begin{remark}
{\em We discuss the quantity $c_h^{(d,r)}$. When $d-r = 1$, we have $\mathbb{S}^0=\{-1,1\}$ and $\mathscr{H}_0$ is the counting measure, and so
\begin{align*}
c_h^{(d,r)} = \log\left\{ \frac{r^{(d-2)/2}}{\pi^{d/2}}  \int_{\mathcal{M}_h} \|\Omega(x,1)^{1/2} \Lambda(T_x\mathcal{M}_h)\|_r d\mathscr{H}_r(x) \right\} .
\end{align*}
When $d-r \geq 2$, for any $x\in\mathcal{M}_h$, $ \int_{\mathbb{S}^{d-r-1}} \|\Omega(x,z)^{1/2} \Lambda(T_x\mathcal{M}_h)\|_r d\mathscr{H}_{d-r-1}(z)$ is a hyperelliptic integral. Note the cross dependence in the Gaussian fields discussed in Remark~\ref{smoothness}(ii) is also reflected in $c_h^{(d,r)}$, where the integrals on $\mathcal{M}_h$ and $\mathbb{S}^{d-r-1}$ are not independent. } 
\end{remark}

The confidence regions for $\mathcal{M}_h$ given in (\ref{asympconf}) is a theoretical result depending on the unknown quantity $c_h^{(d,r)}$. In what follows we address a few important questions: (i) confidence regions for $\mathcal{M}$ by correcting the bias (Section~\ref{biascorrect}); (ii) data-driven confidence regions for $\mathcal{M}_h$ and $\mathcal{M}$ by consistently estimating $c_h^{(d,r)}$ (Section~\ref{underconfidenceregion}); (iii) different choices of $b_n$ and modified confidence regions for $\mathcal{M}_h$ and $\mathcal{M}$ when assumption ({\bf F5}) is relaxed (Section~\ref{Generalization}).

\subsection{Asymptotic confidence regions for $\mathcal{M}$}\label{biascorrect} 
We consider asymptotic confidence regions for $\mathcal{M}$ in this section. The difference between $\mathcal{M}$ and $\mathcal{M}_h$ is attributed to the bias in kernel type estimation. In Section~\ref{mhconfden} we focused on $\mathcal{M}_h$ by only considering the stochastic variation $B_n$, which is of order $O_p(\gamma_{n,h}^{(2)})$. As we show in Lemma~\ref{biasasymptotic} below, the bias part in ridge estimation is of order $O(h^2)$. Usually there are two approaches to deal with the bias in kernel type estimation: implicit bias correction using undersmoothing bandwidth and explicit bias correction (see e.g. Hall, 1992). The former makes the bias asymptotically negligible compared with the stochastic variation in the estimation, while the latter directly debiases the estimator by estimating the higher order derivatives in the leading terms of the bias using additional kernel estimation, which also means that the latter usually requires stronger assumptions on the smoothness of the underlying functions (see e.g. Xia, 1998). We use both methods to construct asymptotic confidence regions for $\mathcal{M}$.

The next lemma gives the asymptotic form of the bias in ridge estimation. Let $\mu_K = \int_{\mathbb{R}^d} s_1^2 K(s)ds$, where $s=(s_1,\cdots,s_d)^T$. Let $\Delta_L$ be the Laplacian operator, that is $\Delta_L \xi(x)=\sum_{i=1}^d\frac{\partial^2 \xi(x) }{\partial x_i^2}$, for a twice differentiable function $\xi$ on $\mathbb{R}^d$. If $\xi$ is a vector-valued function, then $\Delta_L$ applies to each element of $\xi$.

\begin{lemma}\label{biasasymptotic}
Under assumptions ({\bf F1}) - ({\bf F4}) and ({\bf K1}), as $h\rightarrow0$, 
\begin{align*}
V_h(x)^T\nabla f_h(x) - V(x)^T\nabla f(x) 
= \frac{1}{2}h^2\mu_K \beta(x) + R_h.
\end{align*}
where $\beta(x) = \{M(x)^T[\Delta_Ld^2 f(x)]\nabla f(x) + V(x)^T [\Delta_L\nabla f(x)]\}$ and $R_h=o(h^2),$ uniformly in $x\in\mathcal{N}_{\delta_0}(\mathcal{M})$. When both $f$ and $K$ are six times continuously differentiable, we have $R_h=O(h^4)$, uniformly in $x\in\mathcal{N}_{\delta_0}(\mathcal{M})$. 
\end{lemma}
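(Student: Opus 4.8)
The plan is to compute the bias $V_h(x)^T\nabla f_h(x) - V(x)^T\nabla f(x)$ by treating it as a smooth function of the pair $(\nabla f_h(x), \nabla^2 f_h(x))$ and expanding around $(\nabla f(x), \nabla^2 f(x))$. The standard kernel bias expansion under (\textbf{F1}) and (\textbf{K1}) (spherical symmetry, compact support, enough derivatives) gives $f_h^{(\gamma)}(x) - f^{(\gamma)}(x) = \tfrac{1}{2}h^2\mu_K \Delta_L f^{(\gamma)}(x) + o(h^2)$ uniformly on $\mathcal{N}_{\delta_0}(\mathcal{M})$ for $|\gamma|\le 3$ (and $O(h^4)$ when $f$ and $K$ have two more derivatives, since the next term in the even Taylor expansion is order $h^4$). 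Applying this to the components of $\nabla f$ gives $\nabla f_h(x) - \nabla f(x) = \tfrac12 h^2 \mu_K \Delta_L\nabla f(x) + o(h^2)$, and to the components of $\nabla^2 f$ gives $\nabla^2 f_h(x) - \nabla^2 f(x) = \tfrac12 h^2\mu_K \Delta_L\nabla^2 f(x) + o(h^2)$, equivalently $d^2 f_h(x) - d^2 f(x) = \tfrac12 h^2\mu_K \Delta_L d^2 f(x) + o(h^2)$.

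Next I would perturbation-expand the map $(\text{gradient},\text{Hessian}) \mapsto V^T\nabla f$. This is exactly the deterministic analogue of the linearization already carried out for the stochastic term in Proposition~\ref{pointwisenormal}: the eigenvectors $v_{r+1},\dots,v_d$ of the Hessian are, to first order, perturbed by a term built from the off-diagonal blocks of the Hessian perturbation and the eigenvalue gaps (using (\textbf{F4}) to guarantee simple eigenvalues and nonzero gaps on $\mathcal{N}_{\delta_0}(\mathcal{M})$, so the eigenprojections are smooth), which contributes the $M(x)^T[\Delta_L d^2 f(x)]\nabla f(x)$ piece after pairing against $\nabla f(x)$ and using the definition of $m_i(x)$ in (\ref{pseudoinv}); meanwhile $V(x)^T$ acting on the gradient perturbation contributes $V(x)^T[\Delta_L\nabla f(x)]$. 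Collecting, $V_h(x)^T\nabla f_h(x) - V(x)^T\nabla f(x) = \tfrac12 h^2\mu_K\{M(x)^T[\Delta_L d^2 f(x)]\nabla f(x) + V(x)^T[\Delta_L\nabla f(x)]\} + R_h$, which is the claimed $\beta(x)$. The remainder $R_h$ gathers (a) the $o(h^2)$ (resp. $O(h^4)$) terms from the kernel expansion and (b) the second-order Remainder of the eigen-perturbation, which is quadratic in the Hessian perturbation and hence $O(h^4)$; so $R_h = o(h^2)$ in general and $R_h = O(h^4)$ under the extra smoothness, uniformly on $\mathcal{N}_{\delta_0}(\mathcal{M})$.

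The main obstacle is making the eigenvector perturbation expansion rigorous and \emph{uniform} over $\mathcal{N}_{\delta_0}(\mathcal{M})$, including controlling the second-order remainder. The cleanest route is to avoid expanding individual eigenvectors and instead work with the spectral projection $\Pi(x)$ onto the span of the bottom $d-r$ eigenvectors (or, even better, directly with the quantity $V^T\nabla f = \sum_i v_i v_i^T\nabla f$ reassembled through the resolvent), using that under (\textbf{F4}) the relevant eigenvalue gaps are bounded away from zero on the compact set $\mathcal{N}_{\delta_0}(\mathcal{M})$, so the resolvent and the Riesz-projection contour integrals are uniformly bounded and the Neumann series for the perturbed projection converges with explicit quadratic tail. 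One must also note that $\beta(x)$ does not depend on the chosen orientations of the $v_i$ since $V^T\nabla f$ and $m_i$ transform consistently under sign flips; this legitimizes the orientation convention fixed earlier. Once uniform bounds on the resolvent are in hand, everything else is the routine bookkeeping sketched above, and the $O(h^4)$ refinement follows because both error sources are even-order in $h$.
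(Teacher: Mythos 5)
Your plan is essentially the paper's proof: you use the same kernel bias expansion $d^2 f_h - d^2 f = \tfrac12 h^2\mu_K\Delta_L d^2 f + o(h^2)$ and $\nabla f_h - \nabla f = \tfrac12 h^2\mu_K\Delta_L \nabla f + o(h^2)$ (refined to $O(h^4)$ under the extra smoothness), combine it with the linearization $[V_h - V]^T\nabla f \approx M(x)^T(d^2 f_h - d^2 f)$, which is the deterministic analogue of (\ref{eigenvecdiff}) in Proposition~\ref{pointwisenormal}, and invoke compactness of $\mathcal{N}_{\delta_0}(\mathcal{M})$ together with the uniform eigengap from ({\bf F4}) for uniformity; the quadratic remainder of the eigenvector map is $O(h^4)$, matching the paper's $R_{h1}$. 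One small correction to the alternative you float: $V^T\nabla f$ is the $(d-r)$-vector with entries $v_i^T\nabla f$, not $\sum_i v_i v_i^T\nabla f$ --- the latter is the $d$-vector $\Pi\nabla f$ --- so a resolvent/Riesz-projection expansion of $\Pi$ would need an extra translation step back to the $(d-r)$-vector $V^T\nabla f$, whereas the paper sidesteps this by differentiating the eigenvector map $G_i$ directly via the Magnus--Neudecker formula (and using Dunajeva (2004) for the second-order remainder).
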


Undersmoothing requires to use a small bandwidth $h$ such that $\gamma_{n,h}^{(4)}\rightarrow\infty$. One can also explicitly correct the bias by using a debiased estimator. For a bandwidth $l>0$, let
\begin{align*}
\wh \beta_{n,l}(x) = \{\wh M_{n,l}(x)^T[\Delta_Ld^2 \wh f_{n,l}(x)]\nabla \wh f_{n,l}(x) + \wh V_{n,l}(x)^T [\Delta_L\nabla \wh f_{n,l}(x)]\}, 
\end{align*}
where we have brought the subscripts $n,l$ back to the kernel estimators to show their dependence on a different bandwidth $l$. For $a_n\geq0$ and $b_n\in\mathbb{R}$, let
\begin{align}\label{hatCnhbc}
&\wh C_{n,h,l}^{\text{bc}}(a_n,b_n) \nonumber\\
=&\left\{x\in\mathcal{H}:\;   \sqrt{nh^{d+4}} \left\| Q_n(x)\left[\wh V(x)^T \nabla \wh f(x) -\frac{1}{2}h^2\mu_K\wh \beta_{n,l}(x) \right] \right\| \leq a_n, \text{and}\; \wh\lambda_{r+1}(x) <b_n \right\},
\end{align}
and denote $\wh C_{n,h,l}^{\text{bc}}(a_n)=\wh C_{n,h,l}^{\text{bc}}(a_n,0)$ for simplicity. Define
\begin{align*}
c^{(d,r)} = \log\left\{ \frac{r^{(d-2)/2}}{2\pi^{d/2}} \int_{\mathbb{S}^{d-r-1}}  \int_{\mathcal{M}} \|\Omega(x,z)^{1/2} \Lambda(T_x\mathcal{M})\|_r d\mathscr{H}_r(x)d\mathscr{H}_{d-r-1}(z) \right\},
%
\end{align*}
where we simply replace the domain of integration $\mathcal{M}_h$ by $\mathcal{M}$ in $c_h^{(d,r)}$.

\begin{theorem}\label{underconfidenceregion}
Suppose assumptions ({\bf F1}) - ({\bf F5}), ({\bf K1}) - ({\bf K3}) hold and $\gamma_{n,h}^{(2)}\log{n}\rightarrow0$ and $h\rightarrow0$, for any $\alpha\in(0,1)$ we have the following.\\
(i) {\em Undersmoothing}: As $\gamma_{n,h}^{(4)}\rightarrow\infty$,
\begin{align}\label{asympconfbias}
\mathbb{P}\left(\mathcal{M} \subset \wh C_{n,h}( b_h(z_\alpha,c^{(d,r)}))\right) \rightarrow 1-\alpha.
\end{align}
(ii) {\em Explicit bias correction}: Assume both $f$ and $K$ are six times continuously differentiable. As $h/l\rightarrow0$ and $\gamma_{n,h}^{(4)}/l^2\rightarrow\infty$,
\begin{align}\label{asympconfbiascorr}
\mathbb{P}\left(\mathcal{M} \subset \wh C_{n,h,l}^\text{bc}( b_h(z_\alpha,c^{(d,r)}))\right) \rightarrow 1-\alpha.
\end{align}
\end{theorem}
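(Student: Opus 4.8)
The plan is to reduce both parts to the extreme-value limit of Theorem~\ref{confidenceregion}, re-indexed by the true ridge $\mathcal{M}$, and then to kill the estimation bias using the stated bandwidth conditions.

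First, note that $\{\mathcal{M}\subset\wh C_{n,h}(a_n)\}$ is the intersection of $\{\sqrt{nh^{d+4}}\sup_{x\in\mathcal{M}}B_n(x)\le a_n\}$ with $\{\sup_{x\in\mathcal{M}}\wh\lambda_{r+1}(x)<0\}$ (and analogously for $\wh C^{\text{bc}}_{n,h,l}$). The second event has probability tending to one: by (\ref{condition2}) and (\textbf{F3}), $\lambda_{r+1}<0$ on the compact set $\mathcal{M}$, hence $\sup_{\mathcal{M}}\lambda_{r+1}<0$, while $\sup_{\mathcal{H}}|\wh\lambda_{r+1}-\lambda_{r+1}|\le\sup_{\mathcal{H}}|\wh\lambda_{r+1}-\lambda_{r+1,h}|+\sup_{\mathcal{H}}|\lambda_{r+1,h}-\lambda_{r+1}|=o_p(1)$ by Lemma~\ref{lambda2}/Remark~\ref{zerob} and continuity of the smoothing. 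So it remains to study $\sqrt{nh^{d+4}}\sup_{x\in\mathcal{M}}B_n(x)$.

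Second, I would prove the $\mathcal{M}$-analogue of Theorem~\ref{confidenceregion}, namely $\lim_{h\to0}\mathbb{P}(\sqrt{nh^{d+4}}\sup_{x\in\mathcal{M}}D_n(x)<b_h(z,c^{(d,r)}))=e^{-e^{-z}}$, by re-running the pipeline of Section~\ref{mhconfden} with the index manifold $\mathcal{M}_h$ replaced by $\mathcal{M}$. The identity $\sqrt{nh^{d+4}}D_n(x)=\sup_{z\in\mathbb{S}^{d-r-1}}\mathbb{G}_n(g_{x,z})$ is algebraic and holds for every $x$; Theorem~\ref{gassianapprox} depends on $\mathcal{F}_h$ only through its (unchanged) complexity; Proposition~\ref{covstructure} is proved by Taylor expansion about a base point and transfers verbatim; and the $\chi$-field extreme-value theorem (Theorem~\ref{ProbMain}) applies because, by Lemma~\ref{lambda2bound}(i), $\mathcal{M}$ is a compact $r$-dimensional manifold with positive reach lying in $\mathcal{N}_{\delta_0}(\mathcal{M})$, where $M(\cdot)$, $Q(\cdot)$, $\Omega(\cdot,\cdot)$ and the tangent frames $\Lambda(T_{\cdot}\mathcal{M})$ are smooth and nondegenerate under (\textbf{F1})--(\textbf{F4}), (\textbf{K1})--(\textbf{K3}); the resulting geometric constant is exactly $c^{(d,r)}$. (One could instead stay on $\mathcal{M}_h$ and transfer using $d_H(\mathcal{M},\mathcal{M}_h)=O(h^2)$ and $c_h^{(d,r)}\to c^{(d,r)}$, but re-indexing avoids comparing suprema over two different manifolds.)

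Third is the bias bookkeeping. For $x\in\mathcal{M}$ we have $V(x)^T\nabla f(x)=0$, so writing $\wh V(x)^T\nabla\wh f(x)=[\wh V(x)^T\nabla\wh f(x)-V_h(x)^T\nabla f_h(x)]+V_h(x)^T\nabla f_h(x)$, applying (\ref{uniformapp}) (whose proof only needs $x\in\mathcal{N}_{\delta_0}(\mathcal{M})$) to the first bracket and Lemma~\ref{biasasymptotic} to the second, and then following the argument of Proposition~\ref{ridgenessest} over $\mathcal{M}$ (using $\|Q_n-Q\|_\infty=O_p(h^2+\gamma_{n,h}^{(2)})$), I get
\[
\Bigl|\sqrt{nh^{d+4}}\sup_{x\in\mathcal{M}}B_n(x)-\sqrt{nh^{d+4}}\sup_{x\in\mathcal{M}}D_n(x)\Bigr|=O_p\bigl(\gamma_{n,h}^{(2)}\sqrt{\log n}+h\sqrt{\log n}\bigr)+O_p\bigl(\sqrt{nh^{d+8}}\bigr),
\]
where the last term is the scaled bias $\tfrac12\sqrt{nh^{d+4}}\,h^2\mu_K\|\beta\|_\infty$. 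The first $O_p$-term is $o_p((\log h^{-1})^{-1/2})$ under $\gamma_{n,h}^{(2)}\log n\to0$, exactly as in the proof of Theorem~\ref{confidenceregion}. Under undersmoothing, $\gamma_{n,h}^{(4)}\to\infty$ forces $h$ small enough that $\sqrt{nh^{d+8}}$ is negligible at the resolution $(\log h^{-1})^{-1/2}$ of the Gumbel limit, so $\sqrt{nh^{d+4}}\sup_{\mathcal{M}}B_n$ has the same asymptotic distribution as $\sqrt{nh^{d+4}}\sup_{\mathcal{M}}D_n$; combined with the second step and evaluated at $z=z_\alpha$ this gives (\ref{asympconfbias}). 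For explicit correction, the debiased statistic replaces $V_h(x)^T\nabla f_h(x)$ by the plug-in estimate $\tfrac12 h^2\mu_K\wh\beta_{n,l}(x)$, so the residual on $\mathcal{M}$ is $\tfrac12 h^2\mu_K[\beta(x)-\wh\beta_{n,l}(x)]+R_h$ with $R_h=O(h^4)$ (valid since $f,K\in C^6$, by Lemma~\ref{biasasymptotic}); using the uniform consistency $\|\wh\beta_{n,l}-\beta\|_\infty=O_p(l^2+\gamma_{n,l}^{(4)})$ (which follows from consistency of the plug-in fourth-derivative functional at bandwidth $l$ together with the eigenvector and gradient estimates), the scaled residual is $O_p(\sqrt{nh^{d+8}}(l^2+\gamma_{n,l}^{(4)})+\sqrt{nh^{d+12}})$, and $h/l\to0$ (so that $\sqrt{nh^{d+8}}\gamma_{n,l}^{(4)}=\sqrt{\log n}\,(h/l)^{(d+8)/2}$) together with $\gamma_{n,h}^{(4)}/l^2\to\infty$ (controlling $\sqrt{nh^{d+8}}\,l^2$, hence also $\sqrt{nh^{d+12}}$) make each summand negligible, yielding (\ref{asympconfbiascorr}).

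The routine-but-lengthy part is the second step: re-checking that every ingredient of Section~\ref{mhconfden} still applies with index set $\mathcal{M}$, most delicately that $\mathcal{M}$ has sufficient regularity (smoothness, positive reach, nondegeneracy of $\Omega(x,z)$ restricted to $T_x\mathcal{M}$) for the $\chi$-field extreme-value theorem --- this is precisely where Lemma~\ref{lambda2bound}(i) and (\textbf{F1})--(\textbf{F4}) enter. The genuinely delicate quantitative point --- and the one the bandwidth conditions are designed for --- is controlling the (residual) bias uniformly on $\mathcal{M}$ against the $(\log h^{-1})^{-1/2}$ precision of the extreme-value limit, so that an $O(h^2)$ bias term does not cause asymptotic under-coverage.
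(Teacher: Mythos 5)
Your proposal is correct and follows essentially the same route as the paper: on $\mathcal{M}$ the defining identity $V(x)^T\nabla f(x)=0$ is used to split $\wh V^T\nabla\wh f$ into a stochastic term plus the $O(h^2)$ bias from Lemma~\ref{biasasymptotic}, the extreme-value pipeline of Section~\ref{mhconfden} is re-indexed by $\mathcal{M}$ to produce the constant $c^{(d,r)}$, and the scaled bias $\sqrt{nh^{d+8}}$ (or the residual after explicit correction) is shown negligible under the stated bandwidth conditions. The only cosmetic difference is that you compare $B_n$ with $D_n$ directly, whereas the paper introduces $\wt B_n(x)=\|Q_n(x)[\wh V(x)^T\nabla\wh f(x)-V_h(x)^T\nabla f_h(x)]\|$ as the intermediate quantity (which isolates the bias in a single subtraction), but these are interchangeable via the $\mathcal{M}$-analogue of Proposition~\ref{ridgenessest}.
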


\begin{remark}\label{hausdorffbias}
{\em
We emphasize that the method in (i) is feasible here because we only require $\gamma_{n,h}^{(2)}\log{n}\rightarrow0$ and $h\rightarrow0$ for $n$ and $h$ in the results in Section~\ref{mhconfden}. As a comparison, the Hausdorff distance based approach for $\mathcal{M}_h$ developed in Chen et al. (2015) requires a oversmoothing bandwidth such that $\gamma_{n,h}^{(4)}\rightarrow0$, which implies that the bias dominates the stochastic variation in ridge estimation using the Hausdorff distance if a second order kernel is used, and hence the implicit bias correction approach using undersmoothing bandwidth is not applicable in their method. }
\end{remark}

\subsection{Estimating the unknowns}\label{unknownest}

The surface integrals $c_h^{(d,r)}$ and $c^{(d,r)}$ are unknown quantities that need to be estimated in order to make the confidence regions in Theorems~\ref{confidenceregion} and \ref{underconfidenceregion} computable with data. For a bandwidth $l>0$, we use the following plug-in estimators. 
Let 
\begin{align*}
&\wh A_{n,l}(x,z) = \wh M_{n,l}(x)[\wh f_{n,l}(x)\wh \Sigma_{n,l}(x)]^{-1/2}z,\\
&\wh\Omega_{n,l}(x,z) = \int_{\mathbb{R}^d} \nabla d^2 K(u)^T \wh A_{n,l}(x,z) \wh A_{n,l}(x,z)^T \nabla d^2 K(u) du,\\
& \wh{\mathcal{M}}_{n,l} = \{x\in\mathcal{H}:\; \wh V_{n,l}(x)^T\nabla \wh f_{n,l}(x) = 0, \; \wh\lambda_{r+1,n,l}(x) < 0\}.
%
%
\end{align*}
Note that the bandwidth $l$ here is not necessarily the same one as used for explicit bias correction in Section~\ref{biasasymptotic}. But we do need a similar condition for them so the same bandwidth $l$ is used for simplicity. For any nice set $\mathcal{A}\subset\mathcal{H}$, let 
\begin{align*}
\wh c_{n,l}^{(d,r)}(\mathcal{A}) = \log\left\{ \frac{r^{(d-2)/2}}{2\pi^{d/2}} \int_{\mathbb{S}^{d-r-1}}  \int_{\wh{\mathcal{M}}_{n,l}\cap \mathcal{A} } \|\wh\Omega_{n,l}(x,z)^{1/2} \Lambda(T_x\wh{\mathcal{M}}_{n,l})\|_r d\mathscr{H}_r(x)d\mathscr{H}_{d-r-1}(z) \right\}.
\end{align*} 
For simplicity we denote $\wh c_{n,l}^{(d,r)} = \wh c_{n,l}^{(d,r)}(\mathcal{H})$. To prove the confidence regions for $\mathcal{M}_h$ and $\mathcal{M}$ are still valid after replacing $b_h(z_\alpha,c_h^{(d,r)})$ and $b_h(z_\alpha,c^{(d,r)})$ by $b_h(z,\wh c_{n,l}^{(d,r)})$, we need to show that $\wh c_{n,l}^{(d,r)}$ is a consistent estimator of $ c_h^{(d,r)}$ and $c^{(d,r)}$. The proof uses similar ideas as in Qiao (2019a), who focuses on surface integral estimation on density level sets, which are $(d-1)$-dimensional manifolds embedded in $\mathbb{R}^d$. Since we view density ridges as intersections of $d-r$ level sets (in a broad sense to include $d-r=1$; see the introduction), the methods in Qiao (2019a) are extended in our proof. The data-driven confidence regions are given in the following corollary. 

\begin{corollary}\label{pluginregion}
Suppose assumptions ({\bf F1}) - ({\bf F5}), ({\bf K1}) - ({\bf K3}) hold. Also assume that $\gamma_{n,h}^{(2)}\log{n}\rightarrow0$, $\gamma_{n,l}^{(4)}\rightarrow0$, $h\rightarrow0$ and $l\rightarrow 0$. For any $\alpha\in(0,1)$ we have the following.\\
(i) {\em For $\mathcal{M}_h$}:
\begin{align}\label{asympconfin}
\mathbb{P}\left(\mathcal{M}_h \subset \wh C_{n,h}( b_h(z_\alpha, \wh c_{n,l}^{(d,r)} ))\right) \rightarrow 1-\alpha.
\end{align}
(ii) {\em For $\mathcal{M}$ using undersmoothing}: as $\gamma_{n,h}^{(4)}\rightarrow\infty$,
\begin{align}\label{asympconfbiasin}
\mathbb{P}\left(\mathcal{M} \subset \wh C_{n,h}( b_h(z_\alpha, \wh c_{n,l}^{(d,r)} ))\right) \rightarrow 1-\alpha.
\end{align}
(iii) {\em For $\mathcal{M}$ using explicit bias correction}: Assume that both $f$ and $K$ are six time continuously differentiable. As $h/l\rightarrow0$ and $\gamma_{n,h}^{(4)}/l^2\rightarrow\infty$, 
\begin{align}\label{asympconfbiascorrin}
\mathbb{P}\left(\mathcal{M} \subset \wh C_{n,h,l}^\text{bc}( b_h(z_\alpha,\wh c_{n,l}^{(d,r)} ))\right) \rightarrow 1-\alpha.
\end{align}

\end{corollary}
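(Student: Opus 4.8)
\emph{Proof proposal.} The three parts have the same structure, so I describe the plan for part~(i); parts~(ii) and~(iii) are handled identically after replacing Theorem~\ref{confidenceregion} by the corresponding part of Theorem~\ref{underconfidenceregion}, $\mathcal{M}_h$ by $\mathcal{M}$, $c_h^{(d,r)}$ by $c^{(d,r)}$, and (for part~(iii)) $\wh C_{n,h}$ by $\wh C_{n,h,l}^{\text{bc}}$. The plan is to reduce the whole statement to (a)~the consistency
\[
\wh c_{n,l}^{(d,r)}-c_h^{(d,r)}=o_p(1)\qquad\text{and}\qquad \wh c_{n,l}^{(d,r)}-c^{(d,r)}=o_p(1),
\]
together with (b)~a soft continuity argument. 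The only structural fact about $b_h$ needed for~(b) is the identity, immediate from~(\ref{bhz}), that $b_h(z,c+\eta)=b_h(z+\eta,c)$ for all $z,c,\eta\in\mathbb{R}$, and that $b_h(\cdot,c)$ is strictly increasing.

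\emph{Step (b).} By Remark~\ref{zerob}, with probability tending to one the constraint $\wh\lambda_{r+1}(x)<0$ in $\wh C_{n,h}$ is inactive on $\mathcal{M}_h$, so up to an event of probability $o(1)$ we have $\{\mathcal{M}_h\subset\wh C_{n,h}(a_n)\}=\{\sqrt{nh^{d+4}}\sup_{x\in\mathcal{M}_h}B_n(x)\le a_n\}$. Put $\eta_n=\wh c_{n,l}^{(d,r)}-c_h^{(d,r)}$, so that $a_n:=b_h(z_\alpha,\wh c_{n,l}^{(d,r)})=b_h(z_\alpha+\eta_n,c_h^{(d,r)})$. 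Fix $\varepsilon>0$; since $\eta_n=o_p(1)$, the event $\{|\eta_n|<\varepsilon\}$ has probability $\to1$, and on it $b_h(z_\alpha-\varepsilon,c_h^{(d,r)})\le a_n\le b_h(z_\alpha+\varepsilon,c_h^{(d,r)})$ by monotonicity. Hence $\mathbb{P}(\mathcal{M}_h\subset\wh C_{n,h}(a_n))$ lies, up to additive $o(1)$, between $\mathbb{P}\big(\sqrt{nh^{d+4}}\sup_{x\in\mathcal{M}_h}B_n(x)\le b_h(z_\alpha-\varepsilon,c_h^{(d,r)})\big)$ and the same probability with $z_\alpha+\varepsilon$ in place of $z_\alpha-\varepsilon$; by Theorem~\ref{confidenceregion} these converge to $e^{-e^{-(z_\alpha-\varepsilon)}}$ and $e^{-e^{-(z_\alpha+\varepsilon)}}$, respectively. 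Letting $\varepsilon\downarrow0$ and using the continuity of $z\mapsto e^{-e^{-z}}$ gives~(\ref{asympconfin}). The argument for~(\ref{asympconfbiasin}) and~(\ref{asympconfbiascorrin}) is identical but anchored at $c^{(d,r)}$ through Theorem~\ref{underconfidenceregion}; it additionally uses $c_h^{(d,r)}=c^{(d,r)}+o(1)$ as $h\to0$, which is a deterministic special case of the convergence in Step~(a).

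\emph{Step (a).} I would establish consistency of $\wh c_{n,l}^{(d,r)}$ in three layers. First, $\gamma_{n,l}^{(4)}\to0$ and $l\to0$ give $\sup_{x\in\mathcal{H}}|\wh f_{n,l}^{(\gamma)}(x)-f^{(\gamma)}(x)|=o_p(1)$ for every multi-index with $|\gamma|\le4$ (kernel bias $O(l^2)$, stochastic part $O_p(\gamma_{n,l}^{(|\gamma|)})$). Second, feeding this into the eigendecomposition of $\nabla^2\wh f_{n,l}$ — using the eigen-gap assumption~(\textbf{F4}) and the matrix-perturbation estimates underlying Lemmas~\ref{lambda2bound} and~\ref{lambda2} — yields uniform consistency, over $x$ in a neighborhood of $\mathcal{M}$ and $z\in\mathbb{S}^{d-r-1}$, of $\wh\lambda_{i,n,l}$, $\wh v_{i,n,l}$, hence of $\wh M_{n,l}$, $\wh\Sigma_{n,l}$, $\wh f_{n,l}\wh\Sigma_{n,l}$, $\wh A_{n,l}$, and finally $\wh\Omega_{n,l}(x,z)$ towards $M$, $\Sigma$, $f\Sigma$, $A$, $\Omega$; the third-order part of this consistency is what forces the tangent frame $\Lambda(T_x\wh{\mathcal{M}}_{n,l})$ to converge, since $T_x\mathcal{M}$ is the common kernel of the differentials of the $d-r$ functions $v_i^T\nabla f$, $i=r+1,\dots,d$. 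Third, with probability tending to one $\wh{\mathcal{M}}_{n,l}$ is an $r$-dimensional manifold that converges to $\mathcal{M}$ (through $\mathcal{M}_l$) in Hausdorff distance and in tangent spaces, with reach bounded away from $0$; this reduces to checking that the transversality in~(\textbf{F2}) persists for the perturbed functions $\wh v_{i,n,l}^T\nabla\wh f_{n,l}$ and running a quantitative implicit-function/coarea argument. Transferring both surface integrals defining $\wh c_{n,l}^{(d,r)}$ and $c_h^{(d,r)}$ (resp.\ $c^{(d,r)}$) — integrals of the uniformly convergent integrands $\|\wh\Omega_{n,l}(x,z)^{1/2}\Lambda(T_x\wh{\mathcal{M}}_{n,l})\|_r$ and $\|\Omega(x,z)^{1/2}\Lambda(T_x\mathcal{M}_h)\|_r$ over the converging manifolds — to a common parameter domain then gives $\wh c_{n,l}^{(d,r)}=c_h^{(d,r)}+o_p(1)=c^{(d,r)}+o_p(1)$, and the same change of variables run purely deterministically (with $\mathcal{M}_h$ and bandwidth $h$) yields $c_h^{(d,r)}\to c^{(d,r)}$.

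The third layer is the step I expect to be the main obstacle. It is essentially the surface-integral-estimation problem solved in Qiao (2019a) for a single density level set; the new difficulty is to carry that analysis through when the ridge is the \emph{intersection} of the $d-r$ estimated level sets $\{\wh v_{i,n,l}^T\nabla\wh f_{n,l}=0\}$, so that the parametrization of $\wh{\mathcal{M}}_{n,l}$ and its Jacobian must be controlled simultaneously in all $d-r$ normal directions, uniformly over the manifold. Here~(\textbf{F2}) supplies the uniform transversality that keeps the intersection an $r$-manifold of controlled reach under perturbation, (\textbf{F4}) the eigenvalue separation that makes the eigenframe — and hence $M$, $A$, $\Omega$ and $\Lambda(T_x\cdot)$ — stable, and~(\textbf{K2}) the positive definiteness of $\mathbf{R}$, hence of $\Omega(x,z)$, needed to form the square roots $\Omega(x,z)^{1/2}$, $\wh\Omega_{n,l}(x,z)^{1/2}$ and to bound their difference.
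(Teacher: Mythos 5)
Your proposal is correct and follows essentially the same route as the paper: reduce the corollary to the consistency $\wh c_{n,l}^{(d,r)}=c_h^{(d,r)}+o_p(1)=c^{(d,r)}+o_p(1)$ (the paper simply asserts this reduction via Theorems~\ref{confidenceregion} and \ref{underconfidenceregion}; your sandwich argument using $b_h(z,c+\eta)=b_h(z+\eta,c)$ and $\varepsilon\downarrow0$ is a clean, correct way to fill in that step), and then establish the consistency by parametrizing $\wh{\mathcal{M}}_{n,l}$ over $\mathcal{M}_h$ (the paper's normal projection $P_n$ and the implicit system (\ref{systemequ}) are exactly the ``quantitative implicit-function'' argument you sketch, with uniform control of $t_n$, its gradient, the frames $\wh N_{n,l}$, and the Jacobian $J_n$, followed by the $\text{I}_n+\text{II}_n+\text{III}_n$ decomposition). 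Your identification of the roles of ({\bf F2}) (transversality/reach stability), ({\bf F4}) (eigengap for frame stability), and ({\bf K2}) (positive definiteness of $\Omega$) as the key ingredients is also exactly what the paper leans on.
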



\subsection{Further improvements related to eigenvalues and critical points}\label{Generalization}

We have considered the confidence regions in the form of $\wh C_{n,h}(a_n,b_n)$ defined in (\ref{hatCnh}) and $\wh C_{n,h,l}^{\text{bc}}(a_n,b_n)$ defined in (\ref{hatCnhbc}) for some $a_n>0$ and $b_n=0$. So far our main focus has been on the determination of $a_n$, after the justification for the choice $b_n=0$ given in Remark~\ref{zerob}. In fact, one can use some nonpositive $b_n$ as the upper bound of $\wh\lambda_{r+1}$, to potentially make the confidence regions more efficient. This is because $\sup_{x\in\mathcal{M}}\lambda_{r+1}(x)$ is strictly bounded away 0 under assumption (\textbf{F3}), which allows us to choose a nonpositive $b_n$ such that $\sup_{x\in\mathcal{M}}\wh\lambda_{r+1}(x)<b_n$ holds with probability tending to one under our assumptions, as stated in the following proposition. 
%
%
For $a,b\in\mathbb{R}$, denote $a\wedge b=\min(a,b)$. Let $\nu_n$ be a sequence such that $\nu_n\rightarrow\infty$ and define 
\begin{align}
&\zeta_n^0 = \left[\sup_{x\in\wh{\mathcal{M}}} \wh{\lambda}_{r+1}(x) + \nu_n\gamma_{n,h}^{(2)}\right]\wedge 0,\\
&\zeta_n =  \left[\sup_{x\in\wh{\mathcal{M}}} \wh{\lambda}_{r+1}(x) + \nu_n\left(\gamma_{n,h}^{(2)} +h^2\right)\right]\wedge 0.
\end{align}

\begin{proposition}\label{lambdaprop}
Suppose assumptions ({\bf F1}) - ({\bf F4}), ({\bf K1}) - ({\bf K3}) hold. Also assume that $\gamma_{n,h}^{(2)}\rightarrow0$ for $d-r=1$ and $\gamma_{n,h}^{(3)}\rightarrow0$ for $d-r\geq 2$. Then we have
\begin{align}
&\mathbb{P}\left(\sup_{x\in\mathcal{M}_h} \wh\lambda_{h,r+1}(x) \geq \zeta_n^0 \right) \rightarrow 0,\label{lambdah}\\
&\mathbb{P}\left(\sup_{x\in\mathcal{M}} \wh\lambda_{r+1}(x) \geq \zeta_n \right) \rightarrow 0 \label{lambdanh}.
\end{align}
\end{proposition}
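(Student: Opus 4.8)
The plan is to establish both \eqref{lambdah} and \eqref{lambdanh} by the same two-part argument, treating $\mathcal{M}_h$ at rate $\gamma_{n,h}^{(2)}$ and $\mathcal{M}$ at rate $\gamma_{n,h}^{(2)}+h^2$, the extra $h^2$ being the estimation bias controlled in Lemma~\ref{biasasymptotic}. Since $\zeta_n^0=\big[\sup_{y\in\wh{\mathcal{M}}}\wh\lambda_{r+1}(y)+\nu_n\gamma_{n,h}^{(2)}\big]\wedge 0$ (and similarly for $\zeta_n$), it suffices to show that, with probability tending to one, (a) $\sup_{x\in\mathcal{M}_h}\wh\lambda_{r+1}(x)<0$ and (b) $\sup_{x\in\mathcal{M}_h}\wh\lambda_{r+1}(x)<\sup_{y\in\wh{\mathcal{M}}}\wh\lambda_{r+1}(y)+\nu_n\gamma_{n,h}^{(2)}$ (with the rate $\gamma_{n,h}^{(2)}+h^2$ in the $\mathcal{M}$ version), for then $\sup_{x\in\mathcal{M}_h}\wh\lambda_{r+1}(x)$ lies strictly below both terms defining the minimum, and a union bound over the two exceptional events finishes. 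The two basic stochastic inputs are the uniform derivative rate $\sup_{x\in\mathcal{H}}\|\nabla^2\wh f(x)-\nabla^2 f_h(x)\|_F=O_p(\gamma_{n,h}^{(2)})$ and the bias bound $\sup_{x\in\mathcal{H}}\|\nabla^2 f_h(x)-\nabla^2 f(x)\|_F=O(h^2)$; combined with Weyl's inequality and the eigenvalue separation from ({\bf F4}) and Lemma~\ref{lambda2bound}(iii) (which make $\lambda_{r+1,h}$ and $\wh\lambda_{r+1}$ simple, well-defined eigenvalues on $\mathcal{N}_{\delta_0}(\mathcal{M})$ with probability tending to one), they give $\sup_{x\in\mathcal{N}_{\delta_0}(\mathcal{M})}|\wh\lambda_{r+1}(x)-\lambda_{r+1,h}(x)|=O_p(\gamma_{n,h}^{(2)})$ and $\sup_{x\in\mathcal{N}_{\delta_0}(\mathcal{M})}|\wh\lambda_{r+1}(x)-\lambda_{r+1}(x)|=O_p(\gamma_{n,h}^{(2)}+h^2)$.

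Part (a) is then immediate: Lemma~\ref{lambda2bound}(iii) gives $\sup_{x\in\mathcal{M}_h}\lambda_{r+1,h}(x)<-\beta_0$ for small $h$, and ({\bf F3}) together with the compactness of $\mathcal{M}$ in Lemma~\ref{lambda2bound}(i) gives $\sup_{x\in\mathcal{M}}\lambda_{r+1}(x)<-\beta_0'$ for some $\beta_0'>0$; adding the Weyl bounds above yields $\sup_{x\in\mathcal{M}_h}\wh\lambda_{r+1}(x)<-\beta_0+O_p(\gamma_{n,h}^{(2)})<0$ and $\sup_{x\in\mathcal{M}}\wh\lambda_{r+1}(x)<-\beta_0'+O_p(\gamma_{n,h}^{(2)}+h^2)<0$ with probability tending to one (in particular $\wh{\mathcal{M}}$ is nonempty so the supremum over it is finite).

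For part (b) the key geometric ingredient is the one-sided Hausdorff bound $\sup_{x\in\mathcal{M}_h}d(x,\wh{\mathcal{M}})=O_p(\gamma_{n,h}^{(2)})$ and $\sup_{x\in\mathcal{M}}d(x,\wh{\mathcal{M}})=O_p(\gamma_{n,h}^{(2)}+h^2)$. Writing $g=V^T\nabla f$, $g_h=V_h^T\nabla f_h$, $\wh g=\wh V^T\nabla\wh f$, the estimates behind Proposition~\ref{pointwisenormal} together with Lemma~\ref{biasasymptotic} give $\sup_{x\in\mathcal{N}_{\delta_0}(\mathcal{M})}\|\wh g(x)-g(x)\|=O_p(\gamma_{n,h}^{(2)}+h^2)$ (and $\sup\|\wh g-g_h\|=O_p(\gamma_{n,h}^{(2)})$), while the transversality in ({\bf F2}) makes $\nabla g$ (resp.\ $\nabla g_h$) uniformly surjective on $\mathcal{N}_{\delta_0}(\mathcal{M})$; a quantitative implicit-function/degree argument then produces, for each $x\in\mathcal{M}$ (resp.\ $x\in\mathcal{M}_h$), a point $\wh x$ with $\wh g(\wh x)=0$, $\wh\lambda_{r+1}(\wh x)<0$ (so $\wh x\in\wh{\mathcal{M}}$ by part (a) and continuity), and $\|x-\wh x\|\lesssim\|\wh g(x)\|$. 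When $d-r=1$ this reduces to an intermediate-value argument along the population normal direction $\nabla g_h/\|\nabla g_h\|$, which needs only $\gamma_{n,h}^{(2)}\to0$; when $d-r\ge2$ the transverse-intersection structure forces one to control the gradients of the eigenvector fields $\wh v_{r+1},\dots,\wh v_d$, hence the third derivatives of $\wh f$, and this is where $\gamma_{n,h}^{(3)}\to0$ enters. Given such $\wh x$, I transfer the eigenvalue through the smooth population intermediary $\lambda_{r+1,h}$, whose gradient is uniformly bounded by some $L_\lambda<\infty$ on $\mathcal{N}_{\delta_0}(\mathcal{M})$ under ({\bf F1}) and ({\bf F4}):
\begin{align*}
\wh\lambda_{r+1}(x)-\sup_{y\in\wh{\mathcal{M}}}\wh\lambda_{r+1}(y)
&\le \wh\lambda_{r+1}(x)-\wh\lambda_{r+1}(\wh x)\\
&\le L_\lambda\|x-\wh x\|+2\sup_{z\in\mathcal{N}_{\delta_0}(\mathcal{M})}|\wh\lambda_{r+1}(z)-\lambda_{r+1,h}(z)|=O_p(\gamma_{n,h}^{(2)}),
\end{align*}
uniformly in $x\in\mathcal{M}_h$, and the analogous estimate through $\lambda_{r+1}$ gives $O_p(\gamma_{n,h}^{(2)}+h^2)$ for the $\mathcal{M}$ case. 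Since an $O_p(1)$ sequence is eventually smaller than $\nu_n\to\infty$ with probability tending to one, (b) follows.

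The main obstacle is the one-sided Hausdorff bound with the sharp rate in the regime $d-r\ge2$: realizing $\wh{\mathcal{M}}$ as essentially a transverse intersection of $d-r$ estimated hypersurfaces and showing that every point of $\mathcal{M}_h$ (or $\mathcal{M}$) admits a nearby exact solution of $\wh g=0$ which also meets the eigenvalue-sign constraint, at distance $O_p(\gamma_{n,h}^{(2)})$ (resp.\ $O_p(\gamma_{n,h}^{(2)}+h^2)$); this is exactly the step that dictates the dichotomy in the bandwidth conditions. The remaining pieces — the Weyl perturbation bounds, the sign statement from ({\bf F3}) and Lemma~\ref{lambda2bound}(iii), and the Lipschitz transfer through the population eigenvalue — are routine once that bound and the uniform derivative rates are available.
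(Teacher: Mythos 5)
Your proposal is correct and proceeds by essentially the same route as the paper's proof. The crux in both is the one-sided Hausdorff inclusion $\mathcal{M}_h\subset\wh{\mathcal{M}}\oplus O_p(\gamma_{n,h}^{(2)})$ (the paper's (\ref{hausdorff2}), obtained from a uniform margin/implicit-function bound for $\wh V^T\nabla\wh f$, where for $d-r\geq 2$ the non-degeneracy of $\wh L(x)^T\wh L(x)$ requires third-derivative control, i.e.\ $\gamma_{n,h}^{(3)}\to 0$, while for $d-r=1$ a one-dimensional level-set argument suffices and only $\gamma_{n,h}^{(2)}\to 0$ is needed — the paper cites Cuevas et al.\ (2006) for this case rather than arguing it directly), followed by a Lipschitz/Weyl transfer of $\lambda_{r+1,h}$ and the uniform rate $\sup_{\mathcal{H}}|\wh\lambda_{r+1}-\lambda_{r+1,h}|=O_{a.s.}(\gamma_{n,h}^{(2)})$ from Lemma~\ref{lambda2}. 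The only organizational difference is that the paper writes $\sup_{\mathcal{M}_h}\wh\lambda_{r+1}-\sup_{\wh{\mathcal{M}}}\wh\lambda_{r+1}=\mathrm{I}_n+\mathrm{II}_n+\mathrm{III}_n$ (sandwiching through $\lambda_{r+1,h}$ on both index sets, with $\mathrm{II}_n,\mathrm{III}_n$ killed by uniform consistency and $\mathrm{I}_n$ by the Hausdorff bound), whereas you transfer point-by-point through a nearby $\wh x\in\wh{\mathcal{M}}$ — these are mathematically the same bound. Your part (a) explicitly settles the $\wedge 0$ clause (that $\sup_{\mathcal{M}_h}\wh\lambda_{r+1}<0$ with probability tending to one), which the paper leaves implicit; your phrase "by part (a) and continuity" for $\wh\lambda_{r+1}(\wh x)<0$ should really invoke Lemma~\ref{lambda2bound}(iii) plus the Lipschitz continuity of $\lambda_{r+1,h}$ at points $O_p(\gamma_{n,h}^{(2)})$ off $\mathcal{M}_h$, but the underlying estimate is correct. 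You also did not note that the paper first proves the reverse inclusion (\ref{hausdorff1}) and then "switches the roles" of $\mathcal{M}_h$ and $\wh{\mathcal{M}}$; your direct implicit-function argument for the forward inclusion is equivalent and does not need the reverse inclusion as a stepping stone.
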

\begin{remark}\label{bnnew}
{\em
The result in Proposition~\ref{lambdaprop} immediately implies that we can use $\zeta_n^0$ to replace 0 as $b_n$ in the confidence regions we construct in Corollary~\ref{pluginregion} for $\mathcal{M}_h$ (and use $\zeta_n$ for $\mathcal{M}$), if we additionally assume $\gamma_{n,h}^{(3)}\rightarrow0$ for $d-r\geq 2$. }
\end{remark}

So far we have imposed assumption ({\bf F5}) to exclude critical points on ridges from our consideration. The reason is that the estimation of critical points behaves differently from regular points in our approach. Below we remove assumption ({\bf F5}), that is, we allow the existence of points $x$ such that $\|\nabla f(x)\| = 0$ on $\mathcal{M}$. 
For $0<\eta<1$, let $\mathcal{K}_{h,\eta} = \{x\in\mathcal{H}:\; \|\nabla f_h(x)\| \leq h^{\eta}\}.$ Note that $\mathcal{M}_h = (\mathcal{M}_h \cap \mathcal{K}_{h,\eta})\cup (\mathcal{M}_h \cap \mathcal{K}_{h,\eta}^\complement)$. When $h$ is small, the set $\mathcal{M}_h \cap \mathcal{K}_{h,\eta}$ is a small neighborhood near all the critical points on the ridge $\mathcal{M}_h$, and $\mathcal{M}_h \cap\mathcal{K}_{h,\eta}^\complement$ is the set of remaining points on the ridge. Our strategy is to construct two regions to cover $\mathcal{M}_h \cap \mathcal{K}_{h,\eta}$ and $\mathcal{M}_h \cap \mathcal{K}_{h,\eta}^\complement$ separately and then combine them. 
For a sequence $\mu_n\rightarrow\infty$ such that $h\mu_n\rightarrow 0$, let $\mathcal{E}_{n,\eta} = \{x\in\mathcal{H}:\; \|\nabla \wh f(x)\| \leq \mu_n\gamma_{n,h}^{(1)} + h^\eta \}$ and
\begin{align*}
&\mathcal{G}_{n,\eta}^0 = \mathcal{E}_{n,\eta}\cap \{x\in\mathcal{H}:\; \wh\lambda_{r+1}(x)<\zeta_{n}^0\},\\
&\mathcal{G}_{n,\eta} = \mathcal{E}_{n,\eta}\cap \{x\in\mathcal{H}:\; \wh\lambda_{r+1}(x)<\zeta_{n}\}.
\end{align*}
%
%
Then $\mathcal{G}_{n,\eta}^0$ covers $\mathcal{M}_h \cap \mathcal{K}_{h,\eta}$ with a large probability. The following theorem gives the confidence regions for $\mathcal{M}_h$ and $\mathcal{M}$ without the assumption ({\bf F5}), where we also incorporate a new choice for $b_n$ as discussed in Remark~\ref{bnnew}.

\begin{theorem}\label{generalization}
Suppose assumptions ({\bf F1}) - ({\bf F4}), ({\bf K1}) - ({\bf K3}) hold. Also we assume that $\gamma_{n,h}^{(2)}\log{n}\rightarrow0$ for $d-r=1$ and $\gamma_{n,h}^{(3)}\rightarrow0$ for $d-r\geq 2$; $\gamma_{n,l}^{(4)}\rightarrow0$ and $l\rightarrow0$. Suppose $0<\eta<1$, $\nu_n\rightarrow\infty$, $\mu_n\rightarrow\infty$ and $h\mu_n\rightarrow 0$. For any $\alpha\in(0,1)$ we have the following.\\ 
(i) {\em For $\mathcal{M}_h$}:
\begin{align}
\mathbb{P}\left( \mathcal{M}_h \subset [\wh C_{n,h}(b_{h}(z_\alpha,\wh c_{h,l}^{(d,r)}( \mathcal{E}_{n,\eta}^{\complement})),\;\zeta_n^0) \cup \mathcal{G}_{n,\eta}^0 ] \right) \rightarrow 1-\alpha. \label{confregzero2}
\end{align}
(ii) {\em For $\mathcal{M}$ using undersmoothing}: as $\gamma_{n,h}^{(4)}\rightarrow\infty$,
\begin{align}
\mathbb{P}\left( \mathcal{M} \subset [\wh C_{n,h}(b_{h}(z_\alpha,\wh c_{n,l}^{(d,r)}( \mathcal{E}_{n,\eta}^\complement)),\;\zeta_n) \cup \mathcal{G}_{n,\eta} ] \right) \rightarrow 1-\alpha.
\end{align}
(iii) {\em For $\mathcal{M}$ using explicit bias correction}: Assume that both $f$ and $K$ are six time continuously differentiable. As $h/l\rightarrow0$ and $\gamma_{n,h}^{(4)}/l^2\rightarrow\infty$, 
\begin{align}
\mathbb{P}\left( \mathcal{M} \subset [\wh C_{n,h,l}^{\text{bc}}(b_{h}(z_\alpha,\wh c_{n,l}^{(d,r)}( \mathcal{E}_{n,\eta}^\complement)),\;\zeta_n) \cup \mathcal{G}_{n,\eta} ] \right) \rightarrow 1-\alpha.
\end{align}

\end{theorem}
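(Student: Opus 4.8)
The plan is to split each ridge into the portion lying in a shrinking neighbourhood of the critical points of $f$ --- where the gradient dominates and the normalizing matrix $Q_n$ degenerates --- and the remaining ``regular'' portion, where $\|\nabla f_h\|$ (resp. $\|\nabla f\|$) stays above $h^{\eta}$; the first portion is to be covered by $\mathcal{G}_{n,\eta}^{0}$ (resp. $\mathcal{G}_{n,\eta}$), the second by $\wh C_{n,h}$ (resp. $\wh C_{n,h,l}^{\text{bc}}$), and the two coverage statements are then glued. For part (i), write $\mathcal{M}_h=(\mathcal{M}_h\cap\mathcal{K}_{h,\eta})\cup(\mathcal{M}_h\cap\mathcal{K}_{h,\eta}^{\complement})$. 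Using $\sup_{x\in\mathcal{H}}\|\nabla\wh f(x)-\nabla f_h(x)\|=O_p(\gamma_{n,h}^{(1)})$ and $\mu_n\to\infty$, every $x\in\mathcal{M}_h$ with $\|\nabla f_h(x)\|\le h^{\eta}$ (indeed with $\|\nabla f_h(x)\|\le h^{\eta}+c\,\mu_n\gamma_{n,h}^{(1)}$ for a suitable $c>0$) lies in $\mathcal{E}_{n,\eta}$ with probability tending to one; together with $(\ref{lambdah})$ of Proposition~\ref{lambdaprop}, which gives $\sup_{x\in\mathcal{M}_h}\wh\lambda_{r+1}(x)<\zeta_n^{0}$ with probability tending to one, this yields $\mathbb{P}(\mathcal{M}_h\cap\mathcal{E}_{n,\eta}\subset\mathcal{G}_{n,\eta}^{0})\to1$, so the critical portion of $\mathcal{M}_h$ --- and a collar around it --- is absorbed into $\mathcal{G}_{n,\eta}^{0}$.

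For the regular portion I would rerun Propositions~\ref{pointwisenormal}, \ref{ridgenessest} and \ref{covstructure} and Theorems~\ref{gassianapprox} and \ref{confidenceregion} with $\mathcal{M}_h$ replaced throughout by $\mathcal{M}_h\cap\mathcal{K}_{h,\eta}^{\complement}$, so that the bound $\|\nabla f_h\|>h^{\eta}$ takes over the role played by assumption (\textbf{F5}), the only difference being that this bound now vanishes as $h\to0$. The point that makes this viable is that the self-normalized objects $B_n$, $D_n$, the matrix $\Omega(x,z)$ in $(\ref{omegaxz})$ and the limiting $\chi$-field $\|S_h\|$ are \emph{scale invariant} in $\|\nabla f_h(x)\|$: since $A(x,z)=M(x)Q(x)z$ with $\sqrt{f(x)}\|A(x,z)\|_{\mathbf{R}}=\|z\|$, the factor $\|M(x)\|\asymp\|\nabla f_h(x)\|$ is cancelled by $\|Q(x)\|\asymp\|\nabla f_h(x)\|^{-1}$; hence the covariance expansion of Proposition~\ref{covstructure}, the geometric constant and the limit $(\ref{dnxextreme})$ are unaffected as $h\to0$. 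What changes is that the remainders of Propositions~\ref{pointwisenormal}--\ref{ridgenessest}, after multiplication by $Q_n$ (of operator norm $\asymp h^{-\eta}$ on $\mathcal{K}_{h,\eta}^{\complement}$) and by $\sqrt{nh^{d+4}}$, pick up an $h^{-\eta}$ inflation; one must verify that, under the bandwidth hypotheses (this is why $\gamma_{n,h}^{(3)}\to0$ is required when $d-r\ge2$) and for $\eta$ chosen small enough, the inflated remainders remain $o_p(1/\sqrt{\log(h^{-1})})$, hence do not shift the limiting extreme value distribution. This gives $\mathbb{P}\big(\sqrt{nh^{d+4}}\sup_{x\in\mathcal{M}_h\cap\mathcal{K}_{h,\eta}^{\complement}}B_n(x)<b_h(z,c_h^{(d,r)}(\mathcal{K}_{h,\eta}^{\complement}))\big)\to e^{-e^{-z}}$, and a version of this uniform over domains close to $\mathcal{K}_{h,\eta}^{\complement}$ lets one replace $\mathcal{K}_{h,\eta}^{\complement}$ by the random $\mathcal{E}_{n,\eta}^{\complement}$.

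It remains to pass to the plug-in constant and glue. Arguing as in Corollary~\ref{pluginregion} (and Qiao (2019a)), $\wh{\mathcal{M}}_{n,l}\cap\mathcal{E}_{n,\eta}^{\complement}$ is close to $\mathcal{M}_h\cap\mathcal{K}_{h,\eta}^{\complement}$ in Hausdorff distance and in $\mathscr{H}_r$-measure, differing only on a collar of $\mathscr{H}_r$-measure $O(\mu_n\gamma_{n,h}^{(1)})\to0$ on which the integrand $\|\Omega(x,z)^{1/2}\Lambda(T_x\mathcal{M}_h)\|_r$ stays bounded (again by the scale invariance of $\Omega$, it does not blow up near $\{\|\nabla f_h\|=h^{\eta}\}$); hence $\wh c_{n,l}^{(d,r)}(\mathcal{E}_{n,\eta}^{\complement})=c_h^{(d,r)}(\mathcal{K}_{h,\eta}^{\complement})+o_p(1)$ and $b_h(z_\alpha,\wh c_{n,l}^{(d,r)}(\mathcal{E}_{n,\eta}^{\complement}))=b_h(z_\alpha,c_h^{(d,r)}(\mathcal{K}_{h,\eta}^{\complement}))+o_p(1/\sqrt{\log(h^{-1})})$. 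On the event $\{\sup_{x\in\mathcal{M}_h}\wh\lambda_{r+1}(x)<\zeta_n^{0}\}$ the eigenvalue constraints in $\wh C_{n,h}$ and in $\mathcal{G}_{n,\eta}^{0}$ are automatic on $\mathcal{M}_h$, so, since $\mathcal{G}_{n,\eta}^{0}\subseteq\mathcal{E}_{n,\eta}$, on that event the event $\{\mathcal{M}_h\subset\wh C_{n,h}(\cdot,\zeta_n^{0})\cup\mathcal{G}_{n,\eta}^{0}\}$ coincides with $\{\sup_{x\in\mathcal{M}_h\cap\mathcal{E}_{n,\eta}^{\complement}}\sqrt{nh^{d+4}}B_n(x)\le b_h(z_\alpha,\wh c_{n,l}^{(d,r)}(\mathcal{E}_{n,\eta}^{\complement}))\}$, whose probability tends to $1-\alpha$ by the previous step; combining with the probability-one events proves $(\ref{confregzero2})$.

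Parts (ii) and (iii) follow the same three steps with $\mathcal{M}$ in place of $\mathcal{M}_h$: split off $\{x\in\mathcal{M}:\|\nabla f(x)\|\le\tfrac12 h^{\eta}\}$, which is contained in $\mathcal{K}_{h,\eta}$ for small $h$ since $\|\nabla f_h-\nabla f\|=O(h^2)=o(h^{\eta})$, cover it by $\mathcal{G}_{n,\eta}$ using $\zeta_n$ and $(\ref{lambdanh})$ of Proposition~\ref{lambdaprop}, and cover the regular part $\{\|\nabla f\|>\tfrac12 h^{\eta}\}$ by $\wh C_{n,h}$ or $\wh C_{n,h,l}^{\text{bc}}$. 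The additional ingredient on the regular part is the bias: by Lemma~\ref{biasasymptotic} the bias of $\wh V^T\nabla\wh f$ on $\mathcal{M}$ is $\tfrac12 h^2\mu_K\beta(x)+R_h$, so under undersmoothing ($\gamma_{n,h}^{(4)}\to\infty$) the normalized bias $\sqrt{nh^{d+4}}\,\|Q_n(\tfrac12 h^2\mu_K\beta+R_h)\|$ is, up to the same $h^{-\eta}$ factor, $o_p(1/\sqrt{\log(h^{-1})})$ and does not perturb the extreme value limit, giving (ii) as in Theorem~\ref{underconfidenceregion}(i); for explicit bias correction one replaces $\wh V^T\nabla\wh f$ by $\wh V^T\nabla\wh f-\tfrac12 h^2\mu_K\wh\beta_{n,l}$ and uses $h/l\to0$, $\gamma_{n,h}^{(4)}/l^2\to\infty$ and six-fold differentiability (so $R_h=O(h^4)$) to kill the residual bias and the estimation error of $\wh\beta_{n,l}$, as in Theorem~\ref{underconfidenceregion}(ii); gluing with $\mathcal{G}_{n,\eta}$ as before yields (iii). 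The main obstacle throughout is the regular-portion analysis of the second paragraph --- redoing every approximation of Sections~\ref{mhconfden}--\ref{unknownest} uniformly over the \emph{degenerating} domain $\mathcal{M}_h\cap\mathcal{K}_{h,\eta}^{\complement}$ while controlling the $\|\nabla f_h\|^{-1}$-sized operator norm of the normalizer, and checking that the resulting powers of $h^{-\eta}$, in both the stochastic remainders and the bias, stay negligible on the $\sqrt{2r\log(h^{-1})}$ scale --- with the evaluation of the surface-integral constant on the random level set $\mathcal{E}_{n,\eta}^{\complement}$ adding a further layer of care.
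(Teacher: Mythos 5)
Your proposal is correct and follows essentially the same route as the paper's proof: split $\mathcal{M}_h$ (resp.\ $\mathcal{M}$) along $\mathcal{K}_{h,\eta}$, absorb the near-critical piece into $\mathcal{G}_{n,\eta}^0$ (resp.\ $\mathcal{G}_{n,\eta}$) via Lemma~\ref{lambda2} and Proposition~\ref{lambdaprop}, rerun the extreme-value machinery on $\mathcal{M}_h\cap\mathcal{K}_{h,\eta}^\complement$ after normalizing by $\|\nabla f(x)\|$ so that only the remainders (not the covariance structure or the limit) are inflated by $h^{-\eta}$, bracket the random domain $\mathcal{E}_{n,\eta}$ between deterministic sets, and glue by Bonferroni. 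Your ``scale-invariance of $Q(x)M(x)^T$'' observation is exactly what the paper implements by passing to $\wh N, N_h, N$, $M^*$, $Q^*$ and obtaining the $\gamma_{n,h}^{(1+\eta)}$ and $h^{2-\eta}$ remainders; the only cosmetic difference is that you state the gluing as a single logical equivalence on the high-probability eigenvalue event, whereas the paper proves $\ge 1-\alpha$ and $\le 1-\alpha$ separately with $\mathcal{K}_{h,\eta}$ and the slightly enlarged $\mathcal{K}_{h,\eta}^*$ bracketing $\mathcal{E}_{n,\eta}$.
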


\begin{remark}$\;$\\[-10pt]

{
\em
(i) The results in this theorem still hold if $\zeta_n^0$ and $\zeta_n$ are replaced by 0 as discussed in Remark~\ref{zerob}. 

(ii) We use two sequences $\mu_n\rightarrow\infty$ and $\nu_n\rightarrow\infty$ in the construction of the confidence regions. One may choose $\mu_n=h^{-\mu}$ and $\nu_n=h^{-\nu}$ for some $0<\mu<1$ and $\nu>0$ to satisfy the assumptions in the theorem. The need for using these tuning parameters $\mu$, $\nu$ as well as $\eta$ also reflects the fact that the concept of ridges involves multiple components, i.e., the eigenvectors and eigenvalues of the Hessian and the gradient (see (\ref{condition1}) and (\ref{condition2})). These components have different asymptotic behaviors and roles in the estimation. The way that we use the tuning parameters captures the gradient and eigenvalues with a high probability, which allows us to be dedicated to the asymptotic behaviors related to the eigenvectors. See Section~\ref{discusssec} for the discussion of an alternative approach.

(iii) The assumption ({\bf F2}) implies that all the critical points on ridges are isolated, but it is clear from the proof that the result in this theorem also holds when the condition in assumption ({\bf F2}) is weakened to hold only on regular points on ridges. For example, the ridge may have flat parts and $\mathcal{E}_{n,\eta}$ is envisioned as tubes around these flat parts. In this case, however, it may be worth considering finer confidence regions for these flat parts by treating them as level sets of the gradient and using similar ideas as we do for regular ridge points.
%
}
\end{remark}

\section{Discussion}\label{discusssec}
In this manuscript we develop asymptotic confidence regions for density ridges. We treat ridges as the intersections of some level sets and use the VV based approach. The construction of our confidence regions is based on Gaussian approximation of suprema of empirical processes and the extreme value distribution of suprema of $\chi$-fields indexed by manifolds. It is known that the rate of convergence of this type of extreme value distribution is slow. As an alternative approach, we are working on developing a bootstrap procedure using the VV idea for the confidence regions. 

Apparently our approach can also be used for the construction of confidence regions for the intersections of multiple functions in general (such as density function and regression functions). It's known that estimating such intersections has applications in econometrics. See, e.g. Bugni (2010).

Ridges points are defined through the two conditions given in (\ref{condition1}) and (\ref{condition2}), which have different roles in the construction of our confidence regions. We choose a conservative way to dealing with the uncertainty in the estimation related to condition (\ref{condition2}), which allows us to focus on quantifying the uncertainty in estimating condition (\ref{condition1}). Alternatively, one may also find the marginal distributions in estimating these two conditions and combine them using the Bonferroni method, or even find their joint distributions to construct confidence regions. We leave the exploration of this idea to future work. \\ 

%
%
\section{Proofs}\label{proofssec}
{\bf Proof of Lemma~\ref{lambda2bound}}
\begin{proof}
Note that with assumption ({\bf F4}) both $\lambda_{r+1}$ and $V^T\nabla f$ are continuous functions on $\mathcal{N}_{\delta_0}(\mathcal{M})$. Under assumption ({\bf F3}), we can write $\mathcal{M} = \{x\in\mathcal{H}:\; V(x)^T\nabla f(x)=0,\; \lambda_{r+1}(x)\leq 0\}$, which is a compact set. The claim that $\mathcal{M}$ is $r$-dimensional manifold is a consequence of the implicit function theorem. Under assumption ({\bf F2}), the claim that $\mathcal{M}$ has positive reach follows from Theorem 4.12 in Federer (1959). This is the assertion (i).

Next we show assertion (ii). 
%
Let $\delta_{\text{gap}}:=\inf_{x\in\mathcal{H}}[\lambda_r(x) - \lambda_{r+1}(x)].$ Since $\mathcal{H}$ is compact, $\delta_{\text{gap}}>0$ due to assumption (\textbf{F4}). Lemma~\ref{lambda2} implies that $\inf_{x\in\mathcal{H}}[\lambda_{r,h}(x) - \lambda_{r+1}(x)] = \delta_{\text{gap}} + O(h^2) \geq \frac{1}{2}\delta_{\text{gap}}$ when $h$ is small enough. Then using Davis-Kahan theorem (von Luxburg, 2007) leads to 
\begin{align}\label{vvdiff}
\sup_{x\in\mathcal{H}} \| V(x)V(x)^T - V_h(x)V_h(x)^T \|_F \leq \frac{2\sqrt{2} \sup_{x\in\mathcal{H}} \|\nabla^2 f(x) - \nabla^2 f_h(x)\|_F}{\delta_{\text{gap}}} = O(h^2),
\end{align}
by using Lemma~\ref{lambda2}.

Noticing that $V(x)^TV(x) = \mathbf{I}_{d-r}(x)$, we can write
\begin{align}\label{vnfmh}
\sup_{x\in\mathcal{M}_h} \|V(x)^T\nabla f(x)\| & = \sup_{x\in\mathcal{M}_h} \|V(x)^T[V(x)V(x)^T\nabla f(x) - V_h(x)V_h(x)^T\nabla f_h(x)]\| \nonumber\\
& \leq \sqrt{d-r} \sup_{x\in\mathcal{H}} \|V(x)V(x)^T\nabla f(x) - V_h(x)V_h(x)^T\nabla f_h(x)\| \nonumber\\
&= O(h^2),
\end{align}
where we use (\ref{vvdiff}) and Lemma~\ref{lambda2}.

Let $\mathcal{M}^{(1)} = \{x\in\mathcal{H}: V(x)^T\nabla f(x)=0\}$ and $\mathcal{M}^{(2)} = \{x\in\mathcal{H}: \lambda_{r+1}<0\}$. Then $\mathcal{M} = \mathcal{M}^{(1)}\cap \mathcal{M}^{(2)}$. For any $\delta>0$, let $\mathcal{N}_{\delta}(\mathcal{M}^{(1)})=\{x\in\mathcal{H}:\; \|V(x)^T\nabla f(x)\| \leq\delta\}$. Note that (\ref{vnfmh}) implies that for any fixed $0<\delta\leq\delta_0$, $\mathcal{M}_h \subset \mathcal{N}_{\delta}(\mathcal{M}^{(1)})$ when $h$ is small enough.

It suffices to show $\mathcal{M}_h\subset \mathcal{M}^{(2)} $, when $h$ is small enough. Since $\mathcal{M}^{(1)}$ is a compact set and due to ({\bf F3}), there exists $\beta_0>0$ such that $\inf_{x\in \mathcal{M}^{(1)}} |\lambda_{r+1}(x)| \geq 4\beta_0.$ There exists $\delta_1$ with $0<\delta_1\leq\delta_0$ such that $\inf_{x\in \mathcal{N}_{\delta_1}(\mathcal{M}^{(1)})} |\lambda_{r+1}(x)| \geq 2\beta_0,$ which further implies that $\inf_{x\in \mathcal{M}_h} |\lambda_{r+1}(x)| \geq 2\beta_0,$ when $h$ is small enough. Then we must have $\sup_{x\in \mathcal{M}_h} \lambda_{r+1}(x) \leq -2\beta_0,$ since if there exists $x_0\in\mathcal{M}_h$ such that $\lambda_{r+1}(x_0) \geq 2\beta_0$, then Lemma~\ref{lambda2} would lead to 
%
\begin{align*}
\lambda_{r+1,h}(x_0)  \geq  \lambda_{r+1}(x_0) - |\lambda_{r+1}(x_0) - \lambda_{r+1,h}(x_0)| \geq 2\beta_0 + O(h^2) \geq \beta_0,
\end{align*}
when $h$ is small, which contradicts the definition of $\mathcal{M}_h$. Hence $\mathcal{M}_h\subset [\mathcal{N}_{\delta_0}(\mathcal{M}^{(1)})\cap \mathcal{M}^{(2)}] = \mathcal{N}_{\delta_0}(\mathcal{M})$, when $h$ is small enough. This is the assertion (ii). 

For assertion (iii), we have shown $\sup_{x\in \mathcal{M}_h} \lambda_{r+1,h}(x) \leq -\beta_0$ above. Using a similar argument, we get $\inf_{x\in \mathcal{M}_h} [\lambda_{j-1,h}(x) - \lambda_{j,h}(x) ] > \beta_0$, $j=r+1,\cdots,d$, when $h$ is small. 

To show that $\mathcal{M}_h$ is an $r$-dimensional manifold and has positive reach when $h$ is small, we use a similar argument as given in the proof of assertion (i). The key is to show that $f_h$ satisfies a similar property as in the assumption ({\bf F2}) for $f$, when $h$ is small. Let $l_i(x) = \nabla(\nabla f(x)^T v_{r+i}(x))$, $i=1,\cdots,d-r$ and $L(x)=(l_{1}(x),\cdots,l_{d-r}(x))$. Then ({\bf F2}) is equivalent to $\inf_{x\in\mathcal{M}} \text{det}(L(x)^TL(x)) >0$. Since $\mathcal{N}_{\delta_0}(\mathcal{M})$ is a compact set (or we can replace $\delta_0$ by a smaller value if necessary), we can find $\epsilon_0>0$ such that
\begin{align}\label{epsilon0const}
\inf_{x\in\mathcal{N}_{\delta_0}(\mathcal{M})} \text{det}(L(x)^TL(x)) \geq \epsilon_0.
\end{align}
Let $l_{i,h}(x) = \nabla(\nabla f_h(x)^T v_{h,r+i}(x))$, $i=1,\cdots,d-r$ and $L_h(x)=(l_{h,1}(x),\cdots,l_{h,d-r}(x))$. With (\ref{epsilon0const}) we have
\begin{align}
& \inf_{x\in\mathcal{N}_{\delta_0}(\mathcal{M})} \text{det}(L_h(x)^TL_h(x)) \nonumber \\
 \geq & \inf_{x\in\mathcal{N}_{\delta_0}(\mathcal{M})} \text{det}(L(x)^TL(x)) - \sup_{x\in\mathcal{N}_{\delta_0}(\mathcal{M})} | \text{det}(L(x)^TL(x))- \text{det}(L_h(x)^TL_h(x))| \nonumber\\
\geq & \epsilon_0 - O(h^2),\label{marginassump}
\end{align}
where we use Lemma~\ref{lambda2} given above and Theorem 3.3 of Ipsen and Rehman (2008), the latter gives a perturbation bound for matrix determinants. This then implies that there exists $\epsilon_1>0$ such that for $h$ small enough, $\inf_{x\in\mathcal{N}_{\delta_0}}\|l_{i,h}(x) \|>\epsilon_1.$ Following Lemma~\ref{lambda2}, $l_{i,h}(x)$ has a Lipschitz constant $C<\infty$ for all $x\in\mathcal{N}_{\delta_0}$. Also it is clear that there exists $\delta_1>0$ such that $\mathcal{M}_h\oplus\delta_1\subset \mathcal{N}_{\delta_0}(\mathcal{M})$ when $h$ is small enough. Define sets $\mathcal{M}_{i,h}=\{x\in\mathbb{R}^d: \; \nabla f_h(x)^T v_{h,r+i}(x) =0, \;  \lambda_{r+1,h}(x)<0\}.$ Note that $\mathcal{M}_{h}=\cap_{i=1}^{d-r} \mathcal{M}_{i,h}.$ Using Lemma 4.11 in Federer (1959), when $h$ is small enough such that $\mathcal{M}_h\subset \mathcal{N}_{\delta_0}(\mathcal{M})$, we have for $i=1,\cdots,d-r$,
\begin{align}\label{reachsingle}
\inf_{u\in \mathcal{M}_{h}} \Delta(\mathcal{M}_{i,h},u) \geq \min(\delta_1/2,\;  \epsilon_1/C).
\end{align}
Using (\ref{reachsingle}) and Theorem 4.10 in Federer (1959), we conclude the assertion (iv) by using a deduction argument, similar to the proof of Theorem 4.12 in Federer (1959).
%
\hfill$\square$ 
\end{proof}

%
%
%
%

\hspace{-13pt}{\bf Proof of Proposition~\ref{pointwisenormal}}

\begin{proof}
For any symmetric matrix $M$, let $G_i:\mathbb{R}^{d(d+1)/2}\mapsto \mathbb{R}^{d}$, for $i=1,\cdots,d$ be a vector field that maps $\text{vech} (M)$ to the $i$th unit eigenvector of $M$, such that $v_i(x)=G_i(d^2f(x))$, $v_{i,h}(x)=G_i(d^2f_h(x))$ and $\wh v_i(x)=G_i(d^2\wh f(x))$. Recall that the sign of $G_i(\text{vech} (M))$ is assumed to be determined such that $G_i(\text{vech} (M))$ is continuous as a function of a symmetric matrix $M$ when the $i$th eigenvalue of $M$ is simple. 
Also recall that $D$ is the duplication matrix. It has been shown on page 181, Magnus and Neudecker (1988) that for  $i=r+1,\cdots,d$,
\begin{align}
\nabla  G_i(d^2 f(x)) = (v_i(x)^T\otimes (\lambda_i(x) \mathbf{I}_d - \nabla^2 f(x))^+)D. 
\end{align}
Note that $\lambda_i(x) \mathbf{I}_d - \nabla^2 f(x) = \sum_{j\neq i}[(\lambda_i(x) - \lambda_j(x))v_j(x)v_j(x)^T]$. Due to the uniqueness property of pseudoinverse (page 37, Magnus and Neudecker, 1988), it is easy to verify that $(\lambda_i(x) \mathbf{I}_d - \nabla^2 f(x))^+ = \sum_{j\neq i}\left[\frac{1}{\lambda_i(x) - \lambda_j(x)}v_j(x)v_j(x)^T\right]$, and so $m_i(x)=\nabla G_i(d^2 f(x))^T\nabla f(x)$ for $m_i(x)$ in (\ref{pseudoinv}), $i=r+1,\cdots,d$. Using Taylor expansion, we have
\begin{align}\label{eigenvecdiff}
[\wh V(x)-V_h(x) ]^T& =  \begin{pmatrix}
    [G_{r+1}(d^2 \wh f(x)) - G_{r+1}(d^2 f_h(x))]^T \\
    \vdots\\
    [G_d(d^2 \wh f(x)) - G_d(d^2 f_h(x))]^T
  \end{pmatrix} \nonumber\\
 &= \begin{pmatrix}
   \Big(d^2 \wh f(x) - d^2 f_h(x)\Big)^T \nabla  G_{r+1}(d^2  f_h(x))^T\\
    \vdots\\
    \Big(d^2 \wh f(x) - d^2 f_h(x)\Big)^T\nabla  G_d(d^2 f_h(x))^T
  \end{pmatrix} +  O_p\left( (\gamma_{n,h}^{(2)})^2 \right),
\end{align}
where the order of the $O_p$-term is due to Lemma~\ref{lambda2} and the second-order derivatives of eigenvectors of symmetric matrices (see Dunajeva, 2004). Therefore using Lemma~\ref{lambda2} again we have
\begin{align}\label{ridgenessdiff}
& \wh V(x)^T\nabla\wh f(x) - V_h(x)^T\nabla f_h(x)\nonumber\\
%
%
= & [\wh V(x) -V_h(x)]^T\nabla f_h(x) +  \wh V(x)^T[\nabla\wh f(x) - \nabla f_h(x) ]\nonumber\\
= & M(x)^T\Big(d^2 \wh f(x) - d^2 f_h(x)\Big)  + O_p\left( \gamma_{n,h}^{(1)} + (\gamma_{n,h}^{(2)})^2\right),
\end{align}
where the $O_p$-term is uniform in $x\in\mathcal{M}_h$. This is (\ref{uniformapp}). Then (\ref{diffclt}) follows from Theorem 3 of Duong et al. (2008), which says 
\begin{align*}
\sqrt{nh^{d+4}}[d^2 \wh f(x) - d^2 f_h(x)] \rightarrow_D \mathscr{N}_{d(d+1)/2}(0,f(x)\mathbf{R}), \text{ as } n\rightarrow \infty.
\end{align*}

Next we show that $\Sigma(x)$ is positive definite for $x$ in a neighborhood of $\mathcal{M}$. Since $\Sigma(x)$ is a symmetric matrix, it suffices to show that $\lambda_{\min}(\Sigma(x)) >0$, where $\lambda_{\min}$ is the smallest eigenvalue of a symmetric matrix. First note that $\mathbf{R}$ is a positive definite matrix because for any $b\in\mathbb{R}^{d(d+1)/2}\backslash\{0\}$, $b^T\mathbf{R}b = \int_{\mathbb{R}^d} [b^T d^2K(u)]^2du>0$ using assumptions (\textbf{K1}) and (\textbf{K2}).  Denote $W(x)=(w_{r+1}(x),\cdots,w_d(x))$ with $w_i(x)=v_i(x)\otimes \sum_{j=1}^r \left[\frac{v_j(x)^T \nabla f(x)}{\lambda_i(x) - \lambda_j(x)}v_j(x)\right]$, $i=r+1,\cdots,d$, so that $M(x)=D^TW(x)$. Note that
\begin{align}
\lambda_{\min}(\Sigma(x)) & = \lambda_{\min}(W(x)^TD\mathbf{R}D^TW(x)) \nonumber\\
& = \inf_{a\in\mathbb{S}^{d-r-1}} a^TW(x)^TD\mathbf{R}D^TW(x)a \nonumber\\
& \geq \lambda_{\min}(\mathbf{R})  \inf_{a\in\mathbb{S}^{d-r-1}} \|D^TW(x)a\|^2.
%
\end{align}
Recall that $D^+$ is the pseudoinverse of $D$. It has full row rank and hence we have $0<\lambda_{\max}(D^+(D^+)^T)<\infty$, where $\lambda_{\max}$ is the largest eigenvalue of a symmetric matrix. We have $\|(D^+)^TD^TW(x)a\|^2\leq \lambda_{\max}(D^+(D^+)^T)\|D^TW(x)a\|^2$ and therefore
\begin{align}
\lambda_{\min}(\Sigma(x)) & \geq \frac{\lambda_{\min}(\mathbf{R})}{\lambda_{\max}(D^+(D^+)^T)}  \inf_{a\in\mathbb{S}^{d-r-1}} \|(D^+)^TD^TW(x)a\|^2.
\end{align}
Let $K_{d^2}$ be the $d^2\times d^2$ commutation matrix such that for any $d\times d$ matrix $A$, $K_{d^2}\text{vec}(A)=\text{vec}(A^T).$ It is known from Theorem 12 on page 57 of Magnus and Neudecker (1988) that $(D^+)^TD^T = \frac{1}{2}(\mathbf{I}_{d^2} + K_{d^2})$. Also $K_{d^2}$ has such a property that for any $p,q\in\mathbb{R}^d$, $K_{d^2}(p\otimes q)=q\otimes p$ (Theorem 9, page 55, Magnus and Neudecker, 1988). Therefore
\begin{align}
\lambda_{\min}(\Sigma(x)) & \geq \frac{\lambda_{\min}(\mathbf{R})}{4\lambda_{\max}(D^+(D^+)^T)}  \inf_{a\in\mathbb{S}^{d-r-1}} \|[W(x)+W^*(x)]a\|^2,
\end{align}
where $W^*(x) = (w_{r+1}^*(x),\cdots,w_d^*(x))$ with $w_i^*(x)=\sum_{j=1}^r \left[\frac{v_j(x)^T \nabla f(x)}{\lambda_i(x) - \lambda_j(x)}v_j(x)\right] \otimes v_i(x)$, $i=r+1,\cdots,d$. Denote $W^\dagger(x)=[W(x),W^*(x)]$ and $a^\dagger=(a^T,a^T)^T,$ so that $[W(x)+W^*(x)]a=W^\dagger(x)a^\dagger$. Note that the columns of $W^\dagger(x)$ are orthogonal and $\|w_i(x)\|^2=\|w_i^*(x)\|^2=\sum_{j=1}^r \left[\frac{v_j(x)^T \nabla f(x)}{\lambda_i(x) - \lambda_j(x)}\right]^2,$ $i=r+1,\cdots,d$. So we have
\begin{align}\label{minegienvalue}
\lambda_{\min}(\Sigma(x)) & \geq \frac{\lambda_{\min}(\mathbf{R})}{2\lambda_{\max}(D^+(D^+)^T)}  \lambda_{\min}[W^\dagger(x)^TW^\dagger(x)]  \nonumber\\
& = \frac{\lambda_{\min}(\mathbf{R})}{2\lambda_{\max}(D^+(D^+)^T)}  \min_{i\in\{r+1,\cdots,d\}}\sum_{j=1}^r \left[\frac{v_j(x)^T \nabla f(x)}{\lambda_i(x) - \lambda_j(x)}\right]^2.
\end{align}
Under the assumptions (\textbf{F3})-(\textbf{F5}), there exists $\delta_1>0$ such that $\inf_{x\in\mathcal{N}_{\delta_1}(\mathcal{M})} \lambda_{\min}(\Sigma(x))>0.$ In view of Lemma~\ref{lambda2bound}, we conclude that  $\Sigma(x)$ is positive definite for $x\in\mathcal{M}_h$, when $h$ is small enough.
%
\hfill$\square$

\end{proof}

\hspace{-12pt}{\bf Proof of Theorem~\ref{confidenceregion}}

To prove Theorem~\ref{confidenceregion}, we need the following lemma.
\begin{lemma}\label{posidefi}
Suppose assumptions ({\bf F1}) - ({\bf F5}), ({\bf K1}) - ({\bf K3}) hold. There exists $\delta_1>0$ such that for $x\in\mathcal{N}_{\delta_1}(\mathcal{M})$ and $z\in\mathbb{R}^d\backslash\{0\}$, $\Omega(x,z)$ in (\ref{omegaxz}) is positive definite.
\end{lemma}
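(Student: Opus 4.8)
\textbf{Proof proposal for Lemma~\ref{posidefi}.}

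The plan is to use the fact that $\Omega(x,z)$ is, by construction, a Gram-type integral: writing $A=A(x,z)=M(x)Q(x)z$ and $w(u)=\nabla d^2K(u)^TA\in\mathbb{R}^d$, we have $\Omega(x,z)=\int_{\mathbb{R}^d}w(u)w(u)^T\,du$, which is automatically positive semidefinite. Hence all the content is in ruling out a nontrivial null vector. For $b\in\mathbb{R}^d\setminus\{0\}$ one computes
\begin{align*}
b^T\Omega(x,z)b=\int_{\mathbb{R}^d}\big(A^T\nabla d^2K(u)\,b\big)^2\,du=\int_{\mathbb{R}^d}\big(b^T\nabla\psi_A(u)\big)^2\,du,
\end{align*}
where $\psi_A:=A^Td^2K=\langle A,\,\mathrm{vech}\,\nabla^2K\rangle$ is a scalar $C^2$ function by ({\bf K1}); here I contract the Jacobian of $d^2K$ with $A$ to recognize $A^T\nabla d^2K(u)b$ as the plain directional derivative of the scalar $\psi_A$ in direction $b$. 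Thus $b^T\Omega(x,z)b=0$ forces $b^T\nabla\psi_A\equiv 0$ on $\mathbb{R}^d$ (the a.e.\ statement coming from $\int(\cdot)^2=0$ is promoted to an everywhere statement using continuity of $\nabla\psi_A$).

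The steps, in order, are as follows. First I would fix $\delta_1>0$ so small that $\Sigma(x)=M(x)^T\mathbf{R}M(x)$ is positive definite and $f(x)>0$ on $\mathcal{N}_{\delta_1}(\mathcal{M})$; the first holds by Proposition~\ref{pointwisenormal} (shrinking its $\delta_1$ if necessary), and the second by continuity of $f$ together with the fact that a ridge point at which $f$ vanished would be an interior local minimum of $f\ge 0$, forcing $\lambda_{r+1}\ge 0$ and contradicting the ridge definition. Since $\mathbf{R}$ is positive definite (({\bf K1})--({\bf K2})) and $\Sigma(x)=M(x)^T\mathbf{R}M(x)$ is positive definite, $M(x)$ has full column rank $d-r$, and $Q(x)=[f(x)\Sigma(x)]^{-1/2}$ is a well-defined invertible matrix; hence $A(x,z)=M(x)Q(x)z\neq 0$ whenever $z\neq 0$. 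Next, because $K$ is supported on $\mathscr{B}(0,1)$ by ({\bf K1}), so is $\psi_A$; if $b^T\nabla\psi_A\equiv 0$ then $\psi_A$ is constant along every line in direction $b$, and each such line eventually exits $\mathscr{B}(0,1)$, where $\psi_A=0$, so $\psi_A\equiv 0$ on $\mathbb{R}^d$. Finally, applying ({\bf K2}) to any open ball $\mathcal{S}\subset\mathscr{B}(0,1)$, the linear independence of the components of $\mathbf{1}_{\mathcal{S}}\,d^2K$ forces $A(x,z)=0$, contradicting the previous step. Therefore $b^T\Omega(x,z)b>0$ for every $b\neq 0$, i.e.\ $\Omega(x,z)$ is positive definite on $\mathcal{N}_{\delta_1}(\mathcal{M})$. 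Uniformity in $z$ is automatic since $\Omega(x,tz)=t^2\Omega(x,z)$, so it suffices to treat $z\in\mathbb{S}^{d-r-1}$, and the above choice of $\delta_1$ does not depend on $z$.

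I expect the main obstacle to be organizational rather than substantive: one must ensure that a single $\delta_1$ can be taken (consistently with Proposition~\ref{pointwisenormal} and Lemma~\ref{lambda2bound}) on which $M(x)$ has constant rank $d-r$ and $f>0$, after which the argument is purely linear-algebraic plus the support/independence input from ({\bf K1})--({\bf K2}). The only mildly delicate analytic point is the passage "$b^T\nabla\psi_A=0$ a.e.\ $\Rightarrow$ $\psi_A\equiv 0$" via compact support, which is immediate once continuity of the derivatives of $K$ is invoked.
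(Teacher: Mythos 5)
Your proof is correct, and it takes a genuinely different route from the paper's. The paper rescales $\Omega(x,z)$ by $\int[K^{(\rho_2)}]^2$, carries out a combinatorial computation (organized via the index sets $\mathcal{I}^d$, $\mathcal{I}^o$ and the map $\pi$, with two tree diagrams to tabulate the inner products $w_{k_1k_2}^{(i,j)}$) to obtain explicit entries for the resulting matrix $P=(p_{ij})$ in terms of $a_K$, $b_K$ and the coordinates $t_k$ of $A(x,z)^TQ^{-1}$, and then exhibits a hand-built factorization $P = LL^T + S$ with $S=(a_K-1/b_K)\,\text{diag}(t_1^2,\dots,t_d^2)$; assumption ({\bf K3}) is invoked precisely so that $S\succeq 0$, and the remaining work is a case analysis showing $L$ has full row rank unless all $t_k$ vanish. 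Your argument sidesteps all of this by exploiting the Gram structure $\Omega(x,z)=\int ww^T\,du$ with $w=\nabla\psi_A$: a null direction $b$ forces $\partial_b\psi_A\equiv 0$ (continuity of $\nabla d^2K$ from ({\bf K1}) promotes the a.e.\ vanishing to everywhere), compact support of $K$ then kills $\psi_A$, and ({\bf K2}) gives $A(x,z)=0$, contradicting the full column rank of $M(x)Q(x)$ for $z\neq 0$ on a suitable $\mathcal{N}_{\delta_1}(\mathcal{M})$, which you correctly source from Proposition~\ref{pointwisenormal} and the $f>0$ observation. Two things are worth noting. First, your argument never touches ({\bf K3}), so it actually shows that ({\bf K3}) is not needed for this lemma; the paper's route needs it only to make the diagonal remainder $S$ positive semidefinite. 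Second, your proof is uniform in $d$ and avoids the combinatorial bookkeeping entirely, whereas the paper must treat $d=2$ and $d\ge 3$ separately and needs the auxiliary Lemma~\ref{kernelratio}. What the paper's version buys in compensation is the explicit formula (\ref{pijexpress}) for $\Omega(x,z)$ in terms of $a_K$ and $b_K$ (useful for computing $c_h^{(d,r)}$ in practice), but for positive definiteness alone your proof is the cleaner and more conceptual one.
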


\begin{proof}[\emph{ \normalfont PROOF OF LEMMA~\ref{posidefi}.}]
We need to introduce some notation first. Recall that for any $d\times d$ symmetric matrix $A$, $\text{vech}(A)$ is a half-vectorization of $A$, that is, it vectorizes only the lower triangular part of $A$ (including the diagonal of $A$). Let $\text{diag}(A)$ be the vector of the diagonal entries of $A$ and $\text{vech}_s(A)$ be the vectorization of the {\em strictly} lower triangular portion of $A$, which can be obtained from $\text{vech}(A)$ by eliminating all diagonal elements of $A$. Let $\text{dvech}(A)$ be a vectorization of the lower triangular portion of $A$, such that $\text{dvech}(A)= (\text{diag}(A)^T,\text{vech}_s(A)^T)^T$. Let $Q$ be a $[d(d+1)/2]\times[d(d+1)/2]$ matrix such that $\text{dvech}(A) = Q\,\text{vech}(A)$. Note that $Q$ is nonsingular. 

Let $\mathcal{I} = \mathcal{I}^d \cup \mathcal{I}^o$, where $\mathcal{I}^d=\{1,2,\cdots,d\}$ and $\mathcal{I}^o=\{d+1,d+2,\cdots,d(d+1)/2\}$, that is, $\mathcal{I}^d$ and $\mathcal{I}^o$ are index sets for $\text{diag}(A)$ and $\text{vech}_s(A)$ in $\text{dvech}(A)$, respectively. Suppose that $a_{l,m}$ is the element of $A$ at the $l$th row and $m$th column, for $1\leq l,m\leq d$. Define the map $\pi=(\pi_1,\pi_2): \mathcal{I}\mapsto \in \mathcal{I}^d\times \mathcal{I}^d$ such that the $k$th element of $\text{dvech}(A)$ is $a_{\pi_1(k),\pi_2(k)}$, $k\in\mathcal{I}$. For $k_1,k_2\in \mathcal{I}$, let $\pi_{\Delta}(k_1,k_2) = \{\pi_1(k_1), \pi_2(k_1)\}\Delta \{\pi_1(k_2), \pi_2(k_2)\}$, where $\Delta$ denotes symmetric difference, i.e., $A\Delta B=(A\backslash B)\cup (B\backslash A)$ for any two sets $A$ and $B$. For $i,j\in\mathcal{I}^d$, let $\pi_q^{-1}(i)=\{k\in\mathcal{I}:\pi_q(k)=i\}$, $q=1,2$, and  
\begin{align*}
\pi^{-1}(i,j)=
\begin{cases}
\pi_1^{-1}(i)\cap \pi_2^{-1}(j) & \text{ if } i\geq j,\\
\pi_1^{-1}(j)\cap \pi_2^{-1}(i) & \text{ if } i < j.
\end{cases}
\end{align*}
Note that $\pi^{-1}(i,j)=\pi^{-1}(j,i)$. Let $\pi_{\cup}^{-1}(i) = \pi_1^{-1}(i)\cup\pi_2^{-1}(i).$ Let $\delta(i,j)$ be the Kronecker delta. If $\mathcal{J}$ is a set, then let $\delta(i,\mathcal{J})=\mathbf{1}_\mathcal{J}(i)$, which is an indicator function regarding whether $i \in \mathcal{J}$. 
%
Let $\wt A(x,z): = A(x,z)^TQ^{-1}=(t_1(x,z),\cdots,t_{d(d+1)/2}(x,z)).$ Then we can write $\Omega(x,z) = \left[\int \nabla d^2 K(u)^T Q^T \wt A(x,z)^T \wt A(x,z) Q\nabla d^2 K(u) du \right],$ where we suppose that $\Omega_{i,j}(x,z)$ is at the $i$th row and $j$th column.
Let $\eta: \mathcal{I}^d\times\mathcal{I}^d\mapsto \mathbb{Z}_+^d$ be a map such that for $(l,m)\in\mathcal{I}^d\times\mathcal{I}^d$, $\frac{\partial^2 K(u)}{\partial u_l\partial u_m} = K^{(\eta(l,m))}(u)$, $u\in\mathbb{R}^d$ (see (\ref{multiindex})). Let $\Omega=(\Omega_{i,j})$. Then
\begin{align}\label{Omegaij}
\Omega_{i,j} = \sum_{(k_1,k_2)\in \mathcal{I}\times\mathcal{I}} w_{k_1k_2}^{(i,j)} t_{k_1} t_{k_2}, 
\end{align}
where $$w_{k_1k_2}^{(i,j)} =\int_{\mathbb{R}^d} \left[ \frac{\partial}{\partial u_i}  K^{(\eta(\pi(k_1)))}(u) \right] \left[ \frac{\partial}{\partial u_j} K^{(\eta(\pi(k_2)))}(u) \right] du.$$
Next we will show that we can write 
\begin{align}\label{Pmatrix}
%
\Omega(x,z) =\int_{\mathbb{R}^{d}}[K^{(\rho_2)}(s)]^2ds P(x,z) ,
\end{align}
where $P=(p_{ij})$ is a $d\times d$ matrix and $P$ is positive definite given the assumptions in this lemma. When $d=2$, it follows direct calculation using Lemma~\ref{kernelratio} that $P$ is given by
\begin{align*}
&p_{11} = a_K t_1^2 +t_2^2 +t_3^2 +2t_1t_3,\\
&p_{12} = p_{21} = 2t_1t_2 + 2t_2t_3,\\
&p_{22} = a_Kt_3^2 + t_2^2 +t_1^2 + 2t_1t_3.
\end{align*}
It is clear from Proposition~\ref{pointwisenormal} that $P(x,z)$ is a positive definite matrix for $x\in\mathcal{N}_{\delta_1}(\mathcal{M})$ for some $\delta_1>0$ and $z\in\mathbb{R}^d\backslash\{0\}$, when we assume that $a_K>1$. We consider $d\geq3$ below. Note that $w_{k_1k_2}^{(i,j)}\in\{\int [K^{(\rho_q)}(u)]^2du: q=1,2,3\}\cup\{0\}$ and we can determine the value of $w_{k_1k_2}^{(i,j)}$ using Lemma~\ref{kernelratio}. 
We split our discussion into two cases: $i=j$ and $i\neq j$. When $i=j$, the values of $w_{k_1k_2}^{(i,j)}$ can be determined by the following diagram. \\

\tikzstyle{level 1}=[level distance=4.5cm, sibling distance=3cm]
\tikzstyle{level 2}=[level distance=2.5cm, sibling distance=2cm]
\tikzstyle{level 3}=[level distance=4.5cm, sibling distance=2cm]

\tikzstyle{bag} = []
\tikzstyle{end} = [circle, minimum width=3pt,fill, inner sep=0pt]

\begin{tikzpicture}[grow=right, sloped,thick,scale=0.8, every node/.style={scale=0.8}]
\node[bag] {$w_{k_1k_2}^{(i,j)}=$}
    child {
            child {
                node[end, label=right:
                    {$0$}] {}
                edge from parent
                node[above] {$k_1\neq k_2$}
            }
            child {
		child{
			node[end, label=right:
                    	{$\int [K^{(\rho_2)}(u)]^2du$}] {}
			edge from parent
                		node[above] {$k_1\in \pi_\cup^{-1}(i)$}
		}
		child{
			node[end, label=right:
                    	{$\int [K^{(\rho_3)}(u)]^2du$}] {}
			edge from parent
                		node[above] {$k_1\notin \pi_\cup^{-1}(i)$}
		}
                edge from parent
                node[above] {$k_1=k_2$}
            }
            edge from parent 
            node[above] {$k_1,k_2\in\mathcal{I}^o$}
    }
                    child {
                node[end, label=right:
                    {$0$}] {}
                edge from parent
                node[above] {$k_1\in\mathcal{I}^d \;\&\; k_2\in\mathcal{I}^o$}
                node[below]  {or $k_2\in\mathcal{I}^d \;\&\; k_1\in\mathcal{I}^o$}
            }
    child {
            child {
            		child{
		    	node[end, label=right:
                    	{$\int [K^{(\rho_3)}(u)]^2du$}] {}
	                edge from parent
                	        node[above] {  $\;\;$   $k_1\neq i$ $\&$ $k_2\neq i$}
		   	 }
            	    child{
	    		node[end, label=right:
                    	{$\int [K^{(\rho_2)}(u)]^2du$}] {}
	                edge from parent
                	        node[above] {$k_1= i$ or $k_2=i$}
	    	    }
                edge from parent
                node[above] {$k_1\neq k_2$}
            }
                    child {
                    	child{
			node[end, label=right:
                    	{$\int [K^{(\rho_2)}(u)]^2du$}] {}
	                edge from parent
                	        node[above] {$k_1\neq i$}
			}
        			child{
			node[end, label=right:
                    	{$\int [K^{(\rho_1)}(u)]^2du$}] {}
	                edge from parent
                	        node[above] {$k_1=i$}
			}
                edge from parent
                node[above] {$k_1=k_2$}
            	}
        edge from parent         
            node[above] {$k_1,k_2\in\mathcal{I}^d$}
    };
\end{tikzpicture}
%

\hspace{-17pt}When $i\neq j$, the values of $w_{k_1k_2}^{(i,j)}$ can be determined by the following diagram. \\

\tikzstyle{level 1}=[level distance=4.5cm, sibling distance=3.5cm]
\tikzstyle{level 2}=[level distance=4.5cm, sibling distance=2cm]
\tikzstyle{level 3}=[level distance=3cm, sibling distance=2cm]

\begin{tikzpicture}[grow=right, sloped,thick,scale=0.8, every node/.style={scale=0.8}]
\node[bag] {$w_{k_1k_2}^{(i,j)}=$}
    child {
            	child{
			node[end, label=right:
                    	{$\int [K^{(\rho_3)}(u)]^2du$}] {}
			edge from parent         
                		node[above] {$\;\;\;$ $\pi_{\Delta}(k_1,k_2) = \{i,j\}$}
		}
		child{
	             	node[end, label=right:
                 	{$0$}] {}
		         edge from parent         
                		node[above] {$\pi_{\Delta}(k_1,k_2)\neq \{i,j\}$}
		}
            edge from parent 
            node[above] {$k_1,k_2\in\mathcal{I}^o$}
    }
        child {
        		child{
			child{
				node[end, label=right:
                    		{$\int [K^{(\rho_2)}(u)]^2du$}] {}
				edge from parent         
                			node[above] {$k_2\in\{i,j\}$}
			}
			child{
				node[end, label=right:
                    		{$\int [K^{(\rho_3)}(u)]^2du$}] {}
				edge from parent         
                			node[above] {$k_2\notin\{i,j\}$}
			}
			edge from parent         
                		node[above] {$k_1= \pi^{-1}(i,j)$}
		}
		child{
			node[end, label=right:
                 	{$0$}] {}
		         edge from parent         
                		node[above] {$\;\;\;\;\;\;\;$ $\pi^{-1}(i,j)\notin\{k_1,k_2\}$}
		}
		child{
			child{
				node[end, label=right:
                    		{$\int [K^{(\rho_2)}(u)]^2du$}] {}
				edge from parent         
                			node[above] {$k_1\in\{i,j\}$}
			}
			child{
				node[end, label=right:
                    		{$\int [K^{(\rho_3)}(u)]^2du$}] {}
				edge from parent         
                			node[above] {$k_1\notin\{i,j\}$}
			}
			edge from parent         
                		node[above] {$k_2= \pi^{-1}(i,j)$}
		}
                edge from parent
                node[above] {$k_1\in\mathcal{I}^d \;\&\; k_2\in\mathcal{I}^o$}
                node[below]  {or $k_2\in\mathcal{I}^d \;\&\; k_1\in\mathcal{I}^o$}
    }
        child {
             	node[end, label=right:
                 {$0$}] {}
                edge from parent         
                node[above] {$k_1,k_2\in\mathcal{I}^d$}
    };
\end{tikzpicture}    
$\;$\\

\hspace{-17pt} Plugging these values of $w_{k_1k_2}^{(i,j)}$ into (\ref{Omegaij}) we can show that the matrix $P$ in (\ref{Pmatrix}) is given by
\begin{align}\label{pijexpress}
p_{ij} = 
\begin{cases}
\Bigg[\sum\limits_{(k_1,k_2)\in\mathcal{I}^d\times\mathcal{I}^d} a_K^{\delta(i,k_1)\delta(i,k_2)} b_K^{(1-\delta(i,k_1))(1-\delta(i,k_2))(1-\delta(k_1,k_2))}t_{k_1}t_{k_2}   & \text{ if } i=j,\\
\quad\; +  \sum\limits_{k\in\mathcal{I}^o} b_K^{1-\delta(k,\pi_\cup^{-1}(i))} t_k^2 \Bigg]& \\[15pt]
2\sum\limits_{k\in\mathcal{I}^d}b_K^{1-\delta(k,\{i,j\})}t_kt_{\pi^{-1}(i,j)} + b_K\sum\limits_{k_1,k_2\in\mathcal{I}^o: \pi_{\Delta}(k_1,k_2)=\{i,j\}}t_{k_1}t_{k_2} & \text{ if } i \neq j.\\
\end{cases}
\end{align}

To prove that $P$ is positive definite under the given conditions, we will show that there exists a matrix $L$ such that  
\begin{align}\label{PmatrixDecomp}
P = LL^T + S,
\end{align}
where $S=(a_K-1/b_K)\text{diag}(t_1^2, t_2^2,\cdots, t_d^2).$
%
%
%
The matrix $L$ is in the form of $L = (L_1,L_2,L_3)$ and construction of three matrices $L_1,L_2$, $L_3$ is as follows. First $L_1=(l_{ij}^{(1)})$ is a $d\times d$ matrix where
\begin{align*}
l_{ij}^{(1)} = 
\begin{cases}
\frac{1}{\sqrt{b_K}} t_i + \sqrt{b_K} \sum\limits_{k\in\mathcal{I}^d\backslash\{i\}}t_k &  \text{ if } i=j \\
\sqrt{b_K}t_{\pi^{-1}(i,j)} &  \text{ if } i \neq j
\end{cases}.
\end{align*}
$L_2=(l_{ij}^{(2)})$ is a $d\times [d(d-1)(d-2)/6]$ matrix. Each of its $d \choose 3$ columns is constructed in the following way. Let $v=(v_1,\cdots,v_d)^T$ be a generic column. For any $1\leq j_1<j_2<j_3\leq d$,
\begin{align}\label{l1column}
v_i = 
\begin{cases}
\sqrt{b_K}t_{\pi^{-1}(j_2,j_3)}&  \text{ if }i= j_1\\
\sqrt{b_K}t_{\pi^{-1}(j_1,j_3)}&  \text{ if }i= j_2\\
\sqrt{b_K}t_{\pi^{-1}(j_1,j_2)}&  \text{ if } i= j_3\\
0 & \text{otherwise}
\end{cases}.
\end{align}
$L_3=(l_{ij}^{(3)})$ is a $d\times [d(d-1)]$ matrix. Each pair of its $2\times{d \choose 2}$ columns are constructed in the following way. Let $v^{(1)}=(v_1,\cdots,v_d)^T$ and $v^{(2)}=(v_1,\cdots,v_d)^T$ be a pair of generic columns. For any $1\leq j_1<j_2\leq d$,
\begin{align}\label{paircolumns}
v_i^{(1)} = 
\begin{cases}
\sqrt{1-b_K}t_{j_2} &  \text{ if }i= j_1\\
\sqrt{1-b_K}t_{\pi^{-1}(j_1,j_2)} &  \text{ if }i= j_2\\
0 & \text{otherwise}
\end{cases},\;
v_i^{(2)} = 
\begin{cases}
\sqrt{1-b_K}t_{\pi^{-1}(j_1,j_2)} &  \text{ if }i= j_1\\
\sqrt{1-b_K}t_{j_1} &  \text{ if }i= j_2\\
0 & \text{otherwise}
\end{cases}.
\end{align}

It is straightforward to verify that (\ref{PmatrixDecomp}) holds. The explicit expressions of $P$, $L$ and $S$ when $d=3$ are given as an example in the appendix in the supplementary material. 
%
%
%
%
%
%

To show that $P$ is positive definite, we only need to show that $L$ is full rank unless $t_k=0$ for all $k\in\mathcal{I}$.  
%
This can be seen from the following procedure. Let $e_i$ be the $i$th standard basis vector of $\mathbb{R}^d$, that is, its $i$th element is 1 and the rest are zeros. Denote the diagonal matrix $\frac{1}{\sqrt{b_K}}(t_1e_1,\cdots,t_de_d)$ by $\wt L_1$. Also denote $\wt L=(\wt L_1,L_2,L_3)$. Below we show that there exists a non-singular $d\times d$ matrix $M$ such that $\wt L=LM$, which implies that $L$ and $\wt L$ have the same rank. Here $M$ can be constructed by finding a sequence of elementary column operations on $L$, which transform $L_1$ into $\wt L_1$. Let $l_i^{(1)}$ and $l_i^{(3)}$ be the $i$th columns of $L_1$ and $L_3$, respectively. The transformation is achieved by simply noticing that  
\begin{align*}
l_i^{(1)} - \sum_{k:\; l_{ik}^{(3)} \in \mathcal{I}_d\backslash\{i\}}\sqrt{\frac{b_K}{1-b_K}} l_k^{(3)} = \frac{1}{\sqrt{b_K}} t_i e_i.
\end{align*}

Below we will show that if $t_1$, $t_2$, $\cdots$ $t_{d(d+1)/2}$ are not all zero, then there exists at least one column of $\wt L_1$, $L_2$ or $L_3$ in the form of $\sqrt{b_K}t_ke_i$ or $\sqrt{1-b_K}t_ke_i$ for some $t_k\neq 0$, for all $i=1,2,\cdots,d$, which implies that $L$ is full rank.  This is trivially true if none of $t_1,\cdots,t_d$ is zero. Now assume there is at least one of $t_1,\cdots,t_d$ is zero. Without loss of generality assume $t_1=0$ and we would like to show that there exists at least one column of $L_2$ or $L_3$ in the form of 
\begin{align}\label{basiscondition}
\sqrt{b_K}t_ke_1 \text{ or } \sqrt{1-b_K}t_ke_1,
\end{align}
for some $t_k\neq 0$. In the construction of the paired columns $v^{(1)}$ and $v^{(2)}$ of $L_3$ given in (\ref{paircolumns}), take $j_1=1$ and let $j_2$ be any integer such that $1<j_2\leq d$. If $t_{\pi^{-1}(1,j_2)}\neq 0$ then $v^{(2)}$ satisfies (\ref{basiscondition}); otherwise if $t_{j_2}\neq 0$ then $v^{(1)}$ satisfies (\ref{basiscondition}). If neither $v^{(1)}$ nor $v^{(2)}$ satisfies (\ref{basiscondition}), then we must have $t_{\pi^{-1}(1,k)}=t_k= 0$ for all $k\in\mathcal{I}^d$ (note that $t_{\pi^{-1}(1,1)}=t_1$), which is what we assume for the rest of the proof. Now we consider the columns in $L_2$. For $v$ given in (\ref{l1column}) we take $j_1=1$ and let $j_2$ and $j_3$ be any two integers satisfying $1<j_2<j_3\leq d$. Then there must exist $t_{\pi^{-1}(j_2,j_3)}\neq 0$ so that $v$ satisfies (\ref{basiscondition}), unless $t_k=0$ for all $k\in\mathcal{I}$, because $\mathcal{I} =\mathcal{I}^d  \cup \mathcal{I}^o$ and $\mathcal{I}^o = \{\pi^{-1}(i,j):\; 1\leq i < j \leq d\}$. 
%
%
\hfill$\square$

\end{proof}

\begin{proof}[$\;\;\;\;$\emph{ \normalfont PROOF OF THEOREM~\ref{confidenceregion}.}]
We first consider the case $d-r\geq 2$ and then briefly discuss the case $d-r=1$ at the end of the proof. For $g\in\mathcal{F}_h$, let $\sigma_{g} = \sqrt{\text{Var}(\mathbb{B}(g)) }$. First we want to show 
\begin{align}\label{normalizedgaussian}
\lim_{h\rightarrow 0} \mathbb{P}\left(\sup_{g\in\mathcal{F}_h} \sigma_{g}^{-1}\mathbb{B}(g)< b_h(z,c_h^{(d,r)}) \right) =  e^{-e^{-z}}.
\end{align}
We need to show that $B(x,z):=\sigma_{g_{x,z}}^{-1}\mathbb{B}(g_{x,z})$ for $g_{x,z}\in\mathcal{F}_h$ satisfies the conditions of the Gaussian fields in Theorem~\ref{ProbMain} in the supplementary material. Note that $r_h(x,\tilde x,z,\tilde z)$ is the covariance function between $B(x,z)$ and $B(\tilde x,\tilde z)$. Proposition~\ref{covstructure} and Lemma~\ref{posidefi} verify that $B(x,z)$, $(x,z)\in\mathcal{M}_h\times\mathbb{S}^{d-r-1}$ is local equi-$(\alpha_1,D_{x,z}^h,\alpha_2,B_{x,z})$-stationary (see the appendix in the supplementary material for the definition), where 
\begin{align}\label{specifica}
\alpha_1=\alpha_2=2, \; D_{x,z}^h = \frac{1}{\sqrt{2}}\Omega(x,z)^{1/2}, \text{ and } B_{x,z}=\frac{1}{\sqrt{2}}\mathbf{I}_{d-r}.
\end{align}

Note that (\ref{SupGauss2}) in Theorem~\ref{ProbMain} is clearly satisfied, simply because the kernel function $K$ is assumed to have bounded support in assumption ({\bf K1}). 
We only need to verify that $r_h$ satisfy (\ref{SupGauss1}). Recall that $\mathscr{B}(u,1)$ denotes a ball with center $u$ and unit radius. 
For any $\lambda\in\mathbb{R}$, let $\kappa(\lambda; X_1,x,\wt x,z,\wt z,h) = g_{x,z}(X_1) - \lambda g_{\tilde x,\tilde z}(X_1)$ and
\begin{align*}
\zeta(\lambda; X_1,x,\wt x,z,\wt z,h) = [\kappa(\lambda; X_1,x,\wt x,z,\wt z,h) - \mathbb{E} \kappa(\lambda; X_1,x,\wt x,z,\wt z,h)]^2.
\end{align*}
Then obviously $\mathbb{E}\zeta(\lambda; X_1,x,\wt x,z,\wt z,h) = \mathbb{E}[\kappa(\lambda; X_1,x,\wt x,z,\wt z,h)^2] - [\mathbb{E} \kappa(\lambda; X_1,x,\wt x,z,\wt z,h)]^2.$ Denote $B(x,\wt x, h)=\mathscr{B}(x,h)\cup\mathscr{B}(\wt x,h).$ Using the boundedness of the support of $K$ and the Cauchy-Schwarz inequality we have
\begin{align*}
&[\mathbb{E} \kappa(\lambda; X_1,x,\wt x,z,\wt z,h)]^2 \\
=&  \frac{1}{ h^d } \left\{\int_{\mathbb{R}^d} \left[\left\langle A(x,z), d^2K\left(\frac{x-s}{h}\right) \right\rangle -\lambda \left\langle A(\wt x,\wt z), d^2K\left(\frac{\wt x-s}{h}\right) \right\rangle\right] f(s)ds\right\}^2\\
=& \frac{1}{ h^d } \left\{\int_{B(x,\tilde x, h)} \left[\left\langle A(x,z), d^2K\left(\frac{x-s}{h}\right) \right\rangle -\lambda \left\langle A(\wt x,\wt z), d^2K\left(\frac{\wt x-s}{h}\right) \right\rangle\right] \sqrt{f(s)} \sqrt{f(s)} ds\right\}^2\\
\leq & \mathbb{E} [\kappa(\lambda; X_1,x,\wt x,z,\wt z,h)^2] F(x,\wt x,h),
\end{align*}
where $F(x,\wt x,h) = \int_{B(x,\tilde x, h)} f(s)ds = O(h^d)$, uniformly in $x$ and $\tilde x$. This implies that there exists $h_0>0$ such that for all $0<h\leq h_0$,
\begin{align}\label{zetabound}
\mathbb{E}\zeta(\lambda; X_1,x,\wt x,z,\wt z,h) \geq \frac{1}{2} \mathbb{E} [\kappa(\lambda; X_1,x,\wt x,z,\wt z,h)^2].
\end{align}
Denote $\Delta x = \wt x-x$ and $\Delta z=\wt z-z$. Using the bounded support of $K$ in assumption ({\bf K1}) again we have
%
%
%
%
%
\begin{align}
& \mathbb{E} [\kappa(\lambda; X_1,x,\wt x,z,\wt z,h)^2] \nonumber\\
\geq & \mathbb{E} \Big\{  \mathbf{1}_{\mathscr{B}(x,h)\backslash \mathscr{B}(\tilde x,h)}(X_1)\kappa(\lambda;X_1,x,\wt x,z,\wt z,h)^2\Big\}  \nonumber\\
=& \mathbb{E} \Big\{  \mathbf{1}_{\mathscr{B}(x,h)\backslash \mathscr{B}(\tilde x,h)}(X_1)\kappa(0;X_1,x,\wt x,z,\wt z,h)^2\Big\} \nonumber\\
=& \mathbb{E}\Big\{  \mathbf{1}_{\mathscr{B}(x,h)\backslash \mathscr{B}(\tilde x,h)}(X_1) \left[ g_{x,z}(X_1) \right]^2  \Big\} \nonumber\\
=& \int_{\mathscr{B}(0,1)\backslash \mathscr{B}(\Delta x/h,1)} \langle A(x,z), d^2  K(s)\rangle^2 f(x-hs)ds \nonumber\\
=& f(x)\int_{\mathscr{B}(0,1)\backslash \mathscr{B}(\Delta x/h,1)} \langle A(x,z), d^2  K(s)\rangle^2 ds +O(h),\label{InfIneq}
\end{align}
where in the last step we use a Taylor expansion for $f(x-hs)$ and the O-term is uniform in $x,\wt x\in\mathcal {M}_h$ and $z,\wt z\in\mathbb{S}^{d-r-1}$.

Note that for any $\delta>0$, if $\|\Delta x\|>h\delta$, then the set $\mathscr{B}(0,1)\backslash \mathscr{B}(\Delta x/h,1)$ contains a ball $\mathscr{B}^*$ with radius $\delta/2$. It follows that for any $x\in\mathcal{M}_h$ and $z\in\mathbb{S}^{d-r-1}$,
\begin{align*}
\inf_{\|\Delta x\|>h\delta}\int_{\mathscr{B}(0,1)\backslash \mathscr{B}(\Delta x/h,1)} \langle A(x,z), d^2  K(s)\rangle^2 ds\geq \int_{\mathscr{B}^*} \langle A(x,z), d^2  K(s)\rangle^2 ds. 
\end{align*}
Recall that $A(x,z)=M(x)Q(x)z$. As we have shown in the proof of Proposition~\ref{pointwisenormal}, $M(x)Q(x)$ is full rank for all $x\in\mathcal{N}_{\delta_0}(\mathcal{M})$ for some $\delta_0>0$. Then under assumptions (\textbf{K2}), there exists a constant $C>0$ such that the Lebesgue measure of $\{s\in {\mathscr B}^*: \langle A(x,z), d^2  K(s)\rangle^2 >C\}$ is positive for all $x\in\mathcal{N}_{\delta_0}(\mathcal{M})$ and $z\in\mathbb{S}^{d-r-1}$, because the sets $\mathcal{N}_{\delta_0}$ and $\mathbb{S}^{d-r-1}$ are compact. Therefore, 
\begin{align*}
 \inf_{x\in\mathcal{M}_h, z\in\mathbb{S}^{d-r-1}}\inf_{\|\Delta x\|>h\delta}\int_{\mathscr{B}(0,1)\backslash \mathscr{B}(\Delta x/h,1)} \langle A(x,z), d^2  K(s)\rangle^2 ds>0.
\end{align*}
%
%
%
which by (\ref{zetabound}) and (\ref{InfIneq}) further implies that for some $h_0>0$,
\begin{align}\label{InfIneq2}
\inf\limits_{\substack{x,\tilde x\in\mathcal {M}_h, z,\tilde z\in\mathbb{S}^{d-r-1} \\ \|\Delta x\|>h\delta, \|\Delta z\|>\delta, 0<h\leq h_0}} \mathbb{E}\zeta(\lambda;X_1,x,\wt x,z,\wt z,h) >0.
\end{align}
%
 %
%
%
%
%
Note that $\mathbb{E}\zeta(\lambda;X_1,x,\wt x,z,\wt z,h) =  \lambda^2\sigma_{g_{\tilde x,\tilde z}}^2 - 2\lambda\text{Cov}(g_{\tilde x,\tilde z}(X_1),g_{x,z}(X_1)) + \sigma_{g_{x,z}}^2,$
which is a quadratic polynomial in $\lambda$ and its
%
discriminant is given by
\begin{align*}
\sigma(x,\wt x,z,\wt z,h) = 4\text{Cov}(g_{\tilde x,\tilde z}(X_1),g_{x,z}(X_1)) - 4\sigma_{g_{x,z}}^2\sigma_{g_{\tilde x,\tilde z}}^2.
\end{align*}
%
%
Then (\ref{InfIneq2}) implies that
\begin{align*}
&\sup\limits_{\substack{x,\tilde x\in\mathcal {M}_h, z,\tilde z\in\mathbb{S}^{d-r-1} \\ \|\Delta x\|>h\delta, \|\Delta z\|>\delta, 0<h\leq h_0}}\sigma(x,\wt x,z,\wt z,h)<0,\\[-25pt]
\end{align*}
or equivalently,\\[-25pt]
%
\begin{align*}
&\sup\limits_{\substack{x,\tilde x\in\mathcal {M}_h, z,\tilde z\in\mathbb{S}^{d-r-1} \\ \|\Delta x\|>h\delta, \|\Delta z\|>\delta, 0<h\leq h_0}} |r_h(x,\wt x,z,\wt z)|<1.
\end{align*}

With $\beta_h =\sqrt{2r\log(h^{-1})} + \frac{1}{\sqrt{2r\log(h^{-1})}} \left[  \frac{d-2}{2} \log\log(h^{-1}) +c_h^{(d,r)} \right]$, applying Theorem~\ref{ProbMain}, we get 
\begin{align}\label{normalizedextreme}
\lim_{h\rightarrow 0} \mathbb{P} \left\{ \sqrt{2r\log{(h^{-1})}} \left( \sup_{g\in\mathcal{F}_h}\sigma_{g}^{-1}\mathbb{B}(g) -\beta_h\right) \leq z \right\} =  e^{-e^{-z}},
\end{align}
where in the calculation of $c_h^{(d,r)}$ we use (\ref{specifica}) and $H_{m}^{(2)}=\pi^{-m/2}$ for any $m\in\mathbb{Z}^+$, which is a well-known fact for Pickands' constant (cf. page 31, Piterbarg, 1996). This is just (\ref{normalizedgaussian}). \\

For $g_{x,z}\in\mathcal{F}_h$ we have
\begin{align*}
&\sigma_{g_{x,z}}^2  \\
=&\mathbb{E} [g_{x,z}(X_1)^2] - [\mathbb{E}g_{x,z}(X_1)]^2\\ 
=& \frac{1}{h^d} \int_{\mathbb{R}^d} \left\langle A(x,z), d^2K\left(\frac{x-u}{h}\right) \right\rangle^2 f(u)du- \frac{1}{h^d} \left[\int_{\mathbb{R}^d} \left\langle A(x,z), d^2K\left(\frac{x-u}{h}\right) \right\rangle f(u)du\right]^2\\
=& \int_{\mathbb{R}^d} \left\langle A(x,z), d^2K\left(u\right) \right\rangle^2 f(x-hu)du - h^d \left[ \int_{\mathbb{R}^d} \left\langle A(x,z), d^2K\left(u\right) \right\rangle f(x-hu)du\right]^2 \\
=& 1 + O(h^2),
\end{align*}
where the O-term is uniform in $x$ and $z$. Note that (\ref{normalizedextreme}) implies that $\sup_{g\in\mathcal{F}_h}|\sigma_{g}^{-1}\mathbb{B}(g)| = O_p(\sqrt{\log{h^{-1}}})$ and hence
\begin{align*}
\left|\sup_{g\in\mathcal{F}_h}\mathbb{B}(g) - \sup_{g\in\mathcal{F}_h}\sigma_{g}^{-1}\mathbb{B}(g) \right| \leq \sup_{g\in\mathcal{F}_h}|(\sigma_{g}-1)\sigma_{g}^{-1}\mathbb{B}(g)| =O_p(h^2\sqrt{\log{h^{-1}}}). 
\end{align*}

We then get (\ref{dnxextreme}) by using (\ref{normalizedextreme}). As a direct consequence of Theorem~\ref{gassianapprox}, for $D_n$ defined in (\ref{Dnx}) we have
\begin{align}\label{dnxasympt}
\mathbb{P}\left(\sqrt{nh^{d+4}}\sup_{x\in\mathcal{M}_h} D_n(x)\leq b_h(z,c_h^{(d,r)})\right) \rightarrow e^{-e^{-z}}.
\end{align}

Next we show (\ref{asympconf}). 
%
%
%
It follows from Proposition~\ref{lambda2} and Lemma~\ref{lambda2bound} that 
\begin{align}\label{probabilityone}
\mathbb{P}(\mathcal{M}_h \subset \{x\in\mathcal{H}: \; \wh\lambda_{r+1}(x)<0\}) \rightarrow 1.
\end{align}
Let $\wh C_{n,h}^*(a)=\{x\in\mathcal{H}:\;   \sqrt{nh^{d+4}} B_n(x) \leq a \}.$ Then by (\ref{probabilityone}) we get 
\begin{align}
\sup_{a\geq 0} \left|\mathbb{P}(\mathcal{M}_h \subset \wh C_{n,h}(a)) -  \mathbb{P}(\mathcal{M}_h \subset \wh C_{n,h}^*(a)) \right| \rightarrow 0.
\end{align}
Furthermore it is clear that $\mathbb{P}(\mathcal{M}_h  \subset \wh C_{n,h}^*(a)) = \mathbb{P}(\sqrt{nh^{d+4}}\sup_{x\in\mathcal{M}_h} B_n(x) \leq a )$ for all $a\geq 0$. 
%
%
By applying Proposition~\ref{ridgenessest} and (\ref{dnxasympt}), we finish the proof of (\ref{asympconf}) for the case $d-r\geq 2$. When $d-r=1$, the covariance structure of $\mathbb{B}$ is simplified (see Remark~\ref{dr1}). Then instead of using Theorem~\ref{ProbMain}, we apply the main theorem in Qiao and Polonik (2018). The rest of the proof is similar to the above. 
 \hfill$\square$
\end{proof}

\section*{References}
\begin{description}
\itemsep0em 
\item Arias-Castro, E., Donoho, D.L. and Huo, X. (2006). Adaptive multiscale detection of filamentary structures in a background of uniform random points. {\em Ann. Statisti.} {\bf 34} 326-349.
\item Baddeley, A.J. (1992). Errors in binary images and $L^p$ version of the Hausdorff metric. {\em Nieuw Archief Voor Wiskunde} {\bf 10} 157-183.
\item Bickel, P. and Rosenblatt, M. (1973). On some global measures of the deviations of density function estimates. {\em The Annals of Statistics}, {\bf 1}, 1071-1095.
%
%
\item Bugni, F. (2010). Bootstrap inference in partially identified models defined by moment inequalities: coverage of the identified set, {\em Econometrica} {\bf 76} 735-753.
\item Cadre, B. (2006). Kernel estimation of density level sets. {\em J. Multivariate Anal.} {\bf 97} 999-1023.
%
%
\item Chen, Y.-C., Genovese, C. and Wasserman, L. (2015). Asymptotic theory for density ridges. {\em The Annals of Statistics}, {\bf 43}(5), 1896-1928.
\item Cheng, M.-Y., Hall, P. and Hartigan, J.A. (2004). Estimating gradient trees. In {\em A Festschrift for Herman Rubin Institute of Mathematical Statistics Lecture Notes - Monograph Series} {\bf 45} 237-249. IMS, Beachwood, OH.
\item Chernozhukov, V., Chetverikov, D. and Kato, K. (2014). Gaussian approximation of suprema of empirical processes. {\em Ann. Statist.} {\bf 42}, 1564-1597.
\item Comaniciu, D. and Meer, P. (2002). A robust approach toward feature space analysis. {\em IEEE Trans. Pattern and Analysis and Machine Intelligence}, {\bf 24} 603-619.
%
%
\item Duong, T., Cowling, A., Koch, I., and Wand, M.P. (2008). Feature significance for multivariate kernel density estimation, {\em Computational Statistics \& Data Analysis}, {\bf 52}, 4225-4242.
\item Eberly, D. (1996). {\em Ridges in Image and Data Analysis}. Kluwer, Boston, MA.
\item Federer, H. (1959). Curvature measures. {\em Trans. Amer. Math. Soc.} {\bf 93} 418-491.
\item Genovese, C. R. and Perone-Pacifico, M. and Verdinelli, I. and Wasserman, L. (2009). On the path density of a gradient field. {\em Ann. Statist.} {\bf 37}, 3236-3271.
\item Genovese, C. R. and Perone-Pacifico, M. and Verdinelli, I. and Wasserman, L. (2012). The geometry of nonparametric filament estimation. {\em J. Amer. Statist. Assoc.} {\bf 107}, {788-799}.
\item Genovese, C. R. and Perone-Pacifico, M. and Verdinelli, I. and Wasserman, L. (2014). Nonparametric ridge estimation. {\em Ann. Statist.} {\bf 42}, 1511-1545.
\item Genovese, C., Perone-Pacifico, M., Verdinelli, I. and Wasserman, L. (2017). Finding singular features. {\em Journal of Computational and Graphical Statistics}, {\bf 26}(3), 598-609.
\item Hall, P. (1992). Effect of Bias estimation on coverage accuracy of bootstrap confidence intervals for a probability density. {\em Ann. Statist.} {\bf 20}, 675-694.
%
\item Hall, P., Qian, W. and Titterington, D. M. (1992). Ridge finding from noisy data. {\em J. Comp. Graph. Statist.}, {\bf 1}, 197-211.
\item Hartigan, J. A. (1987). Estimation of a convex density contour in two dimensions. {\em J. Amer. Statist. Assoc}, {\bf 82}, {267-270}.
\item Konakov, V. D., Piterbarg, V. I. (1984). On the convergence rate of maximal deviation distribution for kernel regression estimate. {\em Journal of Multivariate Analysis}, {\bf 15}, 279-294.
\item Li, W. and Ghosal, S. (2019). Posterior Contraction and Credible Sets for Filaments of Regression Functions. {\em Arxiv: 1803.03898}
\item Magnus, X. and Neudecker, H. (2007). {\em Matrix Differential Calculus with Applications in Statistics and Econometrics}, 3rd edition, John Wiley \& Sons, Chichester.
\item  Mammen, E. and Polonik, W. (2013). Confidence sets for level sets. {\em Journal of Multivariate Analysis} {\bf 122} 202-214.
\item Mason, D.M. and Polonik, W. (2009). Asymptotic normality of plug-in level set estimates. {\em The Annals of Applied Probability} {\bf 19} 1108-1142.
%
%
\item Ozertem, U. and Erdogmus, D. (2011). Locally defined principal curves and surfaces.  {\em Journal of Machine Learning Research}, {\bf 12}, 1249-1286.
\item Piterbarg, V.I. (1994). High excursions for nonstationary generalized chi-square processes. {\em Stochastic Processes and their Applications}, {\bf 53}, 307-337.
\item Piterbarg, V.I. (1996). \emph{Asymptotic Methods in the Theory of Gaussian Processes and Fields}, Translations of Mathematical Monographs, Vol. 148, American Mathematical Society, Providence, RI.
\item Polonik, W. (1995). Measuring mass concentrations and estimating density contour clusters - an excess mass approach. {\em Ann. Statist.} {\bf 23} 855-881.
\item Polonik, W. and Wang, Z. (2005). Estimation of regression contour clusters: an application of the excess mass approach to regression. {\em Journal of Multivariate Analysis}, {\bf 94} 227-249.
%
%
\item Qiao, W. (2019a). Nonparametric estimation of surface integrals on density level sets. {\em arXiv: 1804.03601}.
\item Qiao, W. (2019b). Extremes of locally stationary chi-fields on manifolds, {\em preprint}.
\item Qiao, W. and Polonik, W. (2016). Theoretical analysis of nonparametric filament estimation. {\em The Annals of Statistics}, {\bf 44}(3), 1269-1297.
\item Qiao, W. and Polonik, W. (2018). Extrema of rescaled locally stationary Gaussian fields on manifolds, {\em Bernoulli}, {\bf 24}(3), 1834-1859.
\item Qiao, W. and Polonik, W. (2019). Nonparametric confidence regions for level sets: statistical properties and geometry. {\em Electronic Journal of Statistics}, {\bf 13}(1), 985-1030.
\item Rosenblatt, M. (1976). On the maximal deviation of $k$-dimensional density estimates. {\em Ann. Probab.}, {\bf 4}, 1009--1015.
\item Sousbie, T., Pichon, C., Colombi, S., Novikov, D. and Pogosyan, D. (2008). The 3D skeleton: tracing the filamentary structure of the Universe. {\em Mon. Not. R. Astron. Soc. } {\bf 383} 1655-1670.
%
%
\item Tsybakov, A.B. (1997). Nonparametric estimation of density level sets. {\em Ann. Statist.} {\bf 25} 948-969.
%
%
\item von Luxburg, U. (2007). A tutorial on spectral clustering. {\em Stat. Comput.} {\bf 17} 395-416.
\item Wegman, E. J., Carr, D. B. and Luo, Q. (1993). Visualizing multivariate data. In {\em Multivariate Analysis: Future Directions}, Ed. C. R. Rao, 423-466. North Holland, Amsterdam.
\item Wegman, E.J. and Luo, Q. (2002). Smoothings, ridges, and bumps. In {\em Proceedings of the ASA (published on CD). Development of the relationship between geometric aspects of visualizing densities and density approximators, and a discussion of rendering and lighting models, contouring algorithms, stereoscopic display algorithms, and visual design considerations} 3666-3672. American Statistical Association.
\item Xia, Y. (1998). Bias-corrected confidence bands in nonparametric regression. {\em J. R. Statist. Soc. B} {\bf 60} 797-811.
\end{description}

\newpage
\begin{center}
{\Large \bf  Supplementary Material for ``Asymptotic Confidence Regions for Density Ridges''}\\
\end{center}

\begin{center}
{BY WANLI QIAO}
\end{center}

This supplementary material presents the proofs of some theoretical results that are not shown in Section~\ref{proofssec} due to the page constraints (Appendix A), as well as some miscellaneous results (Appendix B).

%
%

\section*{Appendix A: Technical proofs}
We need the following basic lemma to prove some of the results in the manuscript.
 
\begin{lemma}\label{lambda2}
Under assumptions ({\bf F1}), ({\bf F4}) and ({\bf K1}), as $n\rightarrow\infty$ and $h\rightarrow0$, 
\begin{align}
&  \sup_{x\in\mathcal{H}} \|\nabla \wh f(x) - \nabla f_h(x) \| =O_{a.s.}\left(\gamma_{n,h}^{(1)}\right), \;\; \sup_{x\in\mathcal{H}} \| \nabla f_h(x) - \nabla f(x)\| = O \left(h^2 \right), \label{unirate1}\\
&  \sup_{x\in\mathcal{H}} \|\nabla^2 \wh f(x) - \nabla^2 f_h(x) \| =O_{a.s.}\left(\gamma_{n,h}^{(2)}\right), \;\; \sup_{x\in\mathcal{H}} \| \nabla^2 f_h(x) - \nabla^2 f(x) \| = O \left(h^2 \right), \label{unirate2}\\
&\sup_{x\in\mathcal{H}} | \wh\lambda_{r+1}(x) - \lambda_{r+1,h}(x)| = O_{a.s.} \left( \gamma_{n,h}^{(2)}\right), \label{unirate3}\\
&\sup_{x\in\mathcal{H}}|\lambda_{r+1,h}(x) - \lambda_{r+1}(x)| = O \left(h^2 \right), \label{unirate4}\\
&\sup_{x\in\mathcal{N}_{\delta_0}(\mathcal{M})} \|\wh V(x)^T\nabla\wh f(x) - V_h(x)^T\nabla f_h(x)\| = O_{a.s.} \left( \gamma_{n,h}^{(2)}\right), \label{unirate5}\\
&\sup_{x\in\mathcal{N}_{\delta_0}(\mathcal{M})} \|V_h(x)^T\nabla f_h(x) - V(x)^T\nabla f(x) \| = O \left(h^2 \right), \label{unirate6}\\
%
%
%
%
%
&\sup_{x\in\mathcal{N}_{\delta_0}(\mathcal{M})} \|\nabla(\wh V(x)^T\nabla\wh f(x)) - \nabla(V_h(x)^T\nabla f_h(x))\|_{\max} = O_{a.s.} \left( \gamma_{n,h}^{(3)} \right), \label{unirate7}\\
&\sup_{x\in\mathcal{N}_{\delta_0}(\mathcal{M})} \|\nabla(V_h(x)^T\nabla f_h(x)) - \nabla(V(x)^T\nabla f(x)) \|_{\max} = O \left(h^2 \right). \label{unirate8}
\end{align}
Also for $\theta\in\mathbb{Z}_+^d$ with $|\theta|$=3 or 4, we have
\begin{align}\label{unirate9}
\sup_{x\in\mathcal{H}} \|\wh f^{(\theta)}(x) - f_h^{(\theta)}(x) \| =O_{a.s.}\left(\gamma_{n,h}^{(|\theta|)}\right).
\end{align}
If we further assume that both $f$ and $K$ are six times continuously differentiable, then for $\theta\in\mathbb{Z}_+^d$ with $|\theta|$=3 or 4, we have
\begin{align}
\sup_{x\in\mathcal{H}} \| f_h^{(\theta)}(x) - f^{(\theta)}(x)\| = O \left(h^2 \right). \label{unirate10}
\end{align}

\end{lemma}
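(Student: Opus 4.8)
The plan is to reduce the lemma to two kinds of estimate --- (a) uniform-in-$x$ deviation bounds for partial derivatives of $\wh f$ around $f_h=\mathbb{E}\wh f$, and (b) Taylor-expansion bias bounds for partial derivatives of $f_h$ around $f$ --- and then to feed these into standard matrix perturbation theory for the eigen-decomposition of the Hessian: Weyl's inequality for eigenvalues, and Davis--Kahan together with the explicit derivative formula $\nabla G_i(H)=(v_i^T\otimes(\lambda_i\mathbf{I}_d-H)^+)D$ (recorded in the proof of Proposition~\ref{pointwisenormal}) for eigenvectors.

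For the stochastic parts, write $\wh f^{(\gamma)}(x)-f_h^{(\gamma)}(x)=\frac{1}{nh^{d+|\gamma|}}\sum_{i=1}^n\big\{K^{(\gamma)}\big(\tfrac{x-X_i}{h}\big)-\mathbb{E}K^{(\gamma)}\big(\tfrac{x-X_1}{h}\big)\big\}$. By ({\bf K1}) the functions $K^{(\gamma)}$, $|\gamma|\le4$, are continuous with support $\mathscr{B}(0,1)$, so the translation classes $\{K^{(\gamma)}((x-\cdot)/h):x\in\mathcal{H}\}$ are uniformly bounded of polynomial uniform entropy, with envelope and entropy bounds not depending on $h$; a standard maximal inequality for empirical processes (e.g.\ Talagrand's concentration inequality together with a Dudley-type entropy bound, as in the Bickel--Rosenblatt-type strong uniform consistency results for kernel estimators of derivatives) then gives $\sup_{x\in\mathcal{H}}|\wh f^{(\gamma)}(x)-f_h^{(\gamma)}(x)|=O_{a.s.}(\gamma_{n,h}^{(|\gamma|)})$ for all $|\gamma|\le4$, which is the first half of (\ref{unirate1})--(\ref{unirate2}) and all of (\ref{unirate9}). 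For the bias parts, $f_h^{(\gamma)}(x)-f^{(\gamma)}(x)=\int K(s)\{f^{(\gamma)}(x-hs)-f^{(\gamma)}(x)\}\,ds$; spherical symmetry of $K$ annihilates the first-order Taylor term, and $f\in C^4$ (resp.\ $C^6$) makes the remainder $O(h^2)$ uniformly on $\mathcal{H}$ for $|\gamma|\le2$ (resp.\ $|\gamma|\le4$), which gives the second half of (\ref{unirate1})--(\ref{unirate2}) and (\ref{unirate10}); carrying the same expansion one order further and using uniform continuity of $\nabla^4 f$ on the compact set $\mathcal{H}$ controls the third-order bias that enters (\ref{unirate8}).

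The eigenvalue lines (\ref{unirate3})--(\ref{unirate4}) follow because Weyl's inequality makes each eigenvalue $1$-Lipschitz in the Frobenius norm of its matrix argument, so $\sup_{x\in\mathcal{H}}|\wh\lambda_{r+1}(x)-\lambda_{r+1,h}(x)|\le\sup_{x\in\mathcal{H}}\|\nabla^2\wh f(x)-\nabla^2 f_h(x)\|_F$ and similarly for $f_h$ versus $f$, after which one invokes the Hessian bounds above. For the eigenvector-valued quantities (\ref{unirate5})--(\ref{unirate8}) I would first use ({\bf F3}) to replace $\mathcal{N}_{\delta_0}(\mathcal{M})$ by a slightly smaller compact set $N$ on which ({\bf F4}) and compactness force the eigenvalue gaps $\lambda_r-\lambda_{r+1},\lambda_{r+1}-\lambda_{r+2},\dots,\lambda_{d-1}-\lambda_d$ to be bounded below; by the bounds just established, the same holds for $\nabla^2 f_h$ when $h$ is small and, a.s.\ for large $n$, for $\nabla^2\wh f$, so on $N$ the orientation-fixed branch $H\mapsto(v_{r+1}(H),\dots,v_d(H))$ is well defined and $C^1$ with deterministic Lipschitz derivative. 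Hence $\sup_N\|\wh V-V_h\|=O_{a.s.}(\gamma_{n,h}^{(2)})$ and $\sup_N\|V_h-V\|=O(h^2)$, and differentiating once more --- which brings in $\nabla^3\wh f-\nabla^3 f_h=O_{a.s.}(\gamma_{n,h}^{(3)})$, resp.\ a third-order bias --- gives the analogous bounds for the spatial gradients of these frames. Writing $\wh V^T\nabla\wh f-V_h^T\nabla f_h=(\wh V-V_h)^T\nabla f_h+\wh V^T(\nabla\wh f-\nabla f_h)$ and expanding its gradient by the product rule, (\ref{unirate5})--(\ref{unirate8}) follow, the dominant rate being the slowest term present: $\gamma_{n,h}^{(2)}$ in (\ref{unirate5}) and $\gamma_{n,h}^{(3)}$ in (\ref{unirate7}), since $\gamma_{n,h}^{(1)}\ll\gamma_{n,h}^{(2)}\ll\gamma_{n,h}^{(3)}$ as $h\to0$, and $O(h^2)$ in the corresponding bias lines.

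The main obstacle is not any single estimate --- each is either a citable maximal inequality or a routine Taylor/perturbation bound --- but the uniformity bookkeeping in the eigen-decomposition step: one must check that the eigenvalue-gap lower bound genuinely transfers from $f$ to $f_h$ and to $\wh f$ on one fixed neighborhood (so that the smooth eigenvector branch is defined there with deterministic Lipschitz constants), fix the signs so that $\wh v_i$, $v_{i,h}$, $v_i$ are consistently oriented, and then chase the Lipschitz constants through the chain and product rules while tracking which error term dominates. The only other point requiring care is that $\mathcal{N}_{\delta_0}(\mathcal{M})$ in (\ref{neighborhood}) is not closed, the inequality $\lambda_{r+1}<0$ being strict; ({\bf F3}) is exactly what lets one shrink it to a compact set first.
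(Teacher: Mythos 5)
Your proposal reconstructs, in more detail, exactly what the paper does: the paper's own proof is essentially a string of citations (Gin\'{e}--Guillou / Einmahl--Mason / Arias-Castro et al.\ for the a.s.\ uniform rates in (\ref{unirate1}), (\ref{unirate2}), (\ref{unirate9}); Weyl's inequality for (\ref{unirate3})--(\ref{unirate4}); smoothness of the eigenvector map under ({\bf F4}) for (\ref{unirate5})--(\ref{unirate8})), with the bias halves left to the reader as you worked them out. One small discrepancy worth noting: you invoke ({\bf F3}) to compactify $\mathcal{N}_{\delta_0}(\mathcal{M})$ so the eigenvalue-gap lower bound is uniform, but ({\bf F3}) is not among the lemma's stated hypotheses (({\bf F1}), ({\bf F4}), ({\bf K1})); the paper itself papers over this by taking the smoothness of the eigenvector map on $\mathcal{N}_{\delta_0}(\mathcal{M})$ for granted under ({\bf F4}) (and, in the proof of Lemma~\ref{lambda2bound}, by declaring $\mathcal{N}_{\delta_0}(\mathcal{M})$ compact ``or replace $\delta_0$ by a smaller value''), so your version is if anything more honest about where that uniformity comes from.
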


\begin{proof}
The rate of the strong uniform convergence of the kernel density estimation can be found in e.g. Gin\'{e} and Guillou (2002), and Einmahl and Mason (2005). Their results can be extended to the rates for density derivative estimation as shown in (\ref{unirate1}), (\ref{unirate2}) and (\ref{unirate9}). See Lemmas 2 and 3 in Arias-Castro et al. (2016). Using (\ref{unirate2}), results (\ref{unirate3}) and (\ref{unirate4}) follows from the fact that eigenvalues are Lipschitz continuous on real symmetric matrices (Weyl inequality, cf. page 57, Serre, 2002). The rates of convergence for differences involving eigenvectors in (\ref{unirate5}), (\ref{unirate6}), (\ref{unirate7}) and (\ref{unirate8}) follow from the fact that the last $d-r$ eigenvectors are infinitely differentiable functions of the Hessian for $x\in\mathcal{N}_{\delta_0}(\mathcal{M})$ for some $\delta_0>0$, under assumption ({\bf F4}). \hfill$\square$ 
\end{proof}

%
%

\hspace{-12pt}{\bf Proof of Proposition~\ref{ridgenessest}}
\begin{proof}
Noticing that $Q(x)$ is positive definite for $x\in\mathcal{M}_h$ when $h$ is small enough due to Proposition~\ref{pointwisenormal}, (\ref{ridgenessest0}) immediately follows from Lemmas~\ref{lambda2bound} and \ref{lambda2}. Next we show (\ref{ridgenessest2}). It follows from Lemmas~\ref{lambda2} and \ref{lambda2bound} that 
\begin{align}\label{varmatrixrate}
\sup_{x\in\mathcal{N}_{\delta_0}(\mathcal{M})} \| [\wh f(x)\wh\Sigma(x)]- [f(x) \Sigma(x)] \|_F =O_p\left( \gamma_{n,h}^{(2)} +h^2\right).
\end{align}

Since $Q_n(x) - Q(x) = Q_n(x)\{[\wh f(x)\wh\Sigma(x)]^{1/2}- [f(x) \Sigma(x)]^{1/2}\}Q(x)$, using the perturbation bound theory for square roots of positive definite matrices (Theorem 6.2 in Higham, 2008), we have
\begin{align*}
& \| Q_n(x) - Q(x) \|_F\\
\leq &\|Q_n(x)\|_F \|Q(x)\|_F \| [\wh f(x)\wh\Sigma(x)]^{1/2}- [f(x) \Sigma(x)]^{1/2} \|_F\\
\leq &  \frac{\|Q_n(x)\|_F \|Q(x)\|_F}{\lambda_{\min}([\wh f(x)\wh\Sigma(x))]^{1/2} + [\lambda_{\min}(f(x) \Sigma(x))]^{1/2} } \| [\wh f(x)\wh\Sigma(x)]- [f(x) \Sigma(x)] \|_F.
\end{align*}
Therefore by (\ref{varmatrixrate}) and Proposition~\ref{pointwisenormal} we have 
\begin{align}\label{ridgenessest1}
\sup_{x\in\mathcal{N}_{\delta_0}(\mathcal{M})}  \| Q_n(x) - Q(x) \|_F = O_p\left(\gamma_{n,h}^{(2)} + h^2\right).
\end{align}
Denote $E_n(x) =  \wh V(x)^T\nabla\wh f(x) - V_h(x)^T\nabla f_h(x)$. We have $\sup_{x\in\mathcal{N}_{\delta_0}(\mathcal{M})} \left\| E_n(x)\right\| = O_p( \gamma_{n,h}^{(2)})$ by Lemma~\ref{lambda2}. For any $\mathcal{A}\subset \mathcal{N}_{\delta_0}(\mathcal{M})$, notice that
\begin{align}\label{combeq1}
&\left|\sup_{x\in\mathcal{A}}\| Q_n(x)E_n(x) \| - \sup_{x\in\mathcal{A}}\| Q(x)E_n(x) \|\right| \nonumber\\
%
%
\leq & \sup_{x\in\mathcal{A}} \left\| Q_n(x) - Q(x) \right\|_F \; \sup_{x\in\mathcal{A}} \left\| E_n(x)\right\| 
=  O_p\left( \left(\gamma_{n,h}^{(2)} +h^2\right) \gamma_{n,h}^{(2)} \right),
\end{align}
and
%
%
%
%
\begin{align}\label{combeq2}
&\left| \sup_{x\in\mathcal{A}}\| Q(x)E_n(x) \| - \sup_{x\in\mathcal{A}} D_n(x) \right| \nonumber\\
\leq & \sup_{x\in\mathcal{A}} \|Q(x)\|_F \; \sup_{x\in\mathcal{A}}\| E_n(x) - M(x)^T( d^2\wh f(x) - d^2 f_h(x))\|
=  O_p\left(\gamma_{n,h}^{(1)} + (\gamma_{n,h}^{(2)})^2\right).
\end{align}
where we use Proposition~\ref{pointwisenormal}.
Combining (\ref{combeq1}) and (\ref{combeq2}), we then get (\ref{ridgenessest2}) by noticing that $\sup_{x\in\mathcal{A}}\| Q_n(x)E_n(x) \|= \sup_{x\in\mathcal{M}_h}B_n$ when $\mathcal{A}=\mathcal{M}_h$. \hfill$\square$

\end{proof}

\hspace{-12pt}{\bf Proof of Theorem~\ref{gassianapprox}}
\begin{proof}
The proof is similar to that of Proposition 3.1 in Chernozhukov et al. (2014) so we only give a sketch. Define $\mathcal{G}_h=\{\left\langle A(x,z), d^2K\left(\frac{x-\cdot}{h}\right) \right\rangle:\; x\in\mathcal{M}_h, z\in \mathbb{S}^{d-r-1}\}$. In other words, $\mathcal{G}_h$ is created by multiplying $h^{d/2}$ with the functions in $\mathcal{F}_h$ defined in (\ref{funcclass}). Under the assumption (\textbf{K1}), for $\beta\in\mathbb{Z}_+^d$ with $|\beta|=2$ and $0<\delta_1\leq\delta_0$, the class of functions $\{K^{(\beta)}\left(\frac{x-\cdot}{h}\right):\; x\in\mathbb{R}^d\}$ is VC type (see Vaat and Wellner, 1996). By Proposition~\ref{pointwisenormal}, for some $\delta_1>0$, $\sup_{x\in\mathcal{N}_{\delta_1}(\mathcal{M}),z\in\mathbb{S}^{d-r-1}} \|A(x,z)\|_F<\infty$. Hence $\mathcal{G}_h$ is VC type when $h$ is small enough, following from Lemma A.6 in Chernozhukov et al. (2014). It is clear that $\mathcal{G}_h$ is pointwise measurable and has a bounded envelope. Also following standard calculation one can show that $\sup_{g\in\mathcal{G}_h}\mathbb{E}|g(X_1)|^3 = O(h^d)$ and $\sup_{g\in\mathcal{G}_h}\mathbb{E}|g(X_1)|^4 = O(h^d)$.
Applying Corollary 2.2 in Chernozhukov et al. (2014) with parameters $\gamma=\gamma_n=(\log n)^{-1}$, $b=O(1)$, and $\sigma=\sigma_n=h^{d/2}$, we have 
\begin{align}\label{ChernozhukovApp0}
\left|   \sup_{g\in\mathcal{G}_h} \mathbb{G}_n(g)- \sup_{g\in\mathcal{G}_h}\mathbb{D}(g)\right|  = O_p \left( n^{-1/6}h^{d/3}\log n + n^{-1/4}h^{d/4}\log^{5/4}n + n^{-1/2}\log^{3/2}n\right),
\end{align}
where $\mathbb{D}$ is a centered Gaussian process on $\mathcal{G}_h$ such that $\mathbb{E}(\mathbb{D}(p)\mathbb{D}(\wt p)) {=} \text{Cov}(p(X_1),\; \wt p(X_1))$ for all $p, \wt p\in\mathcal{G}_h$. For $g\in\mathcal{F}_h$, note that $h^{d/2}g\in\mathcal{G}_h$. Let $\mathbb{B}(g)=h^{-d/2}\mathbb{D}(h^{d/2}g)$. Due to the rescaling relationship between $\mathcal{F}_h$ and $\mathcal{G}_h$, from (\ref{ChernozhukovApp0}) we get

%
%
%

\begin{align}\label{ChernozhukovApp}
&\left|   \sup_{g\in\mathcal{F}_h} \mathbb{G}_n(g)- \sup_{g\in\mathcal{F}_h}\mathbb{B}(g)\right| & = O_p \left( \frac{\log n}{(nh^{d})^{1/6}} + \frac{\log^{5/4}n}{(nh^{d})^{1/4}} + \frac{\log^{3/2}n}{(nh^{d})^{1/2}}\right)
& = o_p(\log^{-1/2}n),
\end{align}
due to the assumption $\gamma_{n,h}^{(0)}\log^{4}n=o(1)$.  
%
%
%
Since $ \mathbb{E} \left[ \sup_{g\in\mathcal{F}_h}|\mathbb{B}(g)| \right] = O(\sqrt{\log{n}})$ (by Dudley's inequality for Gaussian processes, c.f. van der Vaart and Wellner, 1996, Corollary 2.2.8), by applying Lemma 2.4 in Chernozhukov et al. (2014), (\ref{ChernozhukovApp}) leads to $$\sup_{t>0} \left|\mathbb{P}\left( \sup_{g\in\mathcal{F}_h} \mathbb{G}_n(g)<t\right) - \mathbb{P}\left(\sup_{g\in\mathcal{F}_h}\mathbb{B}(g)<t\right)\right| = o(1).$$
Then using (\ref{dnxequiv}) we obtain (\ref{gaussapp}). \hfill$\square$

\end{proof}

\hspace{-12pt}{\bf Proof of Proposition~\ref{covstructure}}
\begin{proof}
We only give the proof for $d-r\geq2$ below and for $d-r=1$ the arguments are similar. Let $\wt x= x+\Delta x$, $\check x= x+\frac{\Delta x}{2}$, $\wt z=z+\Delta z$, and $\check z=z+\frac{\Delta z}{2}$. Let 
\begin{align*}
A_h^-(u; \check x,\Delta x,\check z,\Delta z)= \left\langle A\left(\check x-\frac{\Delta x}{2},\check z-\frac{\Delta z}{2}\right), d^2K\left(u - \frac{\Delta x}{2h}\right) \right\rangle,\\ 
A_h^+(u; \check x,\Delta x,\check z,\Delta z) = \left\langle A\left(\check x+\frac{\Delta x}{2},\check z+\frac{\Delta z}{2}\right), d^2K\left(u + \frac{\Delta x}{2h}\right) \right\rangle .
\end{align*}
Let $\xi(u) := \xi(u,\check x,\check z) =\left\langle A\left(\check x,\check z\right), d^2K\left(u \right)\right\rangle,$ and denote $\xi_{\check x}= \frac{\partial \xi}{\partial \check x} $, $\xi_{\check z}=\frac{\partial \xi}{\partial \check z}$, $\xi_{u}=\frac{\partial \xi}{\partial u}$, $\xi_{\check z\check z}=\frac{\partial^2 \xi}{\partial \check z \otimes \partial \check z}$, $\xi_{uu}=\frac{\partial^2 \xi}{\partial u \otimes \partial u}$ and $\xi_{\check z u}=\frac{\partial^2 \xi}{\partial \check z \otimes \partial u}$. Also let  
\begin{align*}
&\xi^{(1)}(u) := \xi^{(1)}(u,\check x,\Delta x,\check z,\Delta z)= \left\langle \frac{\Delta x}{2}, \xi_{\check x} \right\rangle + \left\langle \frac{\Delta z}{2}, \xi_{\check z} \right\rangle + \left\langle \frac{\Delta x}{2h}, \xi_{u} \right\rangle,\\
&\xi^{(2)}(u) := \xi^{(2)}(u,\check x,\Delta x,\check z,\Delta z)=\frac{1}{2} \|\Delta z\|_{\xi_{\check z\check z}}^2 +  \frac{1}{2} \left\|\frac{\Delta x}{h}\right\|_{\xi_{uu}}^2  + \left\langle \frac{\Delta z}{2}, \frac{\Delta x}{2h} \right\rangle_{\xi_{\check z u}}.
\end{align*}
Taking $h\rightarrow 0$, $\Delta x/h\rightarrow0$ and $\Delta z\rightarrow 0$, and using Taylor expansion we have
\begin{align}
& A_h^-(u; \check x,\Delta x,\check z,\Delta z)= \xi(u) - \xi^{(1)}(u) + \xi^{(2)}(u) + o\left(\left\| \Delta x/h\right\|^2 + \|\Delta z\|^2 \right),\label{Ahminus}\\
& A_h^+(u; \check x,\Delta x,\check z,\Delta z) =  \xi(u) + \xi^{(1)}(u) + \xi^{(2)}(u) + o\left(\left\| \Delta x/h\right\|^2 + \|\Delta z\|^2 \right).\label{Ahplus}
%
%
\end{align}
Here and in the Taylor expansions throughout this proof, the o-terms are uniform in $x,\wt x\in\mathcal{M}_h,$ $z,\wt z\in \mathbb{S}^{d-r-1}$, and $h\in(0,h_0]$ for some $h_0>0$, due to assumptions (\textbf{F1}) - (\textbf{F5}) and (\textbf{K1}).
%
%
%
%
Note that for all $g_{x,z}, g_{\tilde x,\tilde z}\in\mathcal{F}_h$, by (\ref{Ahminus}) and (\ref{Ahplus}) and using change of variables $u=(\check x -s)/h$ we have the following calculation.
\begin{align}\label{calcser1}
&\mathbb{E}[g_{x,z}(X_1)g_{\tilde x, \tilde z}(X_1)] \nonumber\\
=& \frac{1}{h^{d} } \int_{\mathbb{R}^d} \left\langle A(x,z), d^2K\left(\frac{x-s}{h}\right) \right\rangle \left\langle A(\wt x,\wt z), d^2K\left(\frac{\tilde x-s}{h}\right) \right\rangle f(s)ds \nonumber\\
=& \int_{\mathbb{R}^d} A_h^-(u; \check x,\Delta x,\check z,\Delta z) A_h^+(u; \check x,\Delta x,\check z,\Delta z) f(\check x-hu)du \nonumber\\
=& \int_{\mathbb{R}^d} [\xi(u)^2 - \xi^{(1)}(u)^2+2\xi(u)\xi^{(2)}(u)] f(\check x-hu)du + o\left(\left\| \Delta x/h\right\|^2 + \|\Delta z\|^2 \right), 
%
%
%
%
\end{align}
and
\begin{align}\label{calcser2}
&\mathbb{E}[g_{x,z}(X_1)^2]\nonumber \\
=& \frac{1}{h^{d} } \int_{\mathbb{R}^d} \left\langle A(x,z), d^2K\left(\frac{x-s}{h}\right) \right\rangle^2 f(s)ds \nonumber\\
=& \int_{\mathbb{R}^d} A_h^-(u;\check x,\Delta x,\check z,\Delta z)^2 f(\check x-hu)du \nonumber\\
=& \int_{\mathbb{R}^d} [\xi(u)^2 - 2\xi(u)\xi^{(1)}(u) + \xi^{(1)}(u)^2 + 2\xi(u)\xi^{(2)}(u)] f(\check x-hu)du + o\left(\left\| \Delta x/h\right\|^2 + \|\Delta z\|^2 \right),
%
\end{align}
and
\begin{align}\label{calcser3}
&\mathbb{E}[g_{\tilde x,\tilde z}(X_1)^2] \nonumber\\
=& \frac{1}{h^{d} } \int_{\mathbb{R}^d} \left\langle A(\wt x,\wt z), d^2K\left(\frac{\wt x-s}{h}\right) \right\rangle^2 f(s)ds \nonumber\\
=& \int_{\mathbb{R}^d} A_h^+(u; \check x,\Delta x,\check z,\Delta z)^2 f(\check x-hu)du \nonumber\\
=& \int_{\mathbb{R}^d} [\xi(u)^2 + 2\xi(u)\xi^{(1)}(u) + \xi^{(1)}(u)^2 + 2\xi(u)\xi^{(2)}(u)] f(\check x-hu)du + o\left(\left\| \Delta x/h\right\|^2 + \|\Delta z\|^2 \right).
%
%
%
\end{align}
Using (\ref{Ahminus}) and (\ref{Ahplus}) again, and noticing that $\mathbb{E}(g_{x,z}(X_1)) = \sqrt{h^{d}}  \int A_h^-(u; \check x,\Delta x,\check z,\Delta z)  f(\check x-hu)du$ and $\mathbb{E}(g_{\tilde x,\tilde z}(X_1)) = \sqrt{h^{d}}  \int A_h^+(u; \check x,\Delta x,\check z,\Delta z)  f(\check x-hu)du$, we have
\begin{align}
&\mathbb{E}(g_{x,z}(X_1))  \mathbb{E}(g_{\tilde x,\tilde z}(X_1)) 
= h^d\left[\int_{\mathbb{R}^d} \xi(u) f(\check x - hu)du\right]^2 
%
%
 + o\left(\left\| \Delta x/h\right\|^2 + \|\Delta z\|^2 \right), \label{calcser4}\\
 & \left[ \mathbb{E}(g_{x,z}(X_1)) \right]^2 = h^d\left[\int_{\mathbb{R}^d} \xi(u) f(\check x - hu)du\right]^2 -h^d\left[\int_{\mathbb{R}^d} \xi(u) f(\check x - hu)du\right] \nonumber\\
 &\hspace{3cm}\times \left[\int \xi^{(1)}(u) f(\check x - hu)du\right]+ o\left(\left\| \Delta x/h\right\|^2 + \|\Delta z\|^2 \right), \label{calcser5}\\
  & \left[ \mathbb{E}(g_{\tilde x,\tilde z}(X_1)) \right]^2 = h^d\left[\int_{\mathbb{R}^d} \xi(u) f(\check x - hu)du\right]^2  + h^d\left[\int_{\mathbb{R}^d} \xi(u) f(\check x - hu)du\right] \nonumber\\
  &\hspace{3cm}\times \left[\int_{\mathbb{R}^d} \xi^{(1)}(u) f(\check x - hu)du\right]+ o\left(\left\| \Delta x/h\right\|^2 + \|\Delta z\|^2 \right). \label{calcser6}
\end{align}
Let 
\begin{align*}
&M_0 = \int_{\mathbb{R}^d} \xi(u)^2 f(\check x - hu)du - h^d\left[\int_{\mathbb{R}^d} \xi(u) f(\check x - hu)du\right]^2,\\
%
&M_1 =  \int_{\mathbb{R}^d} \xi(u) \xi^{(1)}(u)f(\check x - hu)du -  h^d\left[\int_{\mathbb{R}^d} \xi(u) f(\check x - hu)du\right] \left[\int_{\mathbb{R}^d} \xi^{(1)}(u) f(\check x - hu)du\right],\\
%
%
&M_2 =  \int_{\mathbb{R}^d}  \xi^{(1)}(u)^2 f(\check x - hu)du,\\
%
%
&M_3 = 2\int_{\mathbb{R}^d} \xi(u) \xi^{(2)}(u)f(\check x - hu)du .
%
\end{align*}
It follows from the calculations in (\ref{calcser1}) - (\ref{calcser6}) that
\begin{align*}
& \text{Var}(g_{x,z}(X_1)) \text{Var}(g_{\tilde x,\tilde z}(X_1))\\
=& (M_0-2M_1+M_2+M_3)(M_0+2M_1+M_2+M_3) + o\left(\left\| \Delta x/h\right\|^2 + \|\Delta z\|^2 \right)\\
=& (M_0^2 - 4M_1^2 + 2M_0M_2 + 2M_0M_3) + o\left(\left\| \Delta x/h\right\|^2 + \|\Delta z\|^2 \right).
\end{align*}
%
%
%
%
Similarly,
\begin{align*}
\text{Cov}(g_{x,z}(X_1) ,\;g_{\tilde x,\tilde z}(X_1)) 
= (M_0 - M_2 + M_3 ) + o\left(\left\| \Delta x/h\right\|^2 + \|\Delta z\|^2 \right).
\end{align*}
So using Taylor expansion we get
\begin{align}\label{rhytildey}
r_h(x,\tilde x,z,\tilde z) =& \frac{\text{Cov}(g_{x,z}(X_1) ,\;g_{\tilde x,\tilde z}(X_1))}{\sqrt{\text{Var}(g_{x,z}(X_1)) \text{Var}(g_{\tilde x,\tilde z}(X_1))}} \nonumber\\
= & 1 +  2\frac{M_1^2}{M_0^2} - 2\frac{M_2}{M_0}  + o\left(\left\| \Delta x/h\right\|^2 + \|\Delta z\|^2 \right).
\end{align}
Below we will find the leading terms in the expansion of $M_0$, $M_1$ and $M_2$. For $M_0$, we have 
\begin{align}\label{M0approx}
M_0 = & \int_{\mathbb{R}^d} \xi(u)^2 f(\check x-hu)du\{1+o(1)\}\nonumber\\
= & \int_{\mathbb{R}^d} \left\langle A\left(\check x,\check z\right), d^2K\left(u\right) \right\rangle^2f(\check x-hu)du\{1+o(1)\}\nonumber\\
=& \int_{\mathbb{R}^d} \left\langle A\left(\check x,\check z\right), d^2K\left(u\right) \right\rangle^2f(\check x)du \{1+o(1)\}\nonumber\\
= & 1+o(1).
\end{align}
Next we consider $M_1$. Since the kernel $K$ is symmetric, we have
\begin{align*}
f(\check x) \int_{\mathbb{R}^d} \xi(u) \left\langle \frac{\Delta x}{2h}, \xi_{u} (u)\right\rangle  du = 0.
\end{align*}
%
Let $J(x,u)=[f(x)\Sigma(x)]^{-1/2}M(x)^Td^2K(u)$. Then $\xi(u)=\langle\check z,J(\check x,u)\rangle$ and $\xi_{\check z}(u)=J(\check x,u)$. 
%
Notice that $\int_{\mathbb{R}^d} J(\check x,u)J(\check x,u)^T du = [f(\check x)]^{-1}\mathbf{I}_{d(d+1)/2}$. Therefore
\begin{align}\label{intinner1}
f(\check x) \int_{\mathbb{R}^d} \xi(u)  \left\langle \frac{\Delta z}{2}, \xi_{\check z}(u) \right\rangle du = \Delta z^T\check z = \Delta z^T \left(z+\frac{\Delta z}{2}\right).
\end{align}
Since we only consider $z,z+\Delta z\in\mathbb{S}^{d-r-1}$, we have $\|z\|^2 = \|z+\Delta z\|^2$, that is, $\|z\|^2 = \|z\|^2 + 2 \Delta z^T z + \|\Delta z\|^2$, which implies that 
$\Delta z^T z = -\frac{1}{2} \|\Delta z\|^2$ and furthermore 
\begin{align}\label{intinner2}
f(\check x) \int_{\mathbb{R}^d} \xi(u)  \left\langle \frac{\Delta z}{2}, \xi_{\check z}(u) \right\rangle du = 0.
\end{align}
Hence using (\ref{intinner1}) and (\ref{intinner2}) and a Taylor expansion, we have
\begin{align}\label{M1approx}
M_1^2 
%
%
=o\left(\left\| \Delta x/h\right\|^2 + \|\Delta z\|^2 \right).
\end{align}
Also note that
\begin{align}\label{M2approx}
M_2 = &\int_{\mathbb{R}^d} \left[ \left\langle \frac{\Delta z}{2}, \xi_{\check z}(u) \right\rangle^2 + \left\langle \frac{\Delta x}{2h}, \xi_{u} (u)\right\rangle^2\right] f(\check x - hu)du + o\left(\left\| \Delta x/h\right\|^2 + \|\Delta z\|^2\right)\nonumber\\
=& \frac{1}{4} \|\Delta z\|^2 + \frac{1}{4}  \Delta x^T\Omega(x,z)\Delta x  + o\left(\left\| \Delta x/h\right\|^2 + \|\Delta z\|^2\right),
%
\end{align}
%
%
where $\Omega(x,z)$ is given in (\ref{omegaxz}).
%
%
%
%
%
%
Therefore using (\ref{rhytildey}), (\ref{M0approx}), (\ref{M1approx}) and (\ref{M2approx}), we get (\ref{covariancestructure}). \hfill $\square$
\end{proof}

\hspace{-12pt}{\bf Proof of Lemma~\ref{biasasymptotic}}
\begin{proof}
It follows from a similar derivation for (\ref{eigenvecdiff}) that $[V_h(x) -V(x)]^T = J_h(x)^T + R_{h1}(x)^T$, where $R_{h1}(x)^T=O(h^4)$ and
\begin{align*}
J_h(x)^T = \begin{pmatrix}
   \Big(d^2 f_h(x) - d^2 f(x) \Big)^T \nabla  G_{r+1}(d^2  f(x))^T\\
    \vdots\\
    \Big(d^2 f_h(x) - d^2 f(x)\Big)^T\nabla  G_d(d^2 f(x))^T
  \end{pmatrix} = O(h^2).
\end{align*}
Using integration by part, Taylor expansion and the symmetry of $K$, we have
\begin{align*}
d^2 f_h(x) -  d^2 f(x) = \int_{\mathbb{R}^d} K(u) d^2f(x+hu)du - d^2 f(x) = \frac{1}{2}h^2\mu_K \Delta_Ld^2 f(x) + R_{h2}(x),
\end{align*}
where $R_{h2}(x)=o(h^2)$, because of the assumed fourth-order continuous differentiability of $f$ on $\mathcal{H}$ and the compactness of $\mathcal{H}$. Similarly we get $ \nabla f_h(x) - \nabla f(x) = \frac{1}{2}h^2\mu_K \Delta_L \nabla f(x) + R_{h3}(x)$, where $R_{h3}(x)=o(h^2)$. 
%
Therefore, similar to (\ref{ridgenessdiff}), we have
\begin{align}\label{biasrem}
&V_h(x)^T\nabla f_h(x) - V(x)^T\nabla f(x) \nonumber\\
=& [V_h(x)-V(x)]^T\nabla f_h(x) +  V(x)^T[ \nabla f_h(x) - \nabla f(x) ] \nonumber\\
=& M(x)^T\Big( d^2 f_h(x) -  d^2 f(x) \Big) + R_{h1}^T\nabla f_h(x) +  [J_h(x) + V(x)]^T[ \nabla f_h(x) - \nabla f(x) ] \nonumber\\
=& \frac{1}{2}h^2\mu_K \beta(x) + R_h(x) ,
\end{align}
where $R_h(x) = o(h^2)$, uniformly in $x\in\mathcal{N}_{\delta_0}(\mathcal{M})$. When $f$ is six times continuously differentiable, by using higher order Taylor expansions, we have $R_{h2}(x)=O(h^4)$ and $R_{h3}(x)=O(h^4)$, and therefore $R_{h}(x)=O(h^4)$, uniformly in $x\in\mathcal{N}_{\delta_0}(\mathcal{M})$.  \hfill$\square$
\end{proof}

\hspace{-12pt}{\bf Proof of Theorem~\ref{underconfidenceregion}}

\begin{proof}
(i) {\em Undersmoothing}: First recall that $B_n(x)=\| Q_n(x)[\wh V(x)^T\nabla\wh f(x)]\|$. Since $V(x)^T\nabla f(x)=0$ for $x\in\mathcal{M}$, we write $\sup_{x\in\mathcal{M}}B_n(x)=\sup_{x\in\mathcal{M}}\| Q_n(x)[\wh V(x)^T\nabla\wh f(x)- V(x)^T\nabla f(x) ]\|$. We denote $\wt B_n(x)= \| Q_n(x)[\wh V(x)^T\nabla\wh f(x) - V_h(x)^T\nabla f_h(x)]\|$. It is known that $\sup_{x\in\mathcal{M}} \| Q_n(x)\|_F  = O_p(1)$ by (\ref{ridgenessest1}). Also we have that $\sup_{x\in\mathcal{M}} \|V(x)^T\nabla f(x) - V_h(x)^T\nabla f_h(x)\|=O(h^2)$ by using Lemma~\ref{biasasymptotic}. Hence
\begin{align}\label{negldiff}
\left|\sup_{x\in\mathcal{M}}B_n(x) - \sup_{x\in\mathcal{M}}\wt B_n(x)\right| & \leq \sup_{x\in\mathcal{M}} \| Q_n(x)\|_F \sup_{x\in\mathcal{M}} \|V(x)^T\nabla f(x) - V_h(x)^T\nabla f_h(x)\| \nonumber\\
& = O_p(h^2).
\end{align}

%
%
%
%
%
%
%
%
It follows from the same proof for Theorem~\ref{confidenceregion} that
\begin{align*}
\mathbb{P}\left(\sqrt{nh^{d+4}}\sup_{x\in\mathcal{M}} \wt B_n(x)\leq b_h(z_\alpha,c^{(d,r)})\right) \rightarrow 1-\alpha.
\end{align*}
Under the assumption $\gamma_{n,h}^{(4)}\rightarrow\infty$, then (\ref{negldiff}) leads to (\ref{asympconfbias}).


(ii) {\em Explicit bias correction}: Let $B_n^{\text{bc}}(x)=\| Q_n(x)[\wh V(x)^T\nabla\wh f(x) -\frac{1}{2}h^2\mu_K\wh \beta_{n,l}(x)]\|$. Then
\begin{align}\label{negldiff2}
& \left|\sup_{x\in\mathcal{M}}B_n^{\text{bc}}(x) - \sup_{x\in\mathcal{M}}\wt B_n(x)\right| \nonumber\\
 \leq & \sup_{x\in\mathcal{M}} \| Q_n(x)\|_F \sup_{x\in\mathcal{M}} \|V(x)^T\nabla f(x) - V_h(x)^T\nabla f_h(x) -\frac{1}{2}h^2\mu_K\wh \beta_{n,l}(x)\| \nonumber\\
\leq & \sup_{x\in\mathcal{M}} \| Q_n(x)\|_F \left[\frac{1}{2}h^2\mu_K \sup_{x\in\mathcal{M}} \|\wh \beta_{n,l}(x) - \beta(x)\| + \sup_{x\in\mathcal{M}} \|R_h(x)\| \right],
\end{align}
where $R_h(x)$ is given in (\ref{biasrem}) and $\sup_{x\in\mathcal{M}} \|R_h(x)\|=O(h^4).$
It follows from Lemma~\ref{lambda2} that $\sup_{x\in\mathcal{M}}\|\wh \beta_{n,l}(x) - \beta(x)\|=O_p(\gamma_{n,l}^{(4)}) + O(h^2)$, assuming that $f$ is six times continuously differentiable. Then Lemma~\ref{biasasymptotic} leads to that the right-hand side of (\ref{negldiff2}) is of order $ O_p(h^2\gamma_{n,l}^{(4)} +h^2l^2 +h^4)$. 
%
Then following the same arguments in (i), we obtain (\ref{asympconfbiascorr}) using the assumptions $h/l\rightarrow0$ and $\gamma_{n,h}^{(4)}/l^2\rightarrow\infty$.
\hfill$\square$

\end{proof}

\hspace{-12pt}{\bf Proof of Corollary~\ref{pluginregion}}
\begin{proof}
The idea in this proof is similar to that given in Qiao (2019a), in particular the proof of Theorem 3.1 therein and so we only give a sketch of the proof. Using Theorems~\ref{confidenceregion} and \ref{underconfidenceregion}, it suffices to prove $\wh c_{n,l}^{(d,r)} - c_h^{(d,r)} =o_p(1)$ and $\wh c_{n,l}^{(d,r)} - c^{(d,r)} =o_p(1)$. Due to their similarity, we only prove the former, which is equivalent to
\begin{align}\label{surfaceintconsist}
\int_{\wh{\mathcal{M}}_{n,l}} \|\wh\Omega_{n,l}(x,z)^{1/2} \Lambda(T_x\wh{\mathcal{M}}_{n,l})\|_r d\mathscr{H}_r(x) - \int_{\mathcal{M}_h} \|\Omega(x,z)^{1/2} \Lambda(T_x\mathcal{M}_h)\|_r d\mathscr{H}_r(x) =o_p(1).
\end{align}
%
 
First note that with probability one $\wh{\mathcal{M}}_{n,l} \subset \mathcal{N}_{\delta_0}(\mathcal{M})$ for $n$ large enough by using Lemma~\ref{lambda2} and a similar argument as in the proof of Lemma~\ref{lambda2bound}. Also with probability one $ \wh{\mathcal{M}}_{n,l}$ is an $r$-dimensional manifold and has positive reach for $n$ large enough. This can be shown in a way similar to the proof of Lemma~\ref{lambda2bound}, by using Lemma~\ref{lambda2} and Theorem 4.12 in Federer (1959). 

Next we define a normal projection from $\mathcal{M}_h$ to $\wh{\mathcal{M}}_{n,l}$. Let $p_{i,h}(x)=\nabla f_h(x)^Tv_{r+i,h}(x)$, and $l_{i,h}(x)=\nabla p_{i,h}(x)$. Also let $N_h(x)=(N_{1,h}(x),\cdots,N_{d-r,h}(x))$, where $N_{i,h}(x)=l_{i,h}(x)/\|l_{i,h}(x)\|$, $i=1,\cdots,d-r$. Note that $N_{i,h}(x)$, $i=1,\cdots,d-r$ are unit vectors that spans the normal space of $\mathcal{M}_h$. Similarly we define $\wh N_{n,l}$ with its columns spanning the normal space of $\wh{\mathcal{M}}_{n,l}$. For $t=(t_{1},\cdots,t_{d-r})^T$, define $\zeta_x(t)=x+N_h(x)t$ and $t_n(x) = \argmin_t\{\|t\|: \zeta_x(t)\in\wh{\mathcal{M}}_{n,l}\}$. For $x\in\mathcal{M}_h$, let $P_n(x)=\zeta_x(t_n(x))$. Then following similar arguments in the proof of Theorem 1 in Chazal et al. (2007) one can show with probability one $P_n$ is a homeomorphism between $\mathcal{M}_h$ and $\wh{\mathcal{M}}_{n,l}$ when $n$ is large enough. Note that for $x\in\mathcal{M}_h$ we can establish the following system of equations: for $ i=1,\cdots,d-r,$ 
\begin{align}\label{systemequ}
0 = \wh p_i(P_n(x)) - p_{i,h}(x) =  \wh p_i(x) - p_{i,h}(x) + t_n(x)^TN_h(x)^T\nabla p_{i,h}(x) + O(\|t_n(x)\|^2).
\end{align}
Note that under our assumption, $N_h(x)$ is full rank and so $N_h(x)N_h(x)^T$ is positive definite. Then (\ref{systemequ}) yields $t_n(x) = [N_h(x)N_h(x)^T]^{-1} [\wh p(x) - p_h(x)] + O(\|t_n(x)\|^2)$ and therefore $\sup_{x\in\mathcal{M}_h}\|t_n(x) \| = o_p(1).$ Similarly by taking gradient on both sides of (\ref{systemequ}) we can obtain $\sup_{x\in\mathcal{M}_h}\|\nabla t_n(x) \|_F = o_p(1).$\\

Note that the Jacobian of $P_n$ is $J_n(x) = \mathbf{I}_d+\nabla N_h(x) t_n(x) + N_h(x) \nabla t_n(x)$. From the above derivation we have 
\begin{align}
&\sup_{x\in\mathcal{M}_h}\|P_n(x)- x\|=o_p(1),\label{pnconsis}\\
&\sup_{x\in\mathcal{M}_h}\|\wh N_{n,l}(P_n(x)) - N_h(x)\|_F=o_p(1),\label{npnconsist}\\
&\sup_{x\in\mathcal{M}_h}\|J_n(x)- \mathbf{I}_d\|_F=o_p(1).\label{jnconsist}
\end{align}

Also using Lemma~\ref{lambda2} and following similar arguments given in the proof of Proposition~\ref{ridgenessest} one can show that  
\begin{align}
&\sup_{x\in\mathcal{M}_h}\|\wh \Omega_{n,l}(x,z)- \Omega(x,z)\|_F=o_p(1).\label{omeganconsist}
\end{align}

Since $\mathcal{M}_h$ is a compact submanifold embedded in $\mathbb{R}^d$, it admits an atlas $\{(U_\alpha,\psi_\alpha):\alpha\in\mathscr{A}\}$ indexed by a finite set $\mathscr{A}$, where $\{U_\alpha:\alpha\in\mathscr{A}\}$ is an open cover of $\mathcal{M}$, and for an open set $\Omega_\alpha\subset \mathbb{R}^{r}$, $\psi_\alpha: \Omega_\alpha \mapsto U_\alpha$ is a diffeomorphism. We suppress the subscript $\alpha$ in what follows. For any $U$, we write $\wh U =\{P_n(x):x\in U\}$. Then
 \begin{align*}
&\int_{\wh U} \|\wh\Omega_{n,l}(x,z)^{1/2} \Lambda(T_x\wh{\mathcal{M}}_{n,l})\|_r d\mathscr{H}_r(x) - \int_{U} \|\Omega(x,z)^{1/2} \Lambda(T_x\mathcal{M}_h)\|_r d\mathscr{H}_r(x) \\
= & \text{I}_n + \text{II}_n + \text{III}_n, 
 \end{align*}
 where
\begin{align*}
&\text{I}_n = \int_{U} \|\Omega(P_n(x),z)^{1/2} \Lambda(T_x\mathcal{M}_h)\|_r d\mathscr{H}_r(x) - \int_{U} \|\Omega(x,z)^{1/2} \Lambda(T_x\mathcal{M}_h)\|_r d\mathscr{H}_r(x),\\
&\text{II}_n = \int_{U} \|\wh\Omega_{n,l}(P_n(x),z)^{1/2} \Lambda(T_{P_n(x)}\wh{\mathcal{M}}_{n,l})\|_r d\mathscr{H}_r(x) \\
& \hspace{3cm} - \int_{U} \|\Omega(P_n(x),z)^{1/2} \Lambda(T_x\mathcal{M}_h)\|_r d\mathscr{H}_r(x) ,\\
&\text{III}_n = \int_{\wh U} \|\wh\Omega_{n,l}(x,z)^{1/2} \Lambda(T_x\wh{\mathcal{M}}_{n,l})\|_r d\mathscr{H}_r(x) \\
& \hspace{3cm} - \int_{U} \|\wh\Omega_{n,l}(P_n(x),z)^{1/2} \Lambda(T_{P_n(x)}\wh{\mathcal{M}}_{n,l})\|_r d\mathscr{H}_r(x) .
\end{align*} 
Then (\ref{surfaceintconsist}) follows from $\text{I}_n + \text{II}_n + \text{III}_n=o_p(1)$, where $\text{I}_n=o_p(1)$ is due to (\ref{pnconsis}) and that $\Omega(x,z)$ as a function of $x$ is continuous on $\mathcal{N}_{\delta_1}(\mathcal{M})$ for some $0<\delta_1\leq\delta_0$, $\text{II}_n=o_p(1)$ is due to (\ref{npnconsist}) and (\ref{omeganconsist}), and $\text{III}_n=o_p(1)$ is due to (\ref{jnconsist}). We then conclude the proof. \hfill$\square$
\end{proof}

\hspace{-12pt}{\bf Proof of Proposition~\ref{lambdaprop}}
\begin{proof}
We only show the proof of (\ref{lambdah}) and (\ref{lambdanh}) can be proved similarly. Note that $\sup_{x\in\mathcal{M}_h} \wh\lambda_{h,r+1}(x)  - \sup_{x\in\wh{\mathcal{M}}} \wh{\lambda}_{r+1}(x) = \text{I}_n + \text{II}_n + \text{III}_n,$
where $\text{I}_n = \sup_{x\in\mathcal{M}_h} \lambda_{h,r+1}(x)  -  \sup_{x\in\wh{\mathcal{M}}} \lambda_{h,r+1}(x) $, $\text{II}_n = \sup_{x\in\wh{\mathcal{M}}} \lambda_{h,r+1}(x) - \sup_{x\in\wh{\mathcal{M}}} \wh{\lambda}_{r+1}(x) $, $\text{III}_n = \sup_{x\in\mathcal{M}_h} \wh{\lambda}_{r+1}(x) - \sup_{x\in\mathcal{M}_h} \lambda_{h,r+1}(x) $. It suffices to prove 
\begin{align}
\text{I}_n = O_p\left(\gamma_{n,h}^{(2)}\right),\label{In}\\
\text{II}_n + \text{III}_n = O_p\left(\gamma_{n,h}^{(2)}\right).\label{IIn}
%
\end{align}
Note that 
\begin{align*}
\max(\left| \text{II}_n \right|,\; \left| \text{III}_n \right|) \leq \sup_{x\in\wh{\mathcal{M}}\cup \mathcal{M}_h} \left| \wh{\lambda}_{r+1}(x) - \lambda_{h,r+1}(x)  \right| \leq \sup_{x\in\mathcal{H}} \left| \wh{\lambda}_{r+1}(x) - \lambda_{h,r+1}(x)\right|,
\end{align*} 
and hence (\ref{IIn}) follows from Lemma~\ref{lambda2}. Next we show (\ref{In}). For $\delta>0$, let
\begin{align*}
\mathcal{N}_{\delta}(\mathcal{M}_h) = \{x\in\mathcal{H}: \|V_h(x)^T \nabla f_h(x)\|\leq \delta,\; \lambda_{h,r+1}(x)<0\}.
\end{align*}
Using Lemma~\ref{lambda2} and following the proof of Lemma~\ref{lambda2bound}, we can find a constant $C_1>0$ such that $\mathbb{P}(\wh{\mathcal{M}}\subset \mathcal{N}_{d_{n,1}}(\mathcal{M}_h)) \rightarrow 1$ with $d_{n,1}=C_1\gamma_{n,h}^{(2)}$. Given any $u\in\mathbb{R}^{d-r}$, let $\mathcal{M}_h^u = \{x\in\mathcal{H}:\; V_h(x)^T\nabla f_h(x)=u, \lambda_{h,r+1}(x)<0\}.$ Using similar arguments given in the proof of Corollary~\ref{pluginregion} and due to (\ref{marginassump}), bijective normal projections can be established between $\mathcal{M}_h=\mathcal{M}_h^0$ and $\mathcal{M}_h^u$ when $\|u\|$ is small enough. Hence there exists a constant $C_2>0$ such that when both $\|u\|$ and $h$ are small enough we have $\sup_{x \in \mathcal{M}_h^u} d(x, \mathcal{M}_h) \leq C_2 \|u\|.$ This then implies 
\begin{align}\label{hausdorff1}
\mathbb{P}(\wh{\mathcal{M}}\subset \mathcal{M}_h\oplus (C_2d_{n,1}) ) \rightarrow 1.
\end{align}
%
%
Next we will show that for some $d_{n,2} = O(\gamma_{n,h}^{(2)})$, 
\begin{align}\label{hausdorff2}
\mathbb{P}(\mathcal{M}_h \subset \wh{\mathcal{M}}\oplus d_{n,2} ) \rightarrow 1.
\end{align}
For $d-r=1$, (\ref{hausdorff2}) directly follows from Theorem 2 of Cuevas et al. (2006). Next we show (\ref{hausdorff2}) for $d-r\geq 2$.  Let $\wh l_i(x) = \nabla(\nabla \wh f(x)^T \wh v_{r+i}(x))$, $i=1,\cdots,d-r$ and $\wh L(x)=(\wh l_{1}(x),\cdots,\wh l_{d-r}(x))$. Using Lemma~\ref{lambda2} and the assumption $\gamma_{n,h}^{(3)}\rightarrow0$, then similar to (\ref{marginassump}), we have
\begin{align}
&\inf_{x\in\mathcal{N}_{\delta_0}(\mathcal{M})} \text{det}(\wh L(x)^T\wh L(x)) \nonumber \\
\geq & \inf_{x\in\mathcal{N}_{\delta_0}(\mathcal{M})} \text{det}(L(x)^TL(x)) - \sup_{x\in\mathcal{N}_{\delta_0}(\mathcal{M})} | \text{det}(L(x)^TL(x))- \text{det}(\wh L(x)^T\wh L(x))| \nonumber\\
\geq & \epsilon_0 - O_{a.s.}\left( \gamma_{n,h}^{(3)} +h^2 \right),\label{marginassump2}
\end{align}
where $\epsilon_0>0$ is given in (\ref{epsilon0const}). This then allows us to switch the roles between $\wh{\mathcal{M}}$ and $\mathcal{M}_h$ in proving (\ref{hausdorff1}) and we get (\ref{hausdorff2}). Now with (\ref{hausdorff1}) and (\ref{hausdorff2}), and using the Lipschitz continuity of the $(r+1)$th eigenvalue as a function of symmetric matrices (Weyl's inequality, cf. page 57, Serre, 2002), we obtain (\ref{In}) and then conclude the proof.
\hfill$\square$ 

\end{proof}

\hspace{-12pt}{\bf Proof of Theorem~\ref{generalization} }
\begin{proof}
First we show that
\begin{align}
\mathbb{P}\left( \mathcal{M}_h \subset [\mathcal{J}_{n,\eta} \cup \mathcal{G}^0_{n,\eta} ] \right) \rightarrow 1-\alpha. \label{confregzero1}
\end{align}
where $\mathcal{J}_{n,\eta} = \wh C_{n,h}(b_{h}(z_\alpha, c_{h}^{(d,r)}( \mathcal{C}_{n,\eta}^\complement)),\;\zeta_n^0)$. Denote events $E_{n,1} = \{(\mathcal{M}_h \cap \mathcal{K}_{h,\eta}^\complement) \subset \mathcal{J}_{n,\eta}\}$ and $E_{n,2} = \{(\mathcal{M}_h \cap \mathcal{K}_{h,\eta}) \subset \mathcal{G}^0_{n,\eta}\}.$ The following are two basic inequalities that will be used in the proof. 
\begin{align}
&\|\nabla f_h(x)\| + \|\nabla \wh f(x) - \nabla f_h(x) \|  \geq \|\nabla \wh f(x) \| \geq \|\nabla f_h(x)\| -\|\nabla \wh f(x) - \nabla f_h(x) \|,\label{basicrelation}\\
&\|\nabla f_h(x)\| + \|\nabla f(x) - \nabla f_h(x) \|  \geq \|\nabla f(x) \| \geq \|\nabla f_h(x)\| -\|\nabla f(x) - \nabla f_h(x) \| .\label{basicrelation2}
\end{align}
Due to (\ref{basicrelation2}) and Lemma~\ref{lambda2}, there exists a constant $C_1>0$ such that for $\mathcal{K}_{h,\eta}^{\dagger}=\{x\in\mathcal{H}:\; \|\nabla f(x)\| \leq C_1 h^\eta\}$, we have $\mathcal{K}_{h,\eta}^\complement \subset \mathcal{K}_{h,\eta}^{\dagger\complement}$, when $h$ is small enough. For $x\in  \mathcal{K}_{h,\eta}^\complement$, let $\wh N(x)=\|\nabla f(x)\|^{-1}\nabla \wh f(x)$, $N_h(x)=\|\nabla f(x)\|^{-1}\nabla f_h(x)$, and $N(x)=\|\nabla f(x)\|^{-1}\nabla f(x)$. Similar to Lemma~\ref{pointwisenormal}, we can show that 
\begin{align}
&\sup_{x\in  \mathcal{K}_{h,\eta}^\complement} \left\| \wh N(x) - N_h(x) \right\|= O_p\left(\gamma_{n,h}^{(1+\eta)} \right), \label{normgrad1}\\
&\sup_{x\in  \mathcal{K}_{h,\eta}^\complement} \left\| N(x)  - N_h(x) \right\|= O\left(h^{2-\eta}\right).\label{normgrad2}
\end{align}
Denote $M^*(x) = \|\nabla f(x)\|^{-1}M(x)$, $\Sigma^*(x)=M^*(x)^T\mathbf{R}M^*(x)$, $Q^*(x) = [f(x)\Sigma^*(x)]^{-1/2}= \|\nabla f(x)\|[f(x)\Sigma(x)]^{-1/2}$ and $Q_n^*(x) = \|\nabla f(x)\| [\wh f(x) \wh \Sigma(x)]^{-1/2}$. Then for $B_n(x)$ and $D_n(x)$ in (\ref{Bnx}) and (\ref{Dnx}), we have the following equivalent expressions. 
\begin{align}\label{Dnx2}
B_n(x)= \left\| Q_n^*(x)\wh V(x)^T \wh N(x) \right\| \text{ and } D_n(x) = \left\| Q^*(x) M^*(x)^T\left( d^2\wh f(x) - d^2 f_h(x)\right)\right\|.
\end{align}
Denote $E_n^*(x)=\wh V(x)^T \wh N(x) - V_h(x)^T N_h(x)$. Using (\ref{normgrad1}) and (\ref{normgrad2}), the following result can be obtained similar to (\ref{ridgenessdiff}).
\begin{align}\label{ridgenessdiff2}
\sup_{x\in\mathcal{M}_h \cap \mathcal{K}_{h,\eta}^\complement} \| E_n^*(x) - M^*(x)^T\Big(d^2 \wh f(x) - d^2 f_h(x)\Big)\|  = O_p\left( \gamma_{n,h}^{(1+\eta)} + (\gamma_{n,h}^{(2)})^2\right).
\end{align}
Denote $\mathcal{C}=\{x\in\mathcal{H}:\; \nabla f(x)=0\}$, which is the set of critical points of $f$. Note that for any point $x\in\mathcal{M}\cap\mathcal{C}^\complement$, $N(x)$ is a unit vector in the linear subspace spanned by $v_{1}(x),\cdots,v_r(x)$ by (\ref{condition1}). In other words, $\sum_{j=1}^r [v_j(x)^TN(x)]^2=1$. Hence similar to (\ref{minegienvalue}) we get
\begin{align*}
\lambda_{\min}(\Sigma^*(x)) & \geq  \frac{\lambda_{\min}(\mathbf{R})}{2\lambda_{\max}(D^+(D^+)^T)}  \min_{i\in\{r+1,\cdots,d\}}\sum_{j=1}^r \left[\frac{v_j(x)^TN(x)}{\lambda_i(x) - \lambda_j(x)}\right]^2 \\
&\geq \frac{\lambda_{\min}(\mathbf{R})}{2\lambda_{\max}(D^+(D^+)^T)} \min_{j
\in\{1,\cdots,r\}} \min_{i\in\{r+1,\cdots,d\}} \left[\frac{1}{\lambda_i(x) - \lambda_j(x)}\right]^2.
\end{align*}
Due to assumption (\textbf{F4}), we have $\inf_{x\in\mathcal{M}\cap\mathcal{C}^\complement}\lambda_{\min}(\Sigma^*(x))\geq C_2$ for some positive constant $C_2$. Therefore $\inf_{x\in \mathcal{M}_h \cap \mathcal{K}_{h,\eta}^\complement} \lambda_{\min}(\Sigma^*(x)) >C_2/2$ when $h$ is small enough, because of the Lipschitz continuity of $\lambda_{\min}$. 
%
Then using (\ref{ridgenessdiff2}) and following the similar arguments as in the proof of Proposition~\ref{ridgenessest}, we get 
\begin{align}
\sup_{x\in \mathcal{M}_h \cap \mathcal{K}_{h,\eta}^\complement } B_n(x) - \sup_{x\in\mathcal{M}_h \cap \mathcal{K}_{h,\eta}^\complement} D_n(x) 
 = O_p\left( \gamma_{n,h}^{(1+\eta)} + (\gamma_{n,h}^{(2)})^2\right),
\end{align}
which is similar to (\ref{ridgenessest2}). 
%
%
One can then use similar arguments for the Gaussian approximation (Theorem~\ref{gassianapprox}) and the corresponding extreme value distribution (Theorem~\ref{confidenceregion}) to get
\begin{align}\label{confidencut}
\mathbb{P}\left(\sqrt{nh^{d+4}}\sup_{x\in\mathcal{M}_h \cap \mathcal{K}_{h,\eta}^\complement }B_n(x) \leq b_{h}(z, c_{h}^{(d,r)}( \mathcal{C}_{n,\eta}^\complement),\zeta_n^0)\right) \rightarrow e^{-e^{-z}}.
\end{align}
Then Proposition~\ref{lambdaprop} and (\ref{confidencut}) imply that for any $\alpha\in(0,1)$, as $n\rightarrow\infty$,
\begin{align}\label{pen1}
\mathbb{P}(E_{n,1}) \rightarrow 1-\alpha.
\end{align}
%
Note that (\ref{basicrelation}) and Lemma~\ref{lambda2} imply that $\mathbb{P}(\mathcal{M}_h \cap \mathcal{K}_{h,\eta} \subset \mathcal{E}_{n,\eta}) \rightarrow 1$. Also it follows from Proposition~\ref{lambdaprop} that $\mathbb{P}(\mathcal{M}_h \cap \mathcal{K}_{h,\eta} \subset \{x\in\mathcal{H}:\; \wh\lambda_{r+1}(x)<\zeta_{n}^0\}) \rightarrow 1$. Hence $\mathbb{P}(E_{n,2}) \rightarrow 1.$ 
%
%
Combining this with (\ref{pen1}) yields
%
\begin{align}\label{upperconverge}
 \mathbb{P}\left( \mathcal{M}_h \subset [\mathcal{J}_{n,\eta} \cup \mathcal{G}^0_{n,\eta} ] \right) 
\geq & \mathbb{P}(E_{n,1}\cap E_{n,2}) \nonumber\\
= & \mathbb{P}(E_{n,1}) + \mathbb{P}(E_{n,2}) - \mathbb{P}(E_{n,1}\cup E_{n,2})  \rightarrow 1-\alpha.
 %
 %
 %
\end{align}
Denote $\mathcal{K}_{h,\eta}^* = \{x\in\mathcal{H}:\; \|\nabla f_h(x)\| \geq 2\mu_n\gamma_{n,h}^{(1)} + h^\eta\}$ and $\mathcal{J}_{n,\eta}^* = \wh C_{n,h}(b_{h}(z_\alpha, c_{h}^{(d,r)}( \mathcal{C}_{n,\eta}^*)))$. Note that 
\begin{align}\label{subsetineq}
\mathbb{P}\left( \mathcal{M}_h \subset [\mathcal{J}_{n,\eta} \cup \mathcal{G}^0_{n,\eta} ] \right)  \leq \mathbb{P}\left( (\mathcal{M}_h \cap \mathcal{K}_{h,\eta}^*)  \subset [\mathcal{J}_{n,\eta} \cup \mathcal{G}^0_{n,\eta} ] \right).
\end{align}
It follows from (\ref{basicrelation}) and Lemma~\ref{lambda2} that $\mathbb{P}(\mathcal{K}_{h,\eta}^* \cap \mathcal{G}^0_{n,\eta} = \emptyset)\rightarrow 1$. Also note that similar to (\ref{pen1}), we have $\mathbb{P}( (\mathcal{M}_h \cap \mathcal{K}_{h,\eta}^* )  \subset \mathcal{J}_{n,\eta}^* )  \rightarrow 1-\alpha.$ Since $c_{h}^{(d,r)}( \mathcal{K}_{h,\eta}^\complement) - c_{h}^{(d,r)}( \mathcal{K}_{h,\eta}^*) = o(1)$, we have $\mathbb{P}( (\mathcal{M}_h \cap \mathcal{K}_{h,\eta}^* )  \subset \mathcal{J}_{n,\eta})  \rightarrow 1-\alpha.$ Therefore the right-hand side of (\ref{subsetineq}) converges to $1-\alpha$. Combining this with (\ref{upperconverge}) we get (\ref{confregzero1}). Then (\ref{confregzero2}) in assertion (i) is a direct consequence by noticing that $\wh c_{n,l}^{(d,r)}( \mathcal{E}_{n,\eta}^\complement)$ is a consistent estimator of $c_{h}^{(d,r)}( \mathcal{K}_{h,\eta}^\complement)$. Assertions (ii) and (iii) can be proved using similar arguments as above, combined with the proof of Theorem~\ref{underconfidenceregion}.
%
%
 %
 %
\hfill$\square$
\end{proof}

%
%
%
%

\section*{Appendix B: Miscellaneous results}
In this appendix we collect some useful results used in the proof of Theorem~\ref{confidenceregion}. 
%
%
Recall $\rho_1,\rho_2,\rho_3\in\mathbb{Z}_+^d$ defined before the assumptions. 
\begin{lemma}\label{kernelratio}
Let $\alpha,\beta\in\mathbb{Z}_+^d$ with $d\geq2$ and $|\alpha|=|\beta|=3$. Under assumption (\textbf{K1}) we have \\
(i) 
\begin{align*}
\int_{\mathbb{R}^d}K^{(\alpha)}(u) K^{(\beta)}(u)du=
\begin{cases}
\int_{\mathbb{R}^d}[ K^{(\rho_q)}(u) ]^2du & \text{ if } \alpha + \beta \in \{2\gamma:\; \gamma\in\Pi(\rho_q)\}, q=1,2,3\\
0 & \text{ otherwise}
\end{cases},
\end{align*}
where $\Pi(\rho_q)$ is the set of all the permutations of the elements in $\rho_q$, $q=1,2,3$.\\
(ii)  
$a_K\geq 1$; and $b_K\leq 1$ when $d\geq3$.
\end{lemma}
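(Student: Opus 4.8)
\textbf{Proof proposal for Lemma~\ref{kernelratio}.}

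The plan is to reduce everything to integration by parts and parity (symmetry) arguments. First I would address part (i). For multi-indices $\alpha,\beta\in\mathbb{Z}_+^d$ with $|\alpha|=|\beta|=3$, write $\alpha+\beta=\gamma$, so $|\gamma|=6$. Integrating by parts three times (legitimate since $K$ has compact support in $\mathscr{B}(0,1)$ and is $C^4$, so the derivatives up to order $4$ that actually appear after moving three derivatives off one factor exist and vanish on $\partial\mathscr{B}(0,1)$), one obtains
\begin{align*}
\int_{\mathbb{R}^d}K^{(\alpha)}(u)K^{(\beta)}(u)\,du = (-1)^{3}\int_{\mathbb{R}^d}K^{(\alpha+\beta)}(u)\,K(u)\,du,
\end{align*}
and symmetrically the same quantity equals $\int K^{(\alpha+\beta+\text{whatever split})}$; the key point is that the integral depends only on the total multi-index $\gamma=\alpha+\beta$, not on how it is split. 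Next, since $K$ is spherically symmetric, $K(u)=\phi(\|u\|)$ for some $\phi$, so $K$ is even in each coordinate; consequently $K^{(\gamma)}(u)$ has definite parity in each coordinate equal to the parity of $\gamma_j$. If any component $\gamma_j$ of $\gamma=\alpha+\beta$ is odd, then $K^{(\gamma)}(u)K(u)$ is odd in $u_j$ and the integral over $\mathbb{R}^d$ vanishes. Hence the integral is nonzero only when every $\gamma_j$ is even, i.e. $\gamma=2\delta$ for some $\delta\in\mathbb{Z}_+^d$ with $|\delta|=3$. Up to permutation of coordinates (which does not change the integral, again by spherical symmetry), such a $\delta$ is one of $\rho_1=(3,0,\dots,0)$, $\rho_2=(2,1,0,\dots,0)$, or $\rho_3=(1,1,1,0,\dots,0)$ (the latter only when $d\geq3$). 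Finally, writing $\gamma=2\delta$ and splitting $2\delta=\delta+\delta$, we get $\int K^{(\alpha)}K^{(\beta)}\,du = (-1)^{3}\int K^{(2\delta)}K\,du = \int K^{(\delta)}K^{(\delta)}\,du = \int [K^{(\rho_q)}(u)]^2\,du$, which is the claimed value. This establishes (i).

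For part (ii), I would compare the three integrals $I_q:=\int_{\mathbb{R}^d}[K^{(\rho_q)}(u)]^2\,du$ directly, using the radial structure. Writing $K(u)=\phi(\|u\|)$ and differentiating, $K^{(\rho_1)}$, $K^{(\rho_2)}$, $K^{(\rho_3)}$ are expressible through $\phi,\phi',\phi'',\phi'''$ and the coordinate monomials; after taking the square and integrating, one can pass to polar coordinates and reduce to one-dimensional integrals against $\phi$ and its derivatives together with angular integrals of monomials $u_1^{2k}u_2^{2l}\cdots/\|u\|^{2m}$ over the sphere. The inequalities $a_K\ge1$ (that is, $I_{\rho_1}\ge I_{\rho_2}$) and, for $d\ge3$, $b_K\le1$ (that is, $I_{\rho_3}\le I_{\rho_2}$) should then follow from comparing these angular constants: the "more concentrated" index pattern $\rho_1$ produces a larger $L^2$ norm of the third derivative than the "spread out" patterns, and $\rho_3$ the smallest. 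Concretely, one can use that for a unit vector $\theta\in\mathbb{S}^{d-1}$, $\mathbb{E}[\theta_1^6]\ge \mathbb{E}[\theta_1^4\theta_2^2]\ge\mathbb{E}[\theta_1^2\theta_2^2\theta_3^2]$ (Cauchy--Schwarz / rearrangement on the uniform measure on the sphere), and organize the radial parts so that the same radial integral multiplies each angular constant.

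The main obstacle will be the bookkeeping in part (ii): carefully writing $K^{(\rho_q)}$ in terms of the radial profile $\phi$ and its derivatives and the direction $\theta=u/\|u\|$, then isolating the radial integral so it factors out cleanly and leaves only the angular (spherical) moments to compare. In particular the cross terms between different orders of $\phi$-derivatives must be handled so that the inequality survives termwise or after an integration-by-parts in the radial variable; getting the signs right there is the delicate point. An alternative, cleaner route I would consider is to express $I_q$ via the Fourier transform: $\int [K^{(\rho_q)}]^2 = \int |\xi^{\rho_q}|^2 |\wh K(\xi)|^2\,d\xi = \int \xi^{2\rho_q}|\wh K(\xi)|^2\,d\xi$, and since $\wh K$ is also radial, this again reduces to comparing $\int \xi_1^6 g(\|\xi\|) \,d\xi$, $\int \xi_1^4\xi_2^2 g(\|\xi\|)\,d\xi$, $\int \xi_1^2\xi_2^2\xi_3^2 g(\|\xi\|)\,d\xi$ with $g=|\wh K|^2\ge0$, whereupon the radial integral factors out automatically and only the sphere moments $\mathbb{E}[\theta_1^6]\ge\mathbb{E}[\theta_1^4\theta_2^2]\ge\mathbb{E}[\theta_1^2\theta_2^2\theta_3^2]$ remain to be verified, which is elementary.
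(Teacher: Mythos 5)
Your part (i) has a smoothness overshoot that assumption (\textbf{K1}) does not cover. Passing from $\int K^{(\alpha)}K^{(\beta)}$ to $(-1)^3\int K^{(\alpha+\beta)}K$ by three integrations by parts produces $K^{(\alpha+\beta)}$, a sixth-order derivative, while (\textbf{K1}) only guarantees $K\in C^4$; your parenthetical claim that only derivatives up to order $4$ appear is mistaken, since $|\alpha+\beta|=6$. The conclusion is still reachable under $C^4$, but not via that reduction: the parity argument should be run directly on the product $K^{(\alpha)}(u)K^{(\beta)}(u)$ (the reflection $u_i\mapsto-u_i$ multiplies it by $(-1)^{\alpha_i+\beta_i}$, so any odd coordinate of $\alpha+\beta$ kills the integral without ever forming $K^{(\alpha+\beta)}$), and when $\alpha+\beta=2\gamma$ one rebalances $(\alpha,\beta)$ to $(\gamma,\gamma)$ by at most two single integrations by parts, so neither factor's order ever exceeds $4$. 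This is precisely what the paper does.

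For part (ii), your primary radial-profile route is the one you flag as delicate and never carry out; it is your Fourier alternative that is both correct and genuinely different from the paper. The paper proves $a_K\geq1$ and $b_K\leq1$ by an AM--GM device: it introduces $\rho_4=(1,2,0,\dots)$ and $\rho_5=(1,0,2,\dots)$, notes $\int[K^{(\rho_2)}]^2=\int[K^{(\rho_4)}]^2=\int[K^{(\rho_5)}]^2$ by spherical symmetry, and then applies $\int f^2+\int g^2\geq2\int fg$ with the cross-terms evaluated by part (i), using $\rho_1+\rho_4=2\rho_2$ and $\rho_4+\rho_5=2\rho_3$. That stays within $C^4$ and needs no sphere-moment computation. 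Your Fourier route (since $K$ is radial, so is $\wh K$, hence $\int[K^{(\rho_q)}]^2$ is proportional to $\int\xi^{2\rho_q}|\wh K(\xi)|^2\,d\xi$; the common radial factor cancels and only the spherical moments remain) actually gives more: $\mathbb{E}[\theta_1^6]:\mathbb{E}[\theta_1^4\theta_2^2]:\mathbb{E}[\theta_1^2\theta_2^2\theta_3^2]=15:3:1$, so $a_K=5$ and $b_K=1/3$ for every spherically symmetric kernel, a sharper statement than the lemma's inequalities (and one that incidentally makes assumption (\textbf{K3}) automatic under (\textbf{K1})). If you commit to the Fourier route you also sidestep the smoothness problem in part (i), since $\int K^{(\alpha)}K^{(\beta)}$ is proportional to $\int\xi^{\alpha+\beta}|\wh K(\xi)|^2\,d\xi$ by Plancherel with no higher derivatives of $K$ ever formed, and both the vanishing and the identification with $\int[K^{(\rho_q)}]^2$ drop out of the radiality of $\wh K$.
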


\begin{proof}

(i) For $i=1,\cdots,d$ and $u=(u_1,\cdots,u_d)$, let $\zeta_i: \mathbb{R}^d\mapsto \mathbb{R}^d$ be a map such that $\zeta_i(u)=(u_1,\cdots,-u_i,\cdots,u_d)$. The following is a consequence of the properties for symmetric kernel functions $K$.

$$K^{(\alpha)}(u) K^{(\beta)}(u) = (-1)^{\alpha_i+\beta_i} K^{(\alpha)}(\zeta_i(u)) K^{(\beta)}(\zeta_i(u)),\; i=1,\cdots,d.$$
Therefore if any index in $\alpha+\beta$ is odd, then $\int_{\mathbb{R}^d}K^{(\alpha)}(u) K^{(\beta)}(u)du=0$. Now assume that $\alpha+\beta=2\gamma$ for some $\gamma\in\Pi(\rho_q)$, $q=1,\cdots,d$. Using integration by parts (for twice or none, depending on $\alpha$ and $\beta$), we obtain that $\int_{\mathbb{R}^d}[ K^{(\rho_q)}(u) ]^2du = \int_{\mathbb{R}^d}[ K^{(\gamma)}(u) ]^2du$. Then the result in (i) follows from the fact that $\int_{\mathbb{R}^d}[ K^{(\gamma)}(u) ]^2du=\int_{\mathbb{R}^d}[ K^{(\rho_q)}(u) ]^2du$, for all $\gamma\in\Pi(\rho_q)$, $q=1,\cdots,d$, again due to the spherical symmetry of $K$.

(ii) Let $\rho_4=(1,2,0,\cdots,0)^T\in\mathbb{Z}_+^d$ and $\rho_5=(1,0,2,\cdots,0)^T\in\mathbb{Z}_+^d$, where $\rho_5$ is only defined if $d\geq3$. Notice that 
\begin{align*}
\int_{\mathbb{R}^{d}}[K^{(\rho_2)}(s)]^2ds = \int_{\mathbb{R}^{d}}[K^{(\rho_4)}(s)]^2ds =\int_{\mathbb{R}^{d}}[K^{(\rho_5)}(s)]^2ds.
\end{align*}
We have $a_K\geq 1$ and $b_K\leq 1$ because 
\begin{align*}
\int_{\mathbb{R}^{d}}[K^{(\rho_1)}(s)]^2ds + \int_{\mathbb{R}^{d}}[K^{(\rho_4)}(s)]^2ds &\geq 2\int_{\mathbb{R}^{d}}K^{(\rho_1)}(s)K^{(\rho_4)}(s)ds
=2\int_{\mathbb{R}^{d}}[K^{(\rho_2)}(s)]^2ds,
\end{align*}
%
and
\begin{align*}
\int_{\mathbb{R}^{d}}[K^{(\rho_4)}(s)]^2ds + \int_{\mathbb{R}^{d}}[K^{(\rho_5)}(s)]^2ds &\geq 2\int_{\mathbb{R}^{d}}K^{(\rho_4)}(s)K^{(\rho_5)}(s)ds
=2\int_{\mathbb{R}^{d}}[K^{(\rho_3)}(s)]^2ds.
\end{align*}

\hfill$\square$
\end{proof}

Below we give an expression of $P$ in (\ref{Pmatrix}) when $d=3$ as an example.  Let $P_1$, $P_2$ and $P_3$ be the columns of $P$. Then
\begin{align*}
P_1 = &\begin{pmatrix}
a_Kt_1^2 + t_2^2+t_3^2 + 2t_1t_2+2t_1t_3 + 2b_Kt_2t_3 + t_4^2+t_5^2+b_Kt_6^2\\
2t_1t_4+2t_2t_4+2b_Kt_3t_4+2b_Kt_5t_6\\
2t_1t_5+2b_Kt_2t_5+2t_3t_5+2b_Kt_4t_6
\end{pmatrix},\\
P_2 = &\begin{pmatrix}
2t_1t_4+2t_2t_4+2b_Kt_3t_4+2b_Kt_5t_6\\
t_1^2 + a_Kt_2^2+t_3^2 + 2t_1t_2+2b_Kt_1t_3 + 2t_2t_3 + t_4^2+b_Kt_5^2+t_6^2\\
2b_Kt_1t_6+2t_2t_6+2t_3t_6+2b_Kt_4t_5
\end{pmatrix},\\
P_3 = &\begin{pmatrix}
 2t_1t_5 + 2b_Kt_2t_5+2t_3t_5+2b_Kt_4t_5\\
 2b_Kt_1t_6+2t_2t_6+2t_3t_6+2b_Kt_4t_5\\
  t_1^2 + t_2^2+a_Kt_3^2 + 2b_Kt_1t_2+2t_1t_3 + 2t_2t_3 + b_Kt_4^2+t_5^2+t_6^2
  \end{pmatrix}.
\end{align*}
We continue to use $d=3$ as an example to give the explicit expression of $L$ and $S$ in (\ref{PmatrixDecomp}). We can write $S=(a_K-1/b_K)\text{diag}(t_1^2, t_2^2, t_3^2)$
and $L = (L_1,L_2,L_3)$, where
\begin{align*}
L_{1} & = \sqrt{b_K}\begin{pmatrix}
t_1/b_K + t_2+t_3 & t_4 &t_5\\
t_4 & t_2/b_K+t_1+t_3 & t_6\\
t_5 & t_6 & t_3/b_K+t_1+t_2
\end{pmatrix},\\
L_{2} &= \sqrt{b_K}\begin{pmatrix}
t_6\\t_5\\t_4
\end{pmatrix},\;
L_{3} = \sqrt{1-b_K}\begin{pmatrix}
t_4 & t_2 & t_5 & t_3 & 0 & 0\\
t_1 & t_4 & 0 & 0 & t_6 & t_3\\
0 & 0 & t_1 & t_5 & t_2 & t_6
\end{pmatrix}.
\end{align*}

We need the following definition and probability result which are proved in our companion work Qiao (2019b). Suppose that $n_1$ and $n_2$ are positive integers and $0<\alpha_1,\alpha_2\leq 2$.

\begin{definition}[Local equi-$(\alpha_1, D^h_{t,v}, \alpha_2, B_{t,v})$-stationarity] \label{def-loc-stat} $\;$Let $\{Z_h(t,v), (t,v)\in \mathcal {S}_1\times \mathcal {S}_2, h\in \mathbb{H}\}$ be a class of non-homogeneous random fields, where $\mathbb{H}$ is an index set, $\mathcal {S}_1 \subset \mathbb{R}^{n_1}$ and $\mathcal {S}_1 \subset \mathbb{R}^{n_2}$. We say that this class is locally equi-$(\alpha_1, D^h_{t,v}, \alpha_2, B_{t,v})$-stationary, if the following three conditions hold. For any $t\in \mathcal {S}_1$,  $v\in \mathcal {S}_2$ and $h \in {\mathbb H},$ there exist non-degenerate matrices $D_{t,v}^h$ and $B_{t,v}$ such that 

\begin{alignat}{2}
&(i) \qquad\frac{ [1-r_h(t_1,t_2,v_1,v_2)]}{\|h^{-1}D_{t,v}^h(t_1-t_2)\|^{\alpha_1} + \|B_{t,v}(v_1-v_2)\|^{\alpha_2} }\rightarrow1\quad  \label{CC} \\
&\text{\rm as }\,\frac{\max\{\|t_1-t\|,\|t_2-t\|\}}{h}\rightarrow0 \text { \rm and } \max\{\|v_1-v\|,\|v_2-v\|\} \rightarrow0, \text{ uniformly in } h\in \mathbb{H}, \nonumber\\
&s\in\mathcal {S}_1, \text{ and } u\in\mathcal {S}_2, \text{ and}\nonumber\\
%
&(ii) \qquad 0< \inf\limits_{\substack{h\in \mathbb{H}, (t,v)\in \mathcal{S}_1\times \mathcal{S}_2\\ u\in \mathbb{R}^{n_1}\backslash \{0\}}} \frac{\|D_{t,v}^h\;u\|}{\|u\|}\leq \sup\limits_{\substack{h\in \mathbb{H}, (t,v)\in \mathcal{S}_1\times \mathcal{S}_2 \\ u\in \mathbb{R}^{n_1}\backslash \{0\}}} \frac{\|D_{t,v}^h\;u\|}{\|u\|} <\infty.\label{CCprime}\\
&(iii) \qquad 0< \inf\limits_{\substack{(t,v)\in \mathcal{S}_1\times \mathcal{S}_2\\ u\in \mathbb{R}^{n_2}\backslash \{0\}}} \frac{\|B_{t,v}\;u\|}{\|u\|}\leq \sup\limits_{\substack{(t,v)\in \mathcal{S}_1\times \mathcal{S}_2 \\ u\in \mathbb{R}^{n_2}\backslash \{0\}}} \frac{\|B_{t,v}\;u\|}{\|u\|} <\infty.\label{CCprime2}
\end{alignat}
\end{definition}

We consider $1\leq r_1<n_2$ and $1\leq r_2<n_2$ below. Let $H_{\alpha_i}^{(r_i)}$, $i=1,2$ denote the generalized Pickands' constant of Gaussian fields (see the appendix of Qiao, 2019b). Recall that $\Delta(\mathcal{L})$ is the reach of a manifold $\mathcal{L}$. 
\begin{theorem}\label{ProbMain}
Let $\mathcal{H}\subset\mathbb{R}^{n_1}$ be a compact set and and $\mathcal{L}\subset\mathbb{R}^{n_2}$ be an $r_2$-dimensional compact Riemannian manifold with $\Delta(\mathcal{L})>0$. For fixed $h_0$ with $0<h_0<1$, let $\{Z_h(t,v), (t,v)\in \mathcal{H}\times \mathcal{L}, 0<h\leq h_0\}$ be a class of Gaussian centered locally equi-$(\alpha_1, D_{t,v}^h, \alpha_2, B_{t,v})$-stationary fields with $0<\alpha_1,\alpha_2\leq2,$ and all components of $D_{t,v}^h$ continuous in $h$, $t$ and $v$.  Suppose that $D_{t,v}^h$, $(t,v)\in \mathcal{H}\times\mathcal{L}$, uniformly converges, as $h\downarrow0$, to a matrix field $D_{t,v}$, $(t,v) \in {\mathcal H}\times\mathcal{L},$ with continuous components. Let $\mathcal {M}_{h}\subset\mathcal{H}$ be $r_1$-dimensional compact Riemannian manifolds with $\inf_{0<h\leq h_0}\Delta(\mathcal{M}_{h})>0$, and $\sup_{0<h\leq h_0}\mathscr{H}_{r_1}(\mathcal{M}_{h})<\infty$. For $x>0,$ let 
\begin{align}\label{QDelta}
Q(x)=\sup_{0<h\leq h_0}\{|r_h(t+s,s,v+u,u)|: (t+s,v+u), (s,u)\in\mathcal{M}_{h}\times\mathcal{L}, \|t\|>hx\},
\end{align}
where $r_h$ denotes the covariance function of $Z_h(t,v)$. Suppose that, for any $x>0$, there exists $\eta > 0$ such that
\begin{align}\label{SupGauss1}
Q(x) < \eta<1.
\end{align}
Furthermore, let  $x_0> 0$ be such that for a function $v(\cdot)$ and for $x>x_0,$ we have

\begin{align}\label{SupGauss2}
Q(x)\Big|(\log x)^{2r/\alpha_1}\Big|\leq v(x),
\end{align}
where $v$ is a monotonically decreasing, such that, for some $p > 0,$ $v(x^p)=O(v(x))=o(1)$ and $v(x) x^{p}\to \infty$ as $x\rightarrow\infty$. Let
\begin{align}\label{BetaExp}
\beta_h=&\Big(2r_1\log\frac{1}{h} \Big)^{\frac{1}{2}}  +\Big(2r_1\log\frac{1}{h}\Big)^{-\frac{1}{2}} \nonumber\\
&\hspace{1cm}\times\bigg[\Big(\frac{r_1}{\alpha_1}+\frac{r_2}{\alpha_2}-\frac{1}{2}\Big)\log{\log\frac{1}{h} } +\log\bigg\{\frac{(2r_1)^{\frac{r_1}{\alpha_1}+\frac{r_2}{\alpha_2}-\frac{1}{2}}}{\sqrt{2\pi}}H_{\alpha_1}^{(r_1)}H_{\alpha_2}^{(r_2)}I(\mathcal {M}_h\times\mathcal{L})\bigg\}\bigg],
\end{align}
where $I(\mathcal {M}_h\times\mathcal{L})=\int_{\mathcal{L}}\int_{\mathcal {M}_{h}}\|D_{s,u} M_s\|_{r_1} \| B_{s,u} M_u\|_{r_2}d\mathscr{H}_{r_1}(s)d\mathscr{H}_{r_2}(u)$ with $M_s$ an $n_1\times r_1$ matrix with orthonormal columns spanning $T_s\mathcal {M}_{h},$ and $M_u$ an $n_2\times r_2$ matrix with orthonormal columns spanning $T_u\mathcal {L}.$ Then
\begin{align}\label{ConclRes}
\lim_{h\rightarrow0}\mathbb{P}\left\{\sqrt{2r_1\log\tfrac{1}{h} } \left(\sup_{v\in\mathcal{L}}\sup_{t\in\mathcal {M}_{h}}Z_h(t,v)-\beta_h \right)\leq z\right\}=\exp\{-2\exp\{-z\}\}.
\end{align}
\end{theorem}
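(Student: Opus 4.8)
The plan is to establish Theorem~\ref{ProbMain} by the Pickands--Piterbarg double-sum (Poisson-clumping) method, arranged so that every estimate is uniform in $h\in(0,h_0]$ and respects the product index set $\mathcal{M}_h\times\mathcal{L}$. Write $u_h=\beta_h+z(2r_1\log\tfrac1h)^{-1/2}$; since the event in (\ref{ConclRes}) is $\{\sup Z_h\le u_h\}$, it suffices to prove $\mathbb{P}\big(\sup_{(t,v)\in\mathcal{M}_h\times\mathcal{L}}Z_h(t,v)\le u_h\big)\to e^{-\lambda}$ with $\lambda=2e^{-z}$. The strategy is to cut $\mathcal{M}_h\times\mathcal{L}$ into small mesh ``boxes'', to show that the number of boxes carrying an exceedance of $u_h$ is asymptotically Poisson with mean $\Lambda_h$, and to check that the calibration (\ref{BetaExp}) of $\beta_h$ is exactly the one making $\Lambda_h\to\lambda$.

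\textbf{Localization and the local Pickands lemma.} Using $\inf_{0<h\le h_0}\Delta(\mathcal{M}_h)>0$, $\sup_h\mathscr{H}_{r_1}(\mathcal{M}_h)<\infty$ and $\Delta(\mathcal{L})>0$, one covers $\mathcal{M}_h$ and $\mathcal{L}$ by finitely many (uniformly in $h$) coordinate charts of uniformly bounded geometry; in each chart the push-forward of $Z_h$ is a centred Gaussian field on a bounded subset of $\mathbb{R}^{r_1}\times\mathbb{R}^{r_2}$ whose local correlation, by Definition~\ref{def-loc-stat}, expands as $1-r_h\sim\|h^{-1}\widetilde D(\Delta t)\|^{\alpha_1}+\|\widetilde B(\Delta v)\|^{\alpha_2}$, where $\widetilde D=D^h_{s,u}M_s$ and $\widetilde B=B_{s,u}M_u$ absorb the chart Jacobians and the orthonormal tangent frames $M_s$, $M_u$. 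A partition of unity plus Bonferroni reduces the problem to one chart. Rescaling $t\mapsto h\,u_h^{-2/\alpha_1}t$, $v\mapsto u_h^{-2/\alpha_2}v$ and tiling the rescaled domain by translates of $[0,T]^{r_1}\times[0,T]^{r_2}$ for a large fixed $T$, condition (\ref{CC}) allows a Slepian comparison of $Z_h$ on each box with the sum of two independent stationary fractional-Brownian-type fields of indices $\alpha_1,\alpha_2$; a Pickands-type lemma then gives, uniformly over boxes and $h$, $\mathbb{P}(\sup_{\mathrm{box}}Z_h>u_h)=T^{-r_1}H^{(r_1)}_{\alpha_1}(T)\,T^{-r_2}H^{(r_2)}_{\alpha_2}(T)\,(\text{metric volume of the box})\,\Psi(u_h)(1+o(1))$, where $\Psi$ is the standard normal tail and $H^{(r)}_\alpha(T)/T^{r}\to H^{(r)}_\alpha$. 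The minors of $\widetilde D$, $\widetilde B$ are exactly what turn the sum of the box metric volumes into $I(\mathcal{M}_h\times\mathcal{L})$ after the change of variables (using the uniform convergence $D^h_{s,u}\to D_{s,u}$), and the strips between boxes are made negligible, as $T\to\infty$, by the Borell--TIS inequality.

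\textbf{Calibration and the Poisson limit.} Summing the single-box estimate over all boxes in all charts gives
\[
\Lambda_h(T)=T^{-r_1}H^{(r_1)}_{\alpha_1}(T)\,T^{-r_2}H^{(r_2)}_{\alpha_2}(T)\,h^{-r_1}u_h^{2r_1/\alpha_1+2r_2/\alpha_2}\,I(\mathcal{M}_h\times\mathcal{L})\,\Psi(u_h)\,(1+o(1)).
\]
Substituting $\Psi(u_h)\sim(\sqrt{2\pi}\,u_h)^{-1}e^{-u_h^2/2}$ and the expansion of $u_h^2$ forced by the definition (\ref{BetaExp}) of $\beta_h$ (the $\log\log\tfrac1h$ term, the $\log\{(2r_1)^{r_1/\alpha_1+r_2/\alpha_2-1/2}(2\pi)^{-1/2}H^{(r_1)}_{\alpha_1}H^{(r_2)}_{\alpha_2}I(\mathcal{M}_h\times\mathcal{L})\}$ term, and the $2z$ term are precisely those needed to cancel the polynomial, constant, and geometric factors), one checks $\lim_{h\to0}\Lambda_h(T)=\big(T^{-r_1}H^{(r_1)}_{\alpha_1}(T)\big)\big(T^{-r_2}H^{(r_2)}_{\alpha_2}(T)\big)\big(H^{(r_1)}_{\alpha_1}H^{(r_2)}_{\alpha_2}\big)^{-1}\lambda$, which tends to $\lambda=2e^{-z}$ as $T\to\infty$. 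Then, by Bonferroni and a standard Poisson (Chen--Stein) comparison, $|\mathbb{P}(\sup Z_h\le u_h)-e^{-\Lambda_h(T)}|\le\sum_{\mathrm{boxes}}\mathbb{P}(\text{box exceeds})^2+\sum_{\mathrm{pairs}}\mathbb{P}(\text{both boxes exceed})$; the first sum is $o(1)$ since $\max\mathbb{P}(\text{box exceeds})\to0$ while $\Lambda_h(T)$ stays bounded. Letting $h\to0$ and then $T\to\infty$ yields (\ref{ConclRes}), provided the double sum is negligible.

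\textbf{Main obstacle: negligibility of the double sum.} The heart of the argument is $\sum_{\mathrm{pairs}}\mathbb{P}(\text{both boxes exceed }u_h)\to0$ in the iterated limit. I would classify pairs by separation: (a) neighbouring boxes, where the classical Pickands overlap estimate shows the contribution divided by the box count is $o(1)$ as $T\to\infty$ --- this is exactly the subadditivity that yields $H^{(r)}_\alpha(T)/T^{r}\to H^{(r)}_\alpha$; (b) boxes at intermediate separation in the same chart, controlled by the local covariance expansion of Definition~\ref{def-loc-stat} together with a bivariate Gaussian tail bound; (c) well-separated boxes, i.e.\ separation exceeding a fixed multiple of $h$ in the $t$-coordinate or of $1$ in the $v$-coordinate, where assumption (\ref{SupGauss1}) gives $|r_h|\le\eta<1$, hence $\mathbb{P}(\text{both exceed})\lesssim\Psi(u_h)^{2/(1+\eta)}$, while assumption (\ref{SupGauss2}) with its monotone majorant $v(\cdot)$ handles the transition between the ``far'' and ``intermediate'' regimes and keeps (number of such pairs)$\times$(bound)$\,\to0$. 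Making all of (a)--(c) uniform in $h$ despite the moving manifolds $\mathcal{M}_h$ and the asymmetric scaling ($h$-scale in $t$, fixed scale in $v$) is the principal technical difficulty; once it is in place, the Poisson limit of the previous step closes the proof.
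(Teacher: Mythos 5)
There is no in-paper proof of Theorem~\ref{ProbMain} to compare against: the statement appears in Appendix~B with the explicit remark that it is ``proved in our companion work Qiao (2019b),'' and the paper cites it rather than proves it. Your proposal is therefore being assessed on its own plausibility as a proof of the stated result, not against a reference proof.

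Your architecture is the standard Pickands--Piterbarg double-sum scheme, and it is very likely the skeleton of the companion paper's argument: localize to finitely many charts of uniformly bounded geometry using the uniform reach and volume bounds on $\mathcal{M}_h$ and $\mathcal{L}$; rescale by $h\,u_h^{-2/\alpha_1}$ in $t$ and by $u_h^{-2/\alpha_2}$ in $v$; tile by $T$-boxes; prove a single-box Pickands lemma via Slepian comparison with a fractional-Brownian reference field; sum to get the intensity $\Lambda_h(T)$; then close by Bonferroni/Chen--Stein with the double sum as remainder, controlled in the short range by subadditivity of $H_\alpha^{(r)}(\cdot)$, in the mid range by the local expansion (\ref{CC}), and in the long range by (\ref{SupGauss1})--(\ref{SupGauss2}). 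That ordering of tools is right, and your three-regime splitting of the double sum is sensible.

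The concrete gap is the calibration constant. Plugging $u_h^2 = 2r_1 L + 2(a\log L + b + z) + o(1)$, with $L=\log(1/h)$, $a = r_1/\alpha_1 + r_2/\alpha_2 - 1/2$, and $b$ the bracketed constant in (\ref{BetaExp}), into $\Lambda_h = H_{\alpha_1}^{(r_1)} H_{\alpha_2}^{(r_2)}\, h^{-r_1}\, u_h^{2r_1/\alpha_1 + 2r_2/\alpha_2}\, I(\mathcal{M}_h\times\mathcal{L})\, \Psi(u_h)$ gives $\Lambda_h \to e^{-z}$, not $2e^{-z}$. But (\ref{ConclRes}) asserts $\exp\{-2\exp\{-z\}\}$, so your bookkeeping does not produce the factor $2$. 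A generic exceedance count for a one-sided supremum over a compact boundaryless manifold $\mathcal{L}$ does not yield that $2$; it is a structural feature of the companion paper's setting. In the application here $\mathcal{L}=\mathbb{S}^{d-r-1}$ and the field has the antipodal antisymmetry $Z_h(t,-v)=-Z_h(t,v)$, so that $\sup_{v}\sup_t Z_h(t,v)$ is a $\chi$-type extreme; for $d-r=1$ it is literally $\sup_t|Z_h(t,1)|$, where the $2$ is the Bickel--Rosenblatt doubling from the two excursion signs. Your sketch neither isolates the hypothesis under which that factor appears nor shows where it enters the box count, so as written it would prove the $e^{-e^{-z}}$ version, not the stated theorem. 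Note also that the paper's own invocation of Theorem~\ref{ProbMain} in the derivation of (\ref{normalizedextreme}) reports an $e^{-e^{-z}}$ limit with $\beta_h$ built from $c_h^{(d,r)}$, which a direct substitution shows to coincide with the $b$ in (\ref{BetaExp}); so there is a latent factor-of-$2$ tension in the paper itself that the companion work presumably resolves, and any serious proof attempt must track exactly how and where it enters.
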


\section*{References}
\begin{description}
\itemsep0em 
\item Arias-Castro, E., Mason, D. and Pelletier, B. (2016). On the estimation of the gradient lines of a density and the consistency of the mean-shift algorithm. {\em Journal of Machine Learning} {\bf 17} 1-28.
 \item Chazal, F., Lieutier, A., and Rossignac, J. (2007). Normal-map between normal-compatible manifolds. {\em International Journal of Computational Geometry \& Applications} {\bf 17} 403-421.
\item Chernozhukov, V., Chetverikov, D. and Kato, K. (2014). Gaussian approximation of suprema of empirical processes. {\em Ann. Statist.} {\bf 42}, 1564-1597.
\item Cuevas, A., Gonz\'{a}lez-Manteiga, W., and  Rodr\'{i}guez-Casal, A. (2006). Plug-in estimation of general level sets. {\em Australian \& New Zealand Journal of Statistics} {\bf 48} 7-19.
\item Dunajeva, O. (2004). The second-order derivatives of matrices of eigenvalues and eigenvectors with an application to generalized F-statistic. {\em Linear Algebra and its Applications} {\bf 388} 159-171.
\item Einmahl, U. and Mason, D.M. (2005). Uniform in bandwidth consistency of kernel-type function estimators. {\em Ann. Statist.} {\bf 33} 1380-1403.
\item Federer, H. (1959). Curvature measures. {\em Trans. Amer. Math. Soc.} {\bf 93} 418-491.
\item Gin\'{e}, E. and Guillou, A. (2002). Rates of strong uniform consistency for multivariate kernel density estimators. {\em Ann. Inst. H. Poincar\'{e} Probab. Statist.} {\bf 38} 907-921.
\item Higham, N. (2008). {\em Functions of Matrices - Theory and Computation}. SIAM, Philadelphia.
\item Ipsen, I.C.F. and Rehman, R. (2008). Perturbation bounds for determinants and characteristic polynomials. {\em SIAM J. Matrix Anal. Appl.} {\bf 30} 762-776.
\item Qiao, W. (2019a). Nonparametric estimation of surface integrals on density level sets. {\em arXiv: 1804.03601}.
\item Qiao, W. (2019b). Extremes of locally stationary chi-fields on manifolds, {\em preprint}.
\item Serre, D. (2002). {\em Matrices: Theory and Applications}. Springer-Verlag, New York.
\item van der Vaart, A.W. and Wellner, J.A. (1996). {\em Weak Convergence and Empirical Processes: With Applications to Statistics}. Springer, New York.
\end{description}
\end{document}